\theoremstyle{plain}
\newtheorem{theorem}{Theorem}[section]
\newtheorem{lemma}[theorem]{Lemma}
\newtheorem{corollary}[theorem]{Corollary}
\newtheorem*{theorem*}{Theorem}
\newtheorem*{lemma*}{Lemma}
\theoremstyle{definition}
\newtheorem{definition}[theorem]{Definition}
\newtheorem{remark}[theorem]{Remark}
\newcommand{\Lip}{ {\rm Lip} }
\newcommand{\Adj}{ \operatorname {Adj} }
\newcommand{\tr}{ \operatorname {tr}  }
\newcommand{\loc}{ {\rm loc} }
\newcommand{\R}{ \mathbb{R} }
\newcommand{\M}{ \mathbb{M} }
\newcommand{\A}{ \mathcal{A} }
\newcommand{\Hom}{ \mathcal{H} }
\newcommand{\K}{ \mathcal{K} }
\newcommand{\N}{ \mathcal{N} }
\def\?{
?\vadjust{\vbox to 0pt{\vss\hbox{\kern\hsize\kern1em\large\bf ?!}}}}
\title{
Injectivity almost everywhere 
and
mappings with finite distortion
in nonlinear elasticity 
\thanks{This work was partially supported by a
	Grant of the Russian Foundation of Basic Research 
	(Project~17-01-00801-a) 
	and
	the Russian Science Foundation
	(Agreement  No.~16-41-02004).}
}
\author{A.~O.~Molchanova \\
	\normalsize\textit{Sobolev Institute of Mathematics, People's Friendship University,} \\
	\normalsize \textit{\href{mailto:a.molchanova@math.nsc.ru}{a.molchanova@math.nsc.ru}}
	\and
	S.~K.~Vodop$'$yanov \\
	\normalsize\textit{Sobolev Institute of Mathematics, People's Friendship University,} \\
	\normalsize \textit{\href{mailto:vodopis@math.nsc.ru}{vodopis@math.nsc.ru}}
}
\date{\empty}
\begin{document}

\maketitle

\section*{Introduction}\label{sec:intro}
Some problems in nonlinear elasticity
(including, for instance,
those involving 
hyperelastic materials)
reduce to 
that of
minimizing the total energy functional. 
In this situation,
and
in contrast to the case of linear elasticity,
the integrand is almost always nonconvex,
while the functional is nonquadratic.
This renders the standard variational methods inapplicable.
Nevertheless,
for a~sufficiently large class of applied nonlinear problems,
we may replace convexity with certain weaker conditions,
i.e.\ 
polyconvexity \cite{Ball1977}.

Denote by
$\M^{m\times n}$ 
the set of
$m\times n$
matrices.
Recall that 
a function 
$W\colon \Omega \times \M^{3\times 3} \to \R$,
$\Omega \subset \R^3$, 
is called \textit{polyconvex} if
there exists a~convex function
${G(x,\cdot)\colon\M^{3\times 3}\times\M^{3 \times 3} \times\R_{+} \to \R}$
such that
\begin{equation*}
	G(x, F, \Adj F, \det F)=W(x,F)
	\text{ for all } F\in \M^{3\times 3} \text{ with } \det F > 0,
\end{equation*}
almost everywhere (henceforth abbreviated as a.e.) in~%
$\Omega$.

Let 
$\Omega$
be a bounded domain in 
$\R^3$
which boundary
$\partial \Omega$
satisfies the Lipschitz condition.
Ball's method~\cite{Ball1977}
is to consider a~sequence
$\{\varphi_k\}_{k\in\mathbb{N}}$
minimizing the total energy functional
\begin{equation}\label{def:energy_func}
	I(\varphi)=\int\limits_{\Omega}W(x,D\varphi)\,dx.
\end{equation}
over the set of admissible deformations
\begin{equation}\label{def:AB}
	\A_B=\{\varphi\in W^1_1(\Omega), \: I(\varphi) < \infty, \:
	J(x,\varphi) > 0 
	\text{ a.e.~in } \Omega, \:
	\varphi|_{\partial \Omega}=\overline{\varphi}|_{\partial \Omega}\},
\end{equation}
where
$\overline{\varphi}$
are Dirichlet boundary conditions
and 
$J(x,\varphi)$ 
stands for the Jacobian of
$\varphi$,
$J(x,\varphi) = \det D\varphi (x)$.
Furthermore, it is
assumed that
the coercivity inequality
\begin{equation}\label{neq:coer_b}
	W(x,F)\geq \alpha (|F|^p+|\Adj F|^{q}+ (\det F)^r) + g(x)
\end{equation}
holds for almost all
$x\in \Omega$
and all
$F\in \M^{3\times 3}$,
$\det F > 0$,
where
$p\geq 2$,
$q\geq \frac{p}{p-1}$,
$r>1$
and 
$g\in L_1(\Omega)$,
$\Adj F$
denotes the adjoint matrix, i.e.\ a~transposed matrix of 
$(2 \times 2)$-subdeterminants
of 
$F$.  
Moreover,
the stored-energy function~%
$W$
is polyconvex.
By coercivity,
it follows that
the sequence
$(\varphi_k,\Adj D\varphi_k, \det D\varphi_k)$
is bounded in the reflexive Banach space
$W^1_p(\Omega)\times L_{q}(\Omega)\times L_r(\Omega)$.
Relying on the relation between 
$p$
and 
$q$, 
one can conclude that 
there exists a~subsequence converging weakly to an~element
$(\varphi_0,\Adj D\varphi_0, \det D\varphi_0)$.
For the limit
$\varphi_0$
to belong to the class
$\A_B$
of admissible deformations,
we need to impose the additional condition: 
\begin{equation} \label{cond:barrier}
	W(x,F)\rightarrow\infty
	\text{ as }
	\det F \rightarrow 0_+
\end{equation}
(see~\cite{BallCurOl1981} for more details). 
This condition is quite reasonable 
since it fits in with the principle that
{\it ``extreme stress must accompany extreme strains''.}
Another important property of this approach
is the sequentially weakly lower semicontinuity
of the total energy functional,
\begin{equation*}
	I(\varphi)\leq \varliminf\limits_{k\rightarrow\infty} I(\varphi_k),
\end{equation*}
which holds
because the stored-energy function is polyconvex.
It is also worth noting that
Ball's approach admits the {\it nonuniqueness of solutions}
observed experimentally
(see~\cite{Ball1977} for more details).

One of the most important requirements of continuum mechanics is 
that interpenetration of matter does not occur,
from which it follows that any deformation has to be injective.
Global injectivity of deformations has been established by J.~Ball
\cite{Ball1981} 
within the existence theory based on minimization of the energy 
\cite{Ball1977}.
More precisely,
if 
$\varphi \colon \overline\Omega \to \mathbb{R}^n$,
$\Omega \subset \R^n$,
is a mapping in
$W^{1}_{p}(\Omega)$,
$p > n$, 
coinciding on the boundary 
$\partial \Omega$
with a homeomorphism
$\overline\varphi$
and
$J(x, \varphi) > 0$ 
a.e.\ in 
$\Omega$,
$\overline\varphi(\Omega)$
is Lipschitz,
and if for some 
$\sigma > n$
\begin{equation}\label{eq:ball_inj}
	\int\limits_\Omega |(D \varphi(x))^{-1}|^\sigma J(x, \varphi) \, dx =
	\int\limits_\Omega \frac{|\Adj D \varphi(x)|^\sigma} {J(x, \varphi)^{\sigma-1}} \, dx
	< \infty,
\end{equation}
then 
$\varphi$ 
is a homeomorphism of 
$\Omega$
on 
$\varphi(\Omega)$
and
$\varphi^{-1}\in W^1_\sigma(\overline\varphi(\Omega))$.

To apply this result to nonlinear elasticity
it is required that some
additional conditions on 
the stored-energy function
be imposed in order to
obtain 
invertibility of deformations.
Thus,
in~\cite{Ball1981}
(see also~\cite[Exercise~7.13]{Ciar1988}),
it is considered a~domain
$\Omega \subset \R^3$
with a Lipschitz boundary
$\partial \Omega$
and a~polyconvex stored-energy function
$W$.
Suppose that
there exist constants
$\alpha>0$,
$p>3$,
$q > 3$,
$r>1$,
and
$m> \frac{2q}{q-3}$,
as well as a~function
$g\in L_1(\Omega)$
such that
\begin{equation}\label{neq:qoerB}
	W(x,F)\geq \alpha (|F|^p + |\Adj F|^{q}+ (\det F)^r + (\det F)^{-m}) + g(x)
\end{equation}
for almost all
$x\in \Omega$
and all
$F\in \M^{3\times 3}$,
$\det F > 0$.
Take a~homeomorphism
$\overline{\varphi}\colon \overline{\Omega} \to \overline{\Omega'}$
in
$W^1_p(\Omega)$
with
$J(x,\overline\varphi) > 0$
a.e.\ in 
$\Omega$.
Then there exists a~mapping
$\varphi\colon \Omega \to \Omega'$
minimizing the total energy functional~\eqref{def:energy_func} 
over the set of admissible deformations~\eqref{def:AB},
which is a~homeomorphism due to~\eqref{eq:ball_inj} with 
$\varphi^{-1}\in W^1_\sigma(\overline\Omega)$,
$\sigma = \frac{q(1+m)}{q+m} > 3$. 

In this article we obtain the injectivity property
(Theorem~\ref{thm:ae_injectivity})
based on the boundedness of the composition operator
$\varphi^*\colon L^1_p(\Omega')\to L^1_q(\Omega)$.
Boundedness of these operators is intimately related to a condition of finite distortion.
Recall that 
a
$W^1_{1, \loc}$-mapping
$f\colon\Omega\to \R^n$
with nonnegative Jacobian,
$J(x,f) \geq 0$
a.e.,
is called a~{\it mapping with finite distortion}
if
$|Df(x)|^n \leq K(x) J(x,f)$ 
for almost all 
$x\in \Omega$,
where
$1 \leq K(x) < \infty$
a.e.\ in~%
$\Omega$.
A~function
\begin{equation*}\label{def:outer_distortion}
	K_O (x,\varphi) = \frac{|D\varphi(x)|^n}{J(x,\varphi)}
\end{equation*}
is called the \textit{outer distortion coefficient}%
\footnote{
	It is assumed that
	$K_O (x,\varphi) = 1$ 
	if
	$J(x,\varphi) = 0$.
}.
It is worth noting that
mappings with finite distortion arise in  nonlinear elasticity from geometric considerations:
it would be desirable that the deformation is continuous, maps sets of measure zero to sets of measure zero,
is a one-to-one mapping and that the inverse map has ``good'' properties.
Hence, many research groups all over the word have worked on this issue
(see 
\cite{AstIwaMarOnn2005,BenKru2016,BenKam2015,HeiKos1993,HenKos2005,HenKos2006,HenKos2014,HenMal2002,IwaOnn2009,IwaOnn2010,IwaOnn2011,IwaOnn2012,KosMal2003,ManVill1998,MarMal1995,Onn2006,VodMol2015} 
and a lot more).
It is known that in the planar case 
($\Omega$,
$\Omega'\subset \R^2$) 
a homeomorphism 
$\varphi \in W^{1}_{1,\loc}(\Omega)$ 
has an inverse homeomorphism
$\varphi^{-1} \in W^{1}_{1,\loc}(\Omega')$
if and only if 
$\varphi$
is a mapping with finite distortion
\cite{HenKos2006,HenKos2014}.
In the spatial case
$W^1_{n,\loc}$-regularity 
of the inverse mapping
was shown
for 
$W^1_{q,\loc}$-homeomorphism, 
$q > n-1$,
with the integrable \textit{inner distortion}%
\footnote{
	Here
	$K_I (x,\varphi) = 1$ 
	if
	$|\Adj D\varphi (x)| = 0$,
	and
	$K_I (x,\varphi) = \infty$ 
	if
	$|\Adj D\varphi (x)| \neq 0$
	and
	$J(x,\varphi) = 0$.
	}
\begin{equation*}
	K_I(x,\varphi) = \frac{|\Adj D \varphi (x)|^n}{J(x,\varphi)^{n-1}}.
\end{equation*}
Moreover, the relaxation of \eqref{eq:ball_inj}
on the case 
$\sigma = n$,
\begin{equation*}
	\int\limits_{\Omega'} |D \varphi^{-1}(y)|^n  \, dy
	= \int\limits_\Omega \frac{|\Adj D \varphi (x)|^n}{J(x,\varphi)^{n-1}} \, dx
	= \int\limits_\Omega K_I(x,\varphi) \, dx,
\end{equation*}
holds \cite{Onn2006,Vod2012}.

	In \cite{IwaOnn2009,IwaOnn2010,IwaOnn2011,IwaOnn2012} the authors 
	study 
	$W^1_n$-homeomorphisms
	$\varphi\colon \Omega \to \Omega'$
	between two bounded domains in 
	$\mathbb{R}^n$ 
	with finite 
	energy
	and consider the behavior of such mappings.
	In general, the weak
	$W^1_n$-limit of a sequence of homeomorphisms
	may lose injectivity.
	However, if there is a requirement on 
	totally boundedness of norms of the inner distortion
	$\|K_I(\cdot,\varphi) \mid L_1(\Omega) \|$
	and some additional requirements,
	then the limit map is a homeomorphism.
	The main idea behind the proof of existence and global invertibility 
	is to investigate admissible deformations 
	$\varphi_k$
	in parallel with
	its inverse
	$\varphi_k^{-1}$
	along to a minimizing sequence
	$\{\varphi_k\}$.
	This is possible\footnote{See \cite{Vod2012} for  another proof of this property   under weaker assumption}
	 due to integrability of the inner distortion as this ensures the existence and regularity 
	of an inverse map
	belonging to 
	$W^1_n$.
	Note that the authors of these papers include requirements of integrability 
	of the inner distortion coefficient in the coercive inequality.
	The authors of the current paper prefer to include this condition 
	to the class of admissible deformation, so as to obtain more ``\textit{fine graduation}'' 
	of deformations.

We also emphasize that the aforementioned regularity properties of an inverse homeomorphism 
(including the case 
$q = n-1$)
can be obtained using
a technique of the theory of bounded operators of Sobolev spaces.
Putting
$p = \frac{\sigma(n-1)}{\sigma-1}$,
$p' = \sigma$,
$q = n-1$,
$q' = \infty$,
$\varrho = \sigma$
in Theorem~\ref{thm:Vod3-4} \cite[Theorem 3]{Vod2012}
we derive the aforementioned result from \cite{Ball1981}.
By taking
$p = p' = \varrho = n$,
$q = n-1$,
$q' = \infty$
in the same theorem
one can obtain the regularity of an inverse mapping from \cite{Onn2006}.

Whereas we have dealing with 
$W^1_n$-mappings 
with finite distortion in this article,
we  reduce 
coercivity conditions on the stored-energy function to 
\begin{equation} \label{neq:coer1}
	W(x,F)\geq \alpha |F|^n + g(x).
\end{equation}
For given constants 
$p$,
$q\geq 1$
and
$M > 0$,
and the total energy 
$I$,
identified by~\eqref{def:energy_func},
we define
the class of admissible deformations
\begin{multline*}
	\Hom(p,q,M)=\{
	\varphi \colon \Omega \to \Omega' \text{ is a homeomorphism with finite distortion, }
	\\
	\varphi \in W^1_1(\Omega),\:
	I(\varphi) < \infty, \:
	\: J(x,\varphi) \geq 0 
	\text{ a.e.~in } \Omega, \: 
	\\
	K_O(\cdot, \varphi) \in L_{p} (\Omega), \:
	\| K_I(\cdot,\varphi) \mid L_{q} (\Omega)\| \leq M \},
\end{multline*}
where
$K_O(x,\varphi)$
and
$K_I(x,\varphi)$
are the outer and the inner distortion coefficients.
We prove an existence theorem in  the following formulation
(see precise requirements
in Section~\ref{subsec:exist}).

\begin{theorem*}[Theorem~\ref{thm:main} below]
	Let
	$\Omega$, 
	$\Omega'\subset \R^n$
	be bounded domains
	with Lipschitz boundaries.
	Given a~polyconvex function
	$W(x,F)$,
	satisfying the coercivity inequality~\eqref{neq:coer1}, 
	and a~nonempty set~%
	$\Hom(n-1,s,M)$
	with
	$M > 0$,
	$s > 1$,
	then there exists at least one homeomorphic mapping
	\begin{equation*}
		\varphi_0\in\Hom(n-1,s,M)\quad\text{such that}\quad
		I(\varphi_0)=\inf\limits\{I(\varphi),\varphi\in\Hom(n-1,s,M)\}.
	\end{equation*}
\end{theorem*}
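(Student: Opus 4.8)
The proof runs by the direct method of the calculus of variations, with polyconvexity supplying weak lower semicontinuity and Theorem~\ref{thm:ae_injectivity} rescuing injectivity in the limit. First I would fix a minimizing sequence $\{\varphi_k\}\subset\Hom(n-1,s,M)$, so that $I(\varphi_k)\to\inf\{I(\varphi):\varphi\in\Hom(n-1,s,M)\}$, the infimum being finite since the set is nonempty. The coercivity inequality~\eqref{neq:coer1} gives $\alpha\int_\Omega|D\varphi_k|^n\,dx\le I(\varphi_k)-\int_\Omega g\,dx\le C$, so $\{D\varphi_k\}$ is bounded in $L_n(\Omega)$; since the images lie in the bounded domain $\Omega'$, the sequence is bounded in $W^1_n(\Omega)$ and, along a subsequence, $\varphi_k\rightharpoonup\varphi_0$ weakly in $W^1_n(\Omega)$. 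In parallel I would follow the inverses $\psi_k=\varphi_k^{-1}$: as $s>1$, Hölder's inequality yields $\int_\Omega K_I(x,\varphi_k)\,dx\le M|\Omega|^{1-1/s}$, and the change-of-variables identity $\int_{\Omega'}|D\psi_k|^n\,dy=\int_\Omega K_I(x,\varphi_k)\,dx$ shows that $\{\psi_k\}$ is bounded in $W^1_n(\Omega')$, so $\psi_k\rightharpoonup\psi_0$ after passing to a further subsequence.

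Next I would pass to the limit in the minors. Since $|\Adj D\varphi_k|\le|D\varphi_k|^{n-1}$, the adjugates are bounded in $L_{n/(n-1)}(\Omega)$, and the standard weak continuity of minors identifies the weak limits as $\Adj D\varphi_0$ and $\det D\varphi_0$; in particular $J(x,\varphi_0)\ge0$ a.e. Polyconvexity of $W$ then gives $I(\varphi_0)\le\varliminf_{k}I(\varphi_k)=\inf I$, so it only remains to prove that $\varphi_0$ is admissible, after which it is automatically a minimizer.

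The heart of the argument, and the step I expect to be the main obstacle, is to recover the homeomorphism property from the weak limit, since a weak $W^1_n$-limit of homeomorphisms may lose injectivity. Here I would apply Theorem~\ref{thm:ae_injectivity}: each $\varphi_k$ is a homeomorphism of finite distortion inducing a bounded composition operator $\varphi_k^{\ast}\colon L^1_n(\Omega')\to L^1_{n-1}(\Omega)$ (equivalently $K_O(\cdot,\varphi_k)\in L_{n-1}(\Omega)$), and that theorem yields almost-everywhere injectivity of $\varphi_0$. Because $\varphi_0\in W^1_n(\Omega)$ has finite distortion with $K_I(\cdot,\varphi_0)\in L_1(\Omega)$, it admits a continuous (indeed monotone) representative; passing to the limit in $\psi_k\circ\varphi_k=\mathrm{id}_\Omega$ and $\varphi_k\circ\psi_k=\mathrm{id}_{\Omega'}$ identifies $\psi_0$ as a two-sided inverse, which together with a.e.\ injectivity upgrades $\varphi_0$ to a homeomorphism of $\Omega$ onto $\Omega'$ with $\varphi_0^{-1}=\psi_0\in W^1_n(\Omega')$. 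Verifying the hypotheses of Theorem~\ref{thm:ae_injectivity} along the minimizing sequence --- in particular securing the requisite composition-operator bound and the correct surjectivity onto $\Omega'$ in the absence of an explicit boundary condition --- is the delicate part, and is precisely why the composition-operator formulation is needed.

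Finally I would check the remaining membership constraints of $\Hom(n-1,s,M)$ by weak lower semicontinuity of the distortion functionals. For $s\ge1$ and $n\ge2$ the function $(H,\delta)\mapsto|H|^{ns}\delta^{-(n-1)s}$ is convex on $\M^{n\times n}\times(0,\infty)$, so $F\mapsto K_I(x,F)^s$ is polyconvex and hence $\|K_I(\cdot,\varphi_0)\mid L_s(\Omega)\|\le\varliminf_k\|K_I(\cdot,\varphi_k)\mid L_s(\Omega)\|\le M$; the analogous convexity of $(F,\delta)\mapsto|F|^{n(n-1)}\delta^{-(n-1)}$, applied along a subsequence on which the operator bound of the previous step stays uniform, gives $K_O(\cdot,\varphi_0)\in L_{n-1}(\Omega)$, while finite distortion of $\varphi_0$ follows from $J(x,\varphi_0)\ge0$ a.e.\ together with the a.e.\ finiteness of $K_I(\cdot,\varphi_0)$. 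Collecting these facts, $\varphi_0\in\Hom(n-1,s,M)$ and $I(\varphi_0)=\inf I$, which proves the theorem.
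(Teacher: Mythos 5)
Your overall skeleton is the paper's: direct method, coercivity giving a $W^1_n(\Omega)$-bound, weak continuity of minors and polyconvexity for lower semicontinuity, Theorem~\ref{thm:ae_injectivity} for almost-everywhere injectivity of the limit, and semicontinuity of distortion for the membership constraints. The genuine gap is in the step that upgrades the a.e.\ injective limit $\varphi_0$ to a homeomorphism. You attempt this by passing to the limit in $\psi_k\circ\varphi_k=\mathrm{id}_\Omega$ and $\varphi_k\circ\psi_k=\mathrm{id}_{\Omega'}$, but you only secured \emph{weak} $W^1_n(\Omega')$-convergence of $\psi_k$, which does not allow passage to the limit in a composition; the paper needs Lemma~\ref{lem:prop_inv} (Mostow's oscillation estimate plus Arzel\`a--Ascoli) to get \emph{locally uniform} convergence of the inverses. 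Even granting that, the identity $\psi_0\circ\varphi_0=\mathrm{id}$ holds only off a null set and off the set of points that $\varphi_0$ sends to $\partial\Omega'$, and a continuous, a.e.\ injective map with such an a.e.\ inverse need not be injective everywhere, need not be open, and need not map $\Omega$ into (let alone onto) $\Omega'$ --- continuity plus a.e.\ injectivity does not exclude collapsing a lower-dimensional set. The paper closes exactly this hole with Rajala's theorem (Theorem~\ref{thm:Raj}): since $\varphi_0\in W^1_n(\Omega)$ has finite distortion with $K_O(\cdot,\varphi_0)\in L_{n-1}(\Omega)$ and $K_I(\cdot,\varphi_0)\in L_s(\Omega)$, $s>1$, it is continuous, \emph{open and discrete}, and an open discrete a.e.\ injective map is injective, hence a homeomorphism; moreover openness/discreteness is what lets Lemma~\ref{lem:N-1} and Lemma~\ref{lem:J>0} deliver $J(x,\varphi_0)>0$ a.e. Note that this is the only place where $s>1$ is essential; your argument never uses $s>1$ in any substantive way, and if it were correct as written it would prove the theorem for $s=1$ without boundary data, which the paper explicitly cannot do (for $s=1$ it must replace Theorem~\ref{thm:Raj} by a degree argument that requires prescribed boundary values or a fixed homotopy class, Remark~\ref{rem:boundary_condition}).

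Two further inaccuracies in how you feed the hypotheses. First, Theorem~\ref{thm:ae_injectivity} requires the operator norms to be bounded \emph{uniformly in} $k$; in the class $\Hom(n-1,s,M)$ the only uniform bound is $\|K_I(\cdot,\varphi_k)\mid L_s(\Omega)\|\le M$, which is what Corollary~\ref{cor:ae_injectivity_2} converts into total boundedness of the pullback operator norms. The condition $K_O(\cdot,\varphi_k)\in L_{n-1}(\Omega)$, to which you attach the composition-operator bound, is a mere membership condition with no uniform constant, so it cannot serve as hypothesis 3 of the theorem. Second, and for the same reason, your final step --- lower semicontinuity of $\|K_O\|_{L_{n-1}}$ ``along a subsequence on which the operator bound stays uniform'' --- is unfounded: the class provides no such subsequence. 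The paper instead obtains $K_O(\cdot,\varphi_0)\in L_{n-1}(\Omega)$ and $\|K_I(\cdot,\varphi_0)\mid L_s(\Omega)\|\le M$ from the cited semicontinuity theorems for distortion coefficients of weakly convergent sequences of mappings with finite distortion.
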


The existence theorem is also obtained 
for classes  
of mappings with prescribed boundary values
and the same homotopy class as a given one,
and covers the case 
$s=1$
in some cases (Section~\ref{subsec:exist}).
Note that 
	the class of admissible deformations from the paper \cite{IwaOnn2009} 
	is related to  
	considered in the present paper classes
	(see Remark~\ref{rem:free_outer_dist}).
	For the same reason, the elasticity result of \cite{Ball1981} can be derived from the result of the present paper.
	Indeed, the integrability of the distortion coefficient follows from the H\"{o}lder inequality and \eqref{neq:qoerB}
	by 
	$s = \frac{\sigma r}{rn + \sigma - n}$
	where 
	$\sigma = \frac{q(1+m)}{q+m}$
	(see Section~\ref{sec:examples}).

Some important properties of mappings of these classes can be found in \cite{Vod}.
Note also that
the property of mapping to be sense preserving
in the topological way
follows from the property that
the required deformation is a~mapping with bounded
$(n,q)$-distortion
if
$q> n-1$
\cite[Remark 1]{BayVod2015}.

Additionally, there is a different approach to injectivity which was proposed by
P.~Ciarlet and I.~Ne\v{c}as in~\cite{CiarNec1987}.
This approach rests upon the additional
{\it injectivity condition}
\begin{equation}\label{cond:injectivity}
	\int\limits_{\Omega} J(x,\varphi)\, dx \leq |\varphi(\Omega)|
\end{equation}
on the admissible deformations
if 
$\Omega \subset \R^n$
is a
bounded open set
with
$C^1$-smooth
boundary,
$\varphi \in W^1_p (\Omega)$,
$p > n$,
and
$J(x,\varphi) > 0$
a.e.\
in 
$\Omega$.
Under these assumptions, the minimization problem of the energy functional can be constrained to a.e.\ injective deformations. 
In the three-dimensional case the relation~\eqref{cond:injectivity}
under the weaker hypothesis
$p > n-1$
was studied in
\cite{Tan1988}.
In this case, 
$\varphi$
may no longer be
continuous
and the
inverse mapping
$\varphi^{-1}$
has only regularity 
$BV_\loc (\varphi (\Omega), \mathbb{R}^n)$.
Local invertibility properties of the mapping
$\varphi \in W^1_p (\Omega)$,
$p \geq n$,
under the condition
$J(x,\varphi) > 0$
a.e.,
can be found in \cite{FonGan1995}.
The case 
$p > n - 1$
is considered in the recent paper \cite{BarHenMor2017},
the approach of which uses the topological degree as an essential tool
and based on some ideas of \cite{MulSpe1995}. 
Some other studies of local and global invertibility  
in the context of elasticity
can be found in~%
\cite{BauPhi1994,CiarNec1985,ConLel2003,HenMor2010,HenMor2015,IwaOnn2012,MulSpe1995,MulSpeTan1996,Sve1988,SwaZie2002,SwaZie2004}. 
Also, see \cite{Ball2002,Ball2010} 
for a general review of research in the elasticity theory.

We will now give an outline of the paper. The first section contains general auxiliary facts
and some facts about mappings with finite distortion.
The second section is devoted to the injectivity almost everywhere property
(Theorem~\ref{thm:ae_injectivity}).
This property follows from jointly boundedness of pullback operators 
defined by a sequence of homeomorphisms 
$\varphi_k$
and  the
uniform convergence of 
inverse homeomorphisms
$\psi_k$
(Lemma~\ref{lem:prop_inv}).
Moreover, as a consequence, we obtain the strict inequality
$J(x, \varphi_0)>0$ 
a.e.\
(Lemma \ref{lem:J>0}).
The third section is dedicated to the existence theorem.
In the forth section we give two examples to illustrate advantages of our method.
Appendix contains some discussion about geometry of domains that does not direct bear on the subject of this paper but is of independent sense.

Some ideas of this article 
were announced in the note~\cite{VodMol2015}.


\section{Mappings with finite distortion}

Mappings with finite distortion is a natural generalization of 
mappings with bounded distortion.
The reader not familiar with mappings with bounded distortion
may look at~\cite{Resh1982,Rick1993}.
To 	take a close look at the theory of mapping with bounded distortion,
the reader can study 
monographs~\cite{HenKos2014,IwaMar2001}.

In this section we present some important concepts and statements 
necessary to proceed.
On a~bounded domain
$\Omega\subset\R^n$,
i.e.\ a nonempty, connected, and open set,
we define in the~standard way
(see~\cite{Maz2011} for instance) 
the space
$C_0^{\infty}(\Omega)$
of smooth 
functions with compact support,
the~Lebesgue spaces
$L_p(\Omega)$
and
$L_{p, \loc}(\Omega)$
of integrable functions,
and Sobolev spaces
$W^1_p(\Omega)$
and
$W^1_{p, \loc}(\Omega)$,
$1 \leq p \leq \infty$.
A mapping 
$f \in L_{1,\loc}(\Omega)$
belongs to 
{\it homogeneous Sobolev class}
$L^1_p(\Omega)$,
$p \geq 1$,
if it has the weak derivatives of the first order
and its differential
$D f(x)$
belongs to
$L_p (\Omega)$.

\begin{definition}\label{def:Lip_boundary}
	We say that a~bounded domain
	$\Omega\subset\R^n$
	has a \textit{Lipschitz boundary}
	if for each
	$x \in \partial \Omega$
	there exists a neighborhood 
	$U$
	such that
	the set 
	$\Omega \cap U$
	is represented by the inequality
	$\xi_n < f(\xi_1, \dots \xi_{n-1})$
	in some Cartesian coordinate system 
	$\xi$
	with Lipschitz continuous function
	$f\colon \R^{n-1} \to \R$.

	Domains with Lipschitz boundary are sometimes called domains having 
	\textit{the strong Lipschitz property},
	whereas 
	\textit{Lipschitz domains}
	are defined through quasi-isometric mappings.
	Detailed discussion see in Appendix~\ref{sec:geometry}.
\end{definition}

Recall that for 
topological spaces
$X$
and
$Y$,
a continuous mapping 
$f\colon X \to Y$ 
is
{\it discrete} 
if 
$f^{-1}(y)$ 
is a discrete set for all 
$y \in Y$
and 
$f$ 
is 
{\it open} 
if it takes open sets onto open sets.

\begin{definition}[\hspace{-.3pt}\cite{IwaSve1993,VodGold1976}]\label{def:FD}
	Given an open set 
	$\Omega \subset \R^n$
	and
	a~mapping
	$f\colon\Omega\to \R^n$
	with
	$f \in W^1_{1, \loc}(\Omega)$
	is called a~{\it mapping with finite distortion},
	whenever
	\begin{equation*}
		|Df(x)|^n \leq K(x) |J(x,f)| \quad
		\text{for almost all}\: x\in \Omega,
	\end{equation*}
	where
	$1 \leq K(x) < \infty$
	a.e.\ in 
	$\Omega$%
	\footnote{
		Some authors include condition 
		$J(x,f) \geq 0$
		in Definition~\ref{def:FD}.
		We do not use the condition for the Jacobian to be non-negative as it is unnecessary in the context of the theory of composition operators, see details in \cite{Vod2012}.}.
\end{definition}

In other words,
the {\it finite distortion} condition 
amounts to the~vanishing of the partial derivatives of 
$f \in W^1_{1, \loc}(\Omega)$
almost everywhere on the zero set of the Jacobian
$Z = \{x \in \Omega: J(x,f) = 0\}$. 
Similarly, the {\it finite codistortion} condition 
means that 
$\Adj Df (x) = 0$ 
a.e.\ on the the set 
$Z$.
If 
$K \in L_\infty(\Omega)$, 
a mapping
$f$ 
is called a~\textit{mapping with bounded distortion} (or a~\textit{quasiregular mapping}).

For a mapping with finite distortion with 
$J(x,f) \geq 0$
a.e.\
the functions
\begin{equation}\label{def:outer_inner_distortion}
	K_O(x,f)=\frac{|Df(x)|^n}{J(x,f)} 
	\quad \text{and} \quad
	K_I(x,f)=\frac{|\Adj Df(x)|^n}{J(x,f)^{n-1}}
\end{equation}
when 
$0 < J(x,f) < \infty$
and 
$K_O(x,f) = K_I(x,f) = 1$ 
otherwise
are called the {\it outer} and the {\it inner distortion} coefficients of~%
$f$
at the point~%
$x$.
It is easy to see that
\begin{equation*}
	K_I^{\frac{1}{n-1}}(x,f) \leq K_O(x,f) \leq K_I^{n-1}(x,f)
	\quad\text{for a.e.} \:
	x\in \Omega. 
\end{equation*}

In 1967 Yu.~Reshetnyak proved strong topological properties of mappings with bounded distortion: continuity, openness, and discreteness
\cite{Resh1967-2}. 
Theorem~2.3 of~\cite{VodGold1976} shows that
$W^1_{n,\loc}$-mapping 
with finite distortion and nonnegative Jacobian, 
$J(x,f) \geq 0$ a.e.,
is continuous. 

In recent years, a lot of research has been done in order to find the sharp assumptions 
for these topological properties in the class of mappings with finite distortion, 
for example, \cite{HeiKos1993,HenKos2005,HenMal2002,IwaSve1993,ManVill1998}.

\begin{theorem}[{\hspace{-.3pt}\cite{Raj2010}}]\label{thm:Raj}
	Let 
	$f\colon \Omega \to \mathbb{R}^n$, 
	$n\geq 2$,
	be a non-constant mapping with finite distortion satisfying
	$J(x,f) \geq 0$ a.e.,
	$f\in W^1_{n,\loc}(\Omega)$,
	$K_O(\cdot,f)\in L_{n-1, \loc}(\Omega)$
	and 
	$K_I(\cdot,f)\in L_{s, \loc}(\Omega)$
	for some
	$s > 1$.
	Then
	$f$
	is discrete
	and open. 
\end{theorem}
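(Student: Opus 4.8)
The plan is to combine topological degree theory with modulus-of-continuity estimates extracted from the distortion bounds, following the classical scheme of Reshetnyak for quasiregular mappings and its refinements to the finite-distortion setting. Since $f\in W^1_{n,\loc}(\Omega)$ has finite distortion with $J(x,f)\ge 0$ a.e., it is already continuous by the result of \cite{VodGold1976} cited above; hence it remains only to prove that $f$ is discrete and open, i.e.\ that the non-constant continuous map $f$ is a branched covering. Observe that the integrability hypotheses $K_O(\cdot,f)\in L_{n-1,\loc}$ and $K_I(\cdot,f)\in L_{s,\loc}$, $s>1$, play no role in continuity and are reserved precisely for the topological conclusions.

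First I would establish that $f$ is \emph{sense-preserving}. For a subdomain $\Omega'\Subset\Omega$ and a point $y\notin f(\partial\Omega')$ the topological degree $\deg(f,\Omega',y)$ is defined, and the change-of-variables/degree identity
\[
\int_{\R^n}\eta(y)\,\deg(f,\Omega',y)\,dy=\int_{\Omega'}\eta(f(x))\,J(x,f)\,dx,\qquad \eta\in C_0^\infty(\R^n),
\]
forces $\deg(f,\Omega',\cdot)\ge 0$ a.e.\ because $J(x,f)\ge 0$; since the degree is constant on the components of $\R^n\setminus f(\partial\Omega')$, it is nonnegative off $f(\partial\Omega')$. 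The delicate point here is the validity of this identity, which rests on the Lusin condition $(N)$. For $W^1_n$-mappings of finite distortion with integrable inner distortion the condition $(N)$ is known to hold, so the area and degree machinery applies without modification.

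Second, I would derive the modulus inequalities for curve families. Starting from the pointwise bounds $|Df(x)|^n\le K_O(x,f)\,J(x,f)$ and the corresponding estimate for $\Adj Df$ in terms of $K_I(x,f)$, and applying Hölder's inequality together with $K_O\in L_{n-1,\loc}$ and $K_I\in L_{s,\loc}$, one obtains the inner distortion inequality
\[
M(f\Gamma)\le \int_\Omega K_I(x,f)\,\rho(x)^n\,dx\qquad\text{for every }\rho\text{ admissible for }\Gamma,
\]
controlling the modulus of the image family $f\Gamma$, together with the companion $K_O$-inequality bounding $M(\Gamma)$ by an integral of $K_O$ against a density admissible on the image side. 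These are exactly the estimates that, in tandem with sense-preservation, drive the branched-covering conclusion.

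Finally, I would invoke the topological machinery for mappings satisfying these modulus estimates. Discreteness is obtained by excluding accumulation of preimages: if $f^{-1}(y)$ clustered at a point $x_0$, a suitable spherical ring centred at $x_0$ would carry a curve family of positive modulus whose image family has vanishing modulus, contradicting the inner distortion inequality above. Openness then follows from sense-preservation together with discreteness via the local degree, which is positive and locally constant near each point of a fibre, so that images of small neighbourhoods are neighbourhoods. I expect the discreteness step to be the main obstacle: it is where the exponents $n-1$ for $K_O$ and $s>1$ for $K_I$ are sharp, and where the analytic modulus estimate must be pushed to its limit in order to rule out degenerate branching.
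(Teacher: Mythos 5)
A preliminary remark: the paper contains no proof of this theorem at all --- it is quoted as a black box from \cite{Raj2010} --- so your proposal has to be measured against Rajala's argument rather than anything in this text. Against that standard, your outline has two genuine gaps, both located in the part you yourself identify as the core. First, a circularity: the Poletsky-type inequality $M(f\Gamma)\le\int_\Omega K_I(x,f)\,\rho(x)^n\,dx$ (for $\rho$ admissible for $\Gamma$) cannot be ``obtained from H\"older's inequality'' at this stage. All known proofs of it, for quasiregular mappings and for mappings of finite distortion alike, run through path lifting in normal domains, and path lifting presupposes that $f$ is discrete and open --- exactly what you are trying to prove. The only modulus inequality available a priori for a Sobolev map is the opposite, $K_O$-type one: if $\rho'$ is admissible for $f\Gamma$, then $\rho'(f(x))|Df(x)|$ is admissible for almost all of $\Gamma$, whence $M(\Gamma)\le\int_\Omega\rho'(f(x))^n K_O(x,f)\,J(x,f)\,dx$; this needs no topological input, only absolute continuity on a.e.\ curve.

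Second, your discreteness step draws its contradiction from the wrong inequality. If $f^{-1}(y)$ accumulates at $x_0$, the image curve family does have vanishing modulus, but that is perfectly consistent with the upper bound $M(f\Gamma)\le\int K_I\rho^n\,dx$: an upper bound on a quantity that equals zero contradicts nothing. The contradiction must instead come from the $K_O$-type inequality (positive modulus in the domain, estimated by a weighted modulus on the image side which is then made small). That, however, is precisely the Manfredi--Villamor scheme \cite{ManVill1998}, and it is known to work only for $K_O\in L_{p,\loc}$ with $p>n-1$; at the borderline exponent $p=n-1$ the H\"older splitting degenerates --- this borderline case is the Iwaniec--\v{S}ver\'ak conjecture, still open for $n\ge3$ --- and the entire substance of \cite{Raj2010} consists of the new estimates by which the auxiliary hypothesis $K_I\in L_{s,\loc}$, $s>1$, compensates for that degeneration. ``Pushing the analytic modulus estimate to its limit'' is exactly what cannot be done here. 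A minor further point: the degree identity in your sense-preservation step holds for any continuous $W^1_{n,\loc}$ mapping by smooth approximation, so Luzin's condition $(N)$ is not needed there; and your assertion that $(N)$ ``is known to hold'' under the present hypotheses is itself unjustified before discreteness and openness are established, since the positive results on $(N)$ that you would need presuppose precisely such topological information.
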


On the other hand, mappings with finite distortion are closely related to 
boundedness of composition operators of Sobolev spaces.
Recall that 
a~measurable mapping 
$\varphi\colon\Omega\to \Omega'$ 
induces {\it a~bounded operator}
$\varphi^*\colon L^1_p(\Omega^{\prime})\to L^1_q(\Omega)$ 
{\it by the composition rule}, 
$1\leq q\leq p<\infty$, 
if 
the operator
$\varphi^*\colon L^1_p(\Omega^{\prime}) \cap {\Lip}_{\loc} (\Omega^{\prime})\to L^1_q(\Omega)$ 
with
$\varphi^*(f)=f\circ \varphi$,
$f\in  L^1_p(\Omega^{\prime}) \cap {\Lip}_{\loc}(\Omega^{\prime})$,
is bounded.

\begin{lemma}[\hspace{-.3pt}\cite{VodUhl2002}]\label{lem:FD}
	If a measurable mapping 
	$\varphi$
	induces a bounded composition operator
	\begin{equation*}
		\varphi^*\colon L^1_p(\Omega') \to L^1_q (\Omega), 
		\quad 1\leq q \leq p \leq \infty,
	\end{equation*}
	then 
	$\varphi$
	has finite distortion.
\end{lemma}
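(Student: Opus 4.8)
The plan is to establish the defining inequality of finite distortion, which (since we may take $1\le K<\infty$ a.e.) is equivalent to the single assertion that $D\varphi(x)=0$ for almost every $x$ in the zero set $Z=\{x:J(x,\varphi)=0\}$: on the complement $\{|J|>0\}$ one simply sets $K(x)=|D\varphi(x)|^n/|J(x,\varphi)|$, which is finite a.e.\ and satisfies $K\ge 1$ by the elementary bound $|J(x,\varphi)|\le|D\varphi(x)|^n$. First I would recover the Sobolev regularity of $\varphi$ by feeding the bounded operator the locally truncated coordinate functions $\pi_i(y)=y_i$, which lie in $L^1_p(\Omega')\cap\Lip_\loc(\Omega')$ and satisfy $\varphi^*\pi_i=\varphi_i$; boundedness then forces $\varphi_i\in L^1_{q,\loc}(\Omega)$, so $\varphi\in W^1_{q,\loc}(\Omega)$ and in particular $\varphi$ is approximately differentiable a.e., whence $D\varphi(x)$ and $J(x,\varphi)$ are defined a.e. Throughout I work with $p<\infty$, the setting of the definition of the induced operator; this restriction is genuinely necessary, since for $p=\infty$ the collapsing map $\varphi(x)=(x_1,\tfrac12,\dots,\tfrac12)$ induces a bounded operator into every $L^1_q$ while having $J\equiv 0$ and $D\varphi\neq 0$.

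For the core estimate I would not try to exhibit an explicit extremal test function, since the worst case requires simultaneous transversal concentration and tangential oscillation of $\nabla f$ and is awkward to write down. Instead I would introduce the localized operator seminorm as a set function on open subsets $U\subseteq\Omega'$,
\[
	\Phi(U)=\big\|\,\varphi^{*}\colon L^1_p(U)\to L^1_q(\varphi^{-1}(U))\,\big\|^{\varkappa},
	\qquad \tfrac1\varkappa=\tfrac1q-\tfrac1p .
\]
The exponent $\varkappa$ is chosen precisely so that $\Phi$ is monotone and countably superadditive: test functions supported on disjoint open pieces combine additively, and for $q<p$ it is the $\varkappa$-th power that turns the resulting mixed $\ell^q$/$\ell^p$ inequality into genuine superadditivity. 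Since $\Phi(\Omega')\le\|\varphi^{*}\|^{\varkappa}<\infty$, the standard lemma on bounded monotone countably superadditive set functions yields a finite derivative $\Phi'(y)=\lim_{r\to0}\Phi(B(y,r))/|B(y,r)|$ for a.e.\ $y\in\Omega'$, with $\int_{\Omega'}\Phi'\le\Phi(\Omega')$. Testing $\Phi$ on functions that are approximately linear at scale $r$ about a point $y_0=\varphi(x_0)$ and combining Lebesgue differentiation in $y$ with the change-of-variables formula for $\varphi$ (legitimate via the Federer decomposition of $\varphi$ into countably many Lipschitz pieces, on which the area formula holds) I expect to reach the pointwise bound $|D\varphi(x)|\le \Phi'(\varphi(x))^{1/\varkappa}\,|J(x,\varphi)|^{1/p}$ for a.e.\ $x$. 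In particular $D\varphi(x)=0$ whenever $J(x,\varphi)=0$, which is exactly finite distortion.

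The main obstacle is the transfer from this $y$-a.e.\ statement about $\Phi'$ to an $x$-a.e.\ statement about $D\varphi$, and here two points must be handled. First, the chain rule $\nabla(f\circ\varphi)(x)=D\varphi(x)^{*}\,(\nabla f)(\varphi(x))$ has to be justified a.e.\ for merely locally Lipschitz $f$; the danger is the null set of $\Omega'$ on which $\nabla f$ is undefined, whose $\varphi$-preimage need not be null. I would control this with the same Lipschitz-piece decomposition together with the elementary fact that $\nabla(f\circ\varphi)=0$ at every approximate-differentiability point where $D\varphi=0$, so the suspicious set contributes nothing. Second, and more seriously, one must rule out that $\varphi$ carries a set of positive measure into the Lebesgue-null set $\{\Phi'=\infty\}$, for otherwise the pointwise bound would be vacuous exactly where it is needed. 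This is where the absolute continuity built into $\Phi$ (and inherited from the hypothesis $p<\infty$) does the decisive work, and I expect making that step rigorous to be the crux of the whole argument.
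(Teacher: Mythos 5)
First, a point of context: the paper does not actually prove this lemma --- it is quoted from \cite{VodUhl2002} --- but the machinery of the genuine proof is reproduced inside the paper (Lemma~\ref{lem:Phi}, the covering Lemma~\ref{lem:bez}, and the proof of Lemma~\ref{lem:N-1} all run exactly that argument). Measured against it, your skeleton is the right one: the reduction of finite distortion to ``$D\varphi=0$ a.e.\ on $Z=\{J(x,\varphi)=0\}$'', the recovery of $\varphi\in W^1_{q,\loc}(\Omega)$ by testing with truncated coordinate functions, and the set function $\Phi$ with exponent $\varkappa=\frac{pq}{p-q}$, monotone and (super)additive on disjoint open sets, are all exactly the ingredients of \cite{VodUhl2002}. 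Your remark that $p<\infty$ is essential is also correct: your collapsing map does induce a bounded operator out of $L^1_\infty$ while failing finite distortion, and the paper itself corroborates this by adding ``finite codistortion'' as a separate hypothesis in the case $p=\infty$ of Theorem~\ref{thm:Vod6}.

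The genuine gap is at the step you yourself flag as the crux, and your proposed resolution there cannot work. You want to \emph{rule out} that $\varphi$ carries a set of positive measure into the Lebesgue-null set where $\Phi'$ fails to exist or is infinite. But this is exactly the typical situation rather than a pathology to be excluded: decomposing $\varphi$ into countably many Lipschitz pieces (Federer) and applying the area formula on each piece, one finds $|\varphi(Z\setminus N)|=0$ for some null set $N$, so whenever $|Z|>0$ (constant maps, maps collapsing part of the domain --- all allowed, and all having finite distortion) the mapping \emph{does} send a positive-measure set into a null set, and nothing prevents that null set from lying inside the bad set for $\Phi'$; a.e.-differentiation of $\Phi$ gives no information on any specific null set, and $\Phi$ has no absolute continuity with respect to Lebesgue measure to invoke. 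The actual proof never differentiates $\Phi$ pointwise. Instead, it covers a null set $Y\supset\varphi(Z\setminus N)$ by an open set $U$ with $|U|$ arbitrarily small, chooses a finite-multiplicity Besicovitch family of balls $B_i=B(y_i,r_i)$ as in Lemma~\ref{lem:bez}, and tests with the localized affine functions $f_{ij}(y)=\langle y-y_i,e_j\rangle\,\eta_i(y)$, whose gradients equal $e_j$ on $B_i$; this gives
$\int_{\varphi^{-1}(B_i)}|D\varphi(x)^{T}e_j|^q\,dx\le \Phi(2B_i)^{q/\varkappa}\bigl(Cr_i^{\,n/p}\bigr)^q$,
and summing over $i$ with H\"older's inequality for the conjugate pair $\frac{q}{\varkappa}+\frac{q}{p}=1$, together with $\sum_i\Phi(2B_i)\le\zeta_n\Phi(\Omega')$ (countable additivity plus bounded multiplicity) and $\sum_i|B_i|\le C|U|$, yields
$\int_{Z\setminus N}|D\varphi^{T}e_j|^q\,dx\le C\,\Phi(\Omega')^{q/\varkappa}\,|U|^{q/p}$.
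Letting $|U|\to0$ (here $p<\infty$ enters, since $q/p>0$) gives $D\varphi=0$ a.e.\ on $Z$. This global covering argument is what replaces your pointwise inequality $|D\varphi(x)|\le\Phi'(\varphi(x))^{1/\varkappa}|J(x,\varphi)|^{1/p}$ (whose exponent, incidentally, should be $|J|^{1/q}$ rather than $|J|^{1/p}$, though that is immaterial); without it, the proposal does not close.
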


Now we consider a generalization of inner and outer distortion functions,
which is more conducive to dealing with composition and pullback operators.
Following~\cite{Vod2012},
for a~mapping
$f\colon\Omega\to\Omega'$
of class
$W^1_{1,\loc}(\Omega)$
define the (\textit{outer}) \textit{distortion operator function}
\begin{equation*}\label{def:dist_op_func}
	K_{f,p} (x) =
	\begin{cases}
		\dfrac{|Df(x)|}{|J(x,f)|^{{1}/{p}}} &
		\text{ for } 
		x\in \Omega\setminus Z,\\
		0 &
		\text{ otherwise},
	\end{cases}
\end{equation*}
and the (\textit{inner}) \textit{distortion operator function}
\begin{equation*}\label{def:codist_op_func}
	\mathcal{K}_{f,p} (x) =
	\begin{cases}
		\dfrac{|\Adj Df(x)|}{|J(x,f)|^{(n-1)/{p}}} &
		\text{ for } 
		x\in \Omega\setminus Z,\\
		0 &
		\text{ otherwise},
	\end{cases}
\end{equation*}
where~%
$Z$
is a zero set of the Jacobian
$J(x,f)$.

\begin{remark}\label{rem:connection_distortion}
	Note that 
	$K_O(x,f) = K_{f,n}^n(x)$
	and 
	$K_I(x,f) = \K_{f,n}^n(x)$
	if 
	$x \in \Omega \setminus Z$.
	Hence 
	$K_O(\cdot,f)\in L_{n-1} (\Omega)$
	results in 
	$K_{f,n}(\cdot) \in L_{n(n-1)} (\Omega)$,
	and
	$\|K_I(\cdot,f) \mid L_s(\Omega)\| \leq M $
	implies
	$\|\K_{f,n}(\cdot) \mid L_\varrho(\Omega)\| \leq M^{1/n} $
	for 
	$\varrho = ns$.
\end{remark}

The following theorem shows the regularity properties which ensure 
that the direct and the inverse homeomorphisms belong to 
corresponding Sobolev classes.

\begin{theorem}[{\hspace{-.3pt}\cite[Theorem 3]{Vod2012}}] 
\label{thm:Vod3-4}
	Let 
	$\varphi\colon\Omega\to\Omega'$
	be a~homeomorphism with the following properties{\rm :}
	\begin{enumerate}
	\item
		$\varphi \in W^1_{q, \loc}(\Omega)$,
		$n-1 \leq q \leq \infty${\rm ;}
	\item
		the mapping
		$\varphi$
		has finite codistortion{\rm ;}
	\item
		$\mathcal{K}_{\varphi, p} \in L_{\varrho}(\Omega)$,
		where 
		$\frac{1}{\varrho} = \frac{n-1}{q} - \frac{n-1}{p}$,
		$n-1 \leq q \leq p \leq \infty$
		$($$\varrho = \infty$
		for 
		$q = p$$)$.
	\end{enumerate}

	Then the inverse homeomorphism
	$\varphi^{-1}$
	has the following properties{\rm:}
	\begin{enumerate}
	\item
		$\varphi^{-1} \in W^1_{p', \loc}(\Omega')$,
		where
		$p' = \frac{p}{p-n+1}$,
		$($$p' = 1$
		for 
		$p=\infty$$)${\rm;}
	\item
		$\varphi^{-1}$
		has finite distortion 
		$($$J(y,\varphi^{-1}) > 0$ 
		a.e.\
		for
		$n \leq q$$)${\rm;}
	\item
		$K_{\varphi^{-1}, q'} \in L_{\varrho}(\Omega')$,
		where 
		$q' = \frac{q}{q-n+1}$
		$($$q' = \infty$
		for
		$q = n-1$$)$.
	\end{enumerate}

	Moreover,
	\begin{equation*}
		\|K_{\varphi^{-1},q'}(\cdot) \mid L_{\varrho}(\Omega')\| =
		\|\mathcal{K}_{\varphi,p}(\cdot) \mid L_{\varrho}(\Omega)\|.
	\end{equation*}
\end{theorem}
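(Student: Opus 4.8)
Write $\psi=\varphi^{-1}\colon\Omega'\to\Omega$; since $\varphi$ is a homeomorphism, $\psi$ is a continuous bijection, and the whole statement is a transfer of the hypotheses on $\varphi$ to $\psi$ through the pointwise relation between their differentials. The plan is to build everything on a single algebraic identity relating the distortion operator functions of $\varphi$ and $\psi$, and then to read off all three conclusions by one change of variables and one application of H\"older's inequality. First I would record the pointwise backbone: a $W^1_{q,\loc}$-homeomorphism is approximately differentiable a.e., so at a.e.\ $x\in\Omega\setminus Z$ the inverse differential exists and
\[
	|D\psi(\varphi(x))|=\frac{|\Adj D\varphi(x)|}{|J(x,\varphi)|},\qquad
	|J(\varphi(x),\psi)|=\frac{1}{|J(x,\varphi)|}.
\]
The finite-codistortion hypothesis forces $\Adj D\varphi=0$ a.e.\ on $Z$, which makes $K_{\psi,q'}$ and $\K_{\varphi,q}$ genuinely defined a.e.\ and guarantees that $Z$ (and its image under $\varphi$) contribute nothing to the integrals below. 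Substituting these relations into the definition of $K_{\psi,q'}$ and using $1-1/q'=(n-1)/q$ gives the key identity
\[
	K_{\psi,q'}(\varphi(x))=\frac{|\Adj D\varphi(x)|}{|J(x,\varphi)|^{\,(n-1)/q}}=\K_{\varphi,q}(x)\qquad\text{for a.e. }x\in\Omega.
\]

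The hard part, which I expect to be the main obstacle, is to upgrade the a.e.\ pointwise differentiability of $\psi$ to genuine membership $\psi\in W^1_{1,\loc}(\Omega')$ with weak differential equal to the pointwise one. This does not follow from differentiability a.e.; one must verify that $\psi$ is absolutely continuous on almost every line, and it is exactly here that the low regularity $q\ge n-1$ is felt. I would invoke the inverse-function theory for Sobolev homeomorphisms, feeding in both hypotheses: finite codistortion to control the degenerate set $Z$, and the integrability $\K_{\varphi,p}\in L_\varrho(\Omega)$ to furnish the a priori bound (displayed below) that makes the candidate derivative locally integrable. The same circle of ideas, together with the duality ``codistortion of $\varphi$ $\leftrightarrow$ distortion of $\psi$'', yields that $\psi$ has finite distortion, $D\psi=0$ a.e.\ on $\{J(\cdot,\psi)=0\}$.

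The quantitative conclusions then follow from the area formula applied to the homeomorphism $\varphi$ (the forward change of variables $dy=|J(x,\varphi)|\,dx$, valid for any injective $W^1_{1,\loc}$-map through approximate differentiability). Raising the key identity to the power $\varrho$ and integrating,
\[
	\int_{\Omega'}K_{\psi,q'}(y)^{\varrho}\,dy
	=\int_{\Omega}\K_{\varphi,q}(x)^{\varrho}\,|J(x,\varphi)|\,dx
	=\int_{\Omega}\frac{|\Adj D\varphi(x)|^{\varrho}}{|J(x,\varphi)|^{\,\varrho(n-1)/q-1}}\,dx.
\]
The prescribed value $\tfrac1\varrho=(n-1)(\tfrac1q-\tfrac1p)$ is precisely what makes the exponent collapse, $\varrho(n-1)/q-1=\varrho(n-1)/p$, so that the last integrand equals $\K_{\varphi,p}(x)^{\varrho}$. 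This simultaneously proves $K_{\varphi^{-1},q'}\in L_\varrho(\Omega')$ and the asserted norm equality $\|K_{\varphi^{-1},q'}\mid L_\varrho(\Omega')\|=\|\K_{\varphi,p}\mid L_\varrho(\Omega)\|$. The management of the exceptional sets—the non-differentiability set of $\varphi$, the set $Z$, and their images—is folded into this step and is again controlled by finite codistortion.

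Finally, for the regularity $\psi\in W^1_{p',\loc}(\Omega')$ I would not integrate $|D\psi|^{p'}$ directly but factor $|D\psi|=K_{\psi,q'}\,|J(\cdot,\psi)|^{1/q'}$ and apply H\"older's inequality with the conjugate pair of exponents $\varrho/p'$ and $q'/p'$. These are indeed conjugate because $1/\varrho+1/q'=1-(n-1)/p$ and $p'=p/(p-n+1)$, whence $p'/\varrho+p'/q'=1$. Since $\psi$ is an a.e.-injective Sobolev homeomorphism, $\int_{\Omega'}|J(y,\psi)|\,dy\le|\Omega|<\infty$, so $J(\cdot,\psi)\in L_1(\Omega')$, and H\"older gives
\[
	\int_{\Omega'}|D\psi|^{p'}\,dy
	\le\|K_{\psi,q'}\mid L_\varrho(\Omega')\|^{p'}\,\|J(\cdot,\psi)\mid L_1(\Omega')\|^{p'/q'}<\infty,
\]
which establishes conclusion~$1$. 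The positivity $J(\cdot,\psi)>0$ a.e.\ in the range $q\ge n$ follows because a $W^1_{q,\loc}$-homeomorphism of finite distortion with $q\ge n$ enjoys the Luzin $(N)$ property, so $\varphi(Z)$ is null and $J(\cdot,\psi)=1/J(\cdot,\varphi)\in(0,\infty)$ off a null set.
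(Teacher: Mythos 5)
Your quantitative skeleton is exactly right, but first note what the paper itself does: Theorem~\ref{thm:Vod3-4} is not proved in this paper at all — it is imported verbatim from \cite[Theorem~3]{Vod2012}, where the argument runs through the theory of bounded composition and pullback operators (the same machinery that appears here as Theorems~\ref{thm:bnd_comp_op}, \ref{thm:Vod6}, \ref{thm:pullback_inverse} and Remark~\ref{rem:pullback}), i.e.\ through operator and capacity estimates rather than pointwise calculus. Within your pointwise approach, the algebra is all correct: the identity $K_{\psi,q'}(\varphi(x))=\K_{\varphi,q}(x)$ with $\psi=\varphi^{-1}$, the collapse $\varrho(n-1)/q-1=\varrho(n-1)/p$ forced by $\tfrac1\varrho=\tfrac{n-1}{q}-\tfrac{n-1}{p}$, and the conjugacy $\tfrac{p'}{\varrho}+\tfrac{p'}{q'}=1$ behind the H\"older step are precisely the computations that yield conclusion~3, the norm equality, and conclusion~1 \emph{once the qualitative part is in place}.

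The genuine gap is that the qualitative part — which is the actual content of the theorem — is assumed rather than proved. You acknowledge that the "hard part'' is showing $\psi\in W^1_{1,\loc}(\Omega')$ (absolute continuity on lines), that its weak differential agrees a.e.\ with the algebraic inverse $(D\varphi)^{-1}\circ\psi$, and that $\psi$ has finite distortion (this last point is what lets you discard $\varphi(Z\cup\Sigma)$, a set which can perfectly well have \emph{positive} measure even though $Z\cup\Sigma$ is null); but you then resolve all of these by invoking "the inverse-function theory for Sobolev homeomorphisms.'' That theory \emph{is} this theorem, so the appeal is circular. The issue is sharpest at the endpoint $q=n-1$ admitted by the hypotheses: a general $W^1_{n-1}$-homeomorphism need not have a Sobolev inverse at all, and it is exactly the codistortion hypotheses 2--3 that rescue the situation, via the operator-theoretic route of \cite{Vod2012} or via oscillation/coarea estimates in the spirit of the $W^1_{n-1}$-literature; none of that work is replaced by the change-of-variables and H\"older computations, which constitute only the easy second half of a proof. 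A further, repairable, slip: for the parenthetical $J(y,\varphi^{-1})>0$ a.e.\ when $q\ge n$ you invoke Luzin's condition (N) for "a $W^1_{q,\loc}$-homeomorphism of finite distortion,'' but $\varphi$ is only assumed to have finite \emph{codistortion}; you would need the version of condition~(N) valid for arbitrary $W^1_{n,\loc}$-homeomorphisms.
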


\begin{remark}\label{rem:Vod3-4}
	If replace the condition 2 on ``the mapping
	$\varphi$
	has finite distortion''
	and the condition 3 on one with the outer distortion operator function:
	``$K_{\varphi, p} \in L_{\varkappa}(\Omega)$
		where 
		$\frac{1}{\varkappa} = \frac{1}{q} - \frac{1}{p}$,
		$n-1 \leq q \leq p \leq \infty$
		$($$\varkappa = \infty$
		for 
		$q = p$$)$''.
	Then the conclusion of this theorem is valid with the next estimate
	\begin{equation*}
		\|K_{\varphi^{-1},q'}(\cdot) \mid L_{\varrho}(\Omega')\| \leq
		\|K_{\varphi,p}(\cdot) \mid L_{\varkappa}(\Omega)\|^{n-1}
	\end{equation*}
	(see \cite[Theorem 4]{Vod2012}).
\end{remark}

\begin{theorem}[\hspace{-.3pt}\cite{Vod2012, VodUhl1998, VodUhl2002}] \label{thm:bnd_comp_op}
	A~homeomorphism
	$\varphi\colon\Omega\to\Omega'$
	induces a~bounded composition operator 
	\begin{equation*}
		\varphi^*\colon L^1_p(\Omega')\to L^1_q(\Omega),
		\quad
		1\leq q \leq p <\infty,
	\end{equation*}
	where
	$\varphi^*(f)=f\circ\varphi$
	for
	$f\in L^1_p(\Omega')$,
	if and only if%
	\footnote{
		Necessity is proved in~\cite{VodUhl1998, VodUhl2002} 
		(see also earlier work~\cite{Ukh1993}), 
		and sufficiency,
		in Theorem~6 of~\cite{Vod2012}.
	}
	the following conditions hold{\rm:}
	\begin{enumerate}
	\item
		$\varphi \in W^1_{q, \loc}(\Omega)${\rm;}
	\item
		the mapping~%
		$\varphi$
		has finite distortion{\rm;}
	\item
		$K_{\varphi,p}(\cdot) \in L_{\varkappa}(\Omega)$,
		where
		$\frac{1}{\varkappa}=\frac{1}{q}-\frac{1}{p}$, $1\leq q \leq p <\infty$
		$($and
		$\varkappa=\infty$
		for
		$q=p$$)$.
	\end{enumerate}

	Moreover,
	\begin{equation*}
		\|\varphi^*\|\leq\|K_{\varphi,p}(\cdot) \mid L_{\varkappa}(\Omega)\|
		\leq C \|\varphi^*\|
	\end{equation*}
	for some constant~%
	$C$.
\end{theorem}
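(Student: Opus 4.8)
The plan is to prove the two implications separately and to keep track of constants so as to recover the two-sided estimate on $\|\varphi^*\|$ at the end. I would begin with \emph{sufficiency}: assume conditions (1)--(3) hold and take $f\in L^1_p(\Omega')\cap\Lip_{\loc}(\Omega')$. The chain rule for the composition of a Lipschitz function with a Sobolev mapping gives $D(f\circ\varphi)(x)=Df(\varphi(x))\,D\varphi(x)$ for a.e.\ $x$, so that $|D(f\circ\varphi)(x)|\le|Df(\varphi(x))|\,|D\varphi(x)|$. On the zero set $Z$ the finite-distortion hypothesis forces $D\varphi=0$ a.e., so the integral of $|D(f\circ\varphi)|^q$ lives on $\Omega\setminus Z$, where $|D\varphi(x)|=K_{\varphi,p}(x)\,|J(x,\varphi)|^{1/p}$. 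I would then apply Hölder's inequality with the conjugate exponents $p/q$ and $p/(p-q)$ to
\[
\int_{\Omega\setminus Z}|Df(\varphi(x))|^q|D\varphi(x)|^q\,dx
=\int_{\Omega\setminus Z}\bigl(|Df(\varphi(x))|^q|J(x,\varphi)|^{q/p}\bigr)\,K_{\varphi,p}^q(x)\,dx,
\]
producing the product of $\bigl(\int_{\Omega\setminus Z}|Df(\varphi(x))|^p|J(x,\varphi)|\,dx\bigr)^{q/p}$ and $\bigl(\int K_{\varphi,p}^{\varkappa}\,dx\bigr)^{(p-q)/p}$, where the bookkeeping $q\cdot\frac{p}{p-q}=\varkappa$ and $\varkappa\cdot\frac{p-q}{p}=q$ is exactly the relation $\frac1\varkappa=\frac1q-\frac1p$.

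The first factor collapses, via the change-of-variables (area) formula for the injective map $\varphi$, to $\|Df\|_{L_p(\Omega')}^{q}$, while the second is $\|K_{\varphi,p}\|_{L_\varkappa(\Omega)}^{q}$. This yields $\|D(f\circ\varphi)\|_{L_q(\Omega)}\le\|K_{\varphi,p}\|_{L_\varkappa}\|Df\|_{L_p(\Omega')}$, i.e.\ boundedness with the upper estimate $\|\varphi^*\|\le\|K_{\varphi,p}\|_{L_\varkappa}$. The point needing care is the validity of the area formula, for which I would invoke the Luzin $N$-property of homeomorphisms with finite distortion in $W^1_{q,\loc}$ (so that $|\varphi(Z)|=0$ and $\varphi$ carries $\Omega\setminus Z$ onto $\Omega'$ up to a null set). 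The borderline case $q=p$, $\varkappa=\infty$, is identical with the second factor replaced by $\|K_{\varphi,p}\|_{L_\infty}$.

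For \emph{necessity}, suppose $\varphi^*$ is bounded. Membership $\varphi\in W^1_{q,\loc}(\Omega)$ I would obtain by testing the operator on functions agreeing with the coordinate functions $y\mapsto y_i$ on a ball compactly contained in $\Omega'$: for such $f$ one has $f\circ\varphi=\varphi_i$ on the preimage, so $\varphi^*f\in L^1_q$ forces $D\varphi_i\in L_q$ locally, and continuity of the homeomorphism supplies local integrability of $\varphi$ itself. That $\varphi$ has finite distortion is immediate from Lemma~\ref{lem:FD}. The substantive step is the \emph{lower} bound $\|K_{\varphi,p}\|_{L_\varkappa}\le C\|\varphi^*\|$, which simultaneously certifies $K_{\varphi,p}\in L_\varkappa$. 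Here I would use the set-function method: associate with $\varphi^*$ the monotone function
\[
\Phi(U)=\bigl\|\varphi^*|_U\bigr\|^{\varkappa},\qquad \varphi^*|_U\colon L^1_p(U)\to L^1_q(\varphi^{-1}(U)),
\]
on open sets $U\subseteq\Omega'$. The exponent $\varkappa$ is precisely the one suggested by the sufficiency computation that makes $\Phi$ countably (quasi-)additive; boundedness of $\varphi^*$ makes $\Phi$ finite and bounded by $\|\varphi^*\|^{\varkappa}$. One then extends $\Phi$ to a finite Borel measure and, by the Lebesgue differentiation theorem, obtains a finite Radon--Nikodym density $\Phi'(y)$ a.e.

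The final move is infinitesimal: testing $\Phi$ near a regularity point $y_0=\varphi(x_0)$ with approximately linear functions whose gradient is aligned with the direction realizing $|Df(y_0)\,D\varphi(x_0)|=|Df(y_0)|\,|D\varphi(x_0)|$ shows that $\Phi'$ dominates a dimensional multiple of the pushforward of $K_{\varphi,p}^{\varkappa}$; integrating and changing variables back to $\Omega$ recovers $\|K_{\varphi,p}\|_{L_\varkappa}\le C\|\varphi^*\|$ (the constant absorbing the loss in the alignment/extraction step). I expect this last step to be the main obstacle: establishing the quasi-additivity of $\Phi$ so that differentiation theory applies, controlling the degeneration of the identity $|D\varphi|=K_{\varphi,p}|J|^{1/p}$ on $Z$, and securing both Luzin properties ($N$ and $N^{-1}$) under the borderline regularity $q\ge n-1$ are exactly the technical cores of \cite{VodUhl1998,VodUhl2002,Vod2012}. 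Combining this lower bound with the upper bound from sufficiency gives the stated two-sided estimate for $\|\varphi^*\|$.
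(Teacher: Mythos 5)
This theorem is not proved in the paper at all: it is imported from the literature, with the footnote assigning necessity to \cite{VodUhl1998,VodUhl2002} (and \cite{Ukh1993}) and sufficiency to Theorem~6 of \cite{Vod2012}. Measured against those proofs --- fragments of which the paper itself reproduces --- your outline reconstructs the standard argument correctly in its skeleton. The sufficiency half (chain rule, the identity $|D\varphi|=K_{\varphi,p}|J|^{1/p}$ off $Z$, H\"older with exponents $p/q$ and $p/(p-q)$, then the area formula) is exactly the route of Theorem~6 of \cite{Vod2012}, and your bookkeeping $\varkappa(p-q)/p=q$ is right, giving $\|\varphi^*\|\le\|K_{\varphi,p}\mid L_\varkappa(\Omega)\|$. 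The necessity half via a set function is exactly the method the paper echoes in Lemma~\ref{lem:Phi}, whose exponent $\sigma=\frac{pq}{p-q}$ coincides with your $\varkappa$; and invoking Lemma~\ref{lem:FD} for finite distortion is legitimate within the paper's framework, since that lemma is quoted there as known.

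Two points in your sketch would need repair in a complete write-up. First, defining $\Phi(U)$ as the norm of the restricted operator $L^1_p(U)\to L^1_q(\varphi^{-1}(U))$ does not obviously yield monotonicity or countable additivity: a function in $L^1_p(U)$ need not extend to a larger open set, and functions on disjoint pieces cannot be glued. The correct definition (as in Lemma~\ref{lem:Phi}) takes the supremum over $f\in \overset{\circ}L{}^1_p (U)\cap\Lip(U)$, i.e.\ compactly supported test functions, which can be extended by zero and summed; this is what makes the additivity proof work. Second, in the differentiation step the clean pointwise inequality to aim for is $K_{\varphi,p}^{\varkappa}(x)\le C\,\Phi'(\varphi(x))\,|J(x,\varphi)|$ for a.e.\ $x\in\Omega\setminus Z$, obtained by testing with cut-off coordinate functions on balls $B(\varphi(x),\rho)$ and using approximate differentiability to compare $|\varphi^{-1}(B(\varphi(x),\rho))|$ with $\rho^n/|J(x,\varphi)|$; the Jacobian factor is then exactly absorbed by the change-of-variables inequality $\int_{\Omega\setminus Z}\Phi'(\varphi(x))\,|J(x,\varphi)|\,dx\le\int_{\Omega'}\Phi'(y)\,dy\le\Phi(\Omega')\le\|\varphi^*\|^{\varkappa}$, which closes the estimate $\|K_{\varphi,p}\mid L_\varkappa(\Omega)\|\le C\|\varphi^*\|$. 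Incidentally, that same change-of-variables inequality holds for every Sobolev homeomorphism via a.e.\ approximate differentiability, with no Luzin $N$-property needed --- so your worry in the sufficiency step is unnecessary --- and it also settles the chain-rule caveat, since it shows the preimage of the null set where $Df$ fails to exist meets $\{J\neq 0\}$ in a null set, while finite distortion kills $D\varphi$ a.e.\ on $Z$.
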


\begin{theorem}[{\hspace{-.3pt}\cite[Theorem 6]{Vod2012}}] 
\label{thm:Vod6}
	Assume that
	a~homeomorphism
	$\varphi\colon\Omega\to\Omega'$
	induces a~bounded composition operator
	$\varphi^*\colon L^1_p(\Omega') \to L^1_q(\Omega)$
	for
	$n-1\leq q \leq p \leq \infty$,
	where
	$\varphi^*(f)=f\circ\varphi$
	for
	$f\in L^1_p(\Omega')$
	$($and in the case 
	$p=\infty$
	the mapping 
	$\varphi$
	has finite codistortion$)$.

	Then the inverse mapping
	$\varphi^{-1}$
	induces a~bounded composition operator
	$\varphi^{-1*}\colon L^1_{q'}(\Omega) \to L^1_{p'}(\Omega')$,
	where
	$q'=\frac{q}{q-n+1}$
	and
	$p'=\frac{p}{p-n+1}$,
	and has finite distortion.

	Moreover,
	\begin{equation*}
		\|\varphi^{-1*}\|\leq\|K_{\varphi^{-1},q'}(\cdot) \mid L_{\rho}(\Omega')\|
		\leq
		\|K_{\varphi,p}(\cdot) \mid L_{\varkappa}(\Omega)\|^{n-1},
		\text{ where } 
		\frac{1}{\rho}=\frac{1}{p'}-\frac{1}{q'}.
	\end{equation*}
\end{theorem}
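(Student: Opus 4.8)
The plan is to read both the hypothesis and the desired conclusion through the bidirectional dictionary furnished by Theorem~\ref{thm:bnd_comp_op}, which converts boundedness of a composition operator into three analytic conditions (membership in a local homogeneous Sobolev class, finite distortion, and integrability of the distortion operator function), and then to pass from $\varphi$ to $\varphi^{-1}$ by means of the inverse--regularity statement of Remark~\ref{rem:Vod3-4}. Concretely, I would first apply Theorem~\ref{thm:bnd_comp_op} to the given operator $\varphi^*\colon L^1_p(\Omega')\to L^1_q(\Omega)$ to extract that $\varphi\in W^1_{q,\loc}(\Omega)$, that $\varphi$ has finite distortion, and that $K_{\varphi,p}\in L_{\varkappa}(\Omega)$ with $\frac{1}{\varkappa}=\frac{1}{q}-\frac{1}{p}$, together with the bound $\|K_{\varphi,p}\mid L_{\varkappa}(\Omega)\|\le C\|\varphi^*\|$. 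These are exactly the hypotheses demanded by Remark~\ref{rem:Vod3-4}.

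Second, I would feed these into Remark~\ref{rem:Vod3-4} to obtain the three facts about the inverse homeomorphism: $\varphi^{-1}\in W^1_{p',\loc}(\Omega')$ with $p'=\frac{p}{p-n+1}$; that $\varphi^{-1}$ has finite distortion; and that $K_{\varphi^{-1},q'}\in L_{\varrho}(\Omega')$ with $q'=\frac{q}{q-n+1}$ and $\frac{1}{\varrho}=\frac{n-1}{q}-\frac{n-1}{p}$, together with the key inequality $\|K_{\varphi^{-1},q'}\mid L_{\varrho}(\Omega')\|\le\|K_{\varphi,p}\mid L_{\varkappa}(\Omega)\|^{n-1}$.

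Third --- and this is the bookkeeping that makes the two halves dock --- I would verify the exponent identity. Using $\frac{1}{p'}=1-\frac{n-1}{p}$ and $\frac{1}{q'}=1-\frac{n-1}{q}$ one computes $\frac{1}{p'}-\frac{1}{q'}=\frac{n-1}{q}-\frac{n-1}{p}=\frac{1}{\varrho}$, so the integrability exponent $\varrho$ produced above coincides with the exponent $\rho$ of the statement, $\frac{1}{\rho}=\frac{1}{p'}-\frac{1}{q'}$. With $\rho=\varrho$ in hand, the three facts about $\varphi^{-1}$ are precisely conditions (1)--(3) of Theorem~\ref{thm:bnd_comp_op} applied to $\varphi^{-1}$, with $q'$ playing the role of the larger index and $p'$ the smaller one (note that $q\le p$ forces $p'\le q'$). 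Hence Theorem~\ref{thm:bnd_comp_op} returns the boundedness of $\varphi^{-1*}\colon L^1_{q'}(\Omega)\to L^1_{p'}(\Omega')$ with $\|\varphi^{-1*}\|\le\|K_{\varphi^{-1},q'}\mid L_{\rho}(\Omega')\|$, and chaining this with the inequality from Remark~\ref{rem:Vod3-4} yields the full estimate claimed in the theorem.

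The step I expect to require the most care is the treatment of the two boundary exponents, since Theorem~\ref{thm:bnd_comp_op} is stated only for a finite upper index. When $q=n-1$ one has $q'=\infty$, and when $p=\infty$ one has $p'=1$; in the former case the concluding reapplication of Theorem~\ref{thm:bnd_comp_op} to $\varphi^{-1}$ cannot be quoted verbatim, and in the latter case one can no longer harvest finite distortion of $\varphi$ from operator boundedness. Here I would instead lean directly on Theorem~\ref{thm:Vod3-4}, whose hypotheses and conclusions explicitly admit $q=n-1$ and $p=\infty$, and on the extra ``finite codistortion'' assumption that is built into the $p=\infty$ case precisely to supply what the distortion--integrability condition can no longer guarantee; checking that the limiting equality $\|K_{\varphi^{-1},q'}\mid L_{\varrho}(\Omega')\|=\|\mathcal{K}_{\varphi,p}\mid L_{\varrho}(\Omega)\|$ of Theorem~\ref{thm:Vod3-4} degenerates correctly at these endpoints is the only genuinely delicate point.
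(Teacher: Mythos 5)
The paper itself offers no proof of this statement: it is imported verbatim from \cite[Theorem~6]{Vod2012}, so there is no internal argument to compare yours against. Judged on its own terms, your reconstruction is correct in the main range $n-1<q\le p<\infty$ and follows the natural route through the machinery the paper quotes: Theorem~\ref{thm:bnd_comp_op} (necessity) converts the hypothesis into $\varphi\in W^1_{q,\loc}(\Omega)$, finite distortion, and $K_{\varphi,p}\in L_{\varkappa}(\Omega)$; Remark~\ref{rem:Vod3-4} transfers these to $\varphi^{-1}$ with the bound $\|K_{\varphi^{-1},q'}\mid L_{\varrho}(\Omega')\|\le\|K_{\varphi,p}\mid L_{\varkappa}(\Omega)\|^{n-1}$; your exponent identity $\frac{1}{p'}-\frac{1}{q'}=\frac{n-1}{q}-\frac{n-1}{p}$ is right, so $\varrho=\rho$; and Theorem~\ref{thm:bnd_comp_op} (sufficiency) applied to $\varphi^{-1}$ with indices $(q',p')$ returns boundedness of $\varphi^{-1*}$ with $\|\varphi^{-1*}\|\le\|K_{\varphi^{-1},q'}\mid L_{\rho}(\Omega')\|$, which chains to the claimed inequality.

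Two caveats. First, a circularity flag: the paper's own footnote attributes the sufficiency half of Theorem~\ref{thm:bnd_comp_op} to precisely Theorem~6 of \cite{Vod2012}, i.e.\ to the statement under proof. So your argument is a valid derivation of the inverse-mapping assertion from the composition-operator characterization, but it is not an independent proof of the cited theorem; the final step silently re-invokes part of it. Second, the endpoints are flagged but not actually closed. At $q=n-1$ (so $q'=\infty$, $\rho=p'$) your fallback, Theorem~\ref{thm:Vod3-4}, yields only the Sobolev regularity and distortion bounds for $\varphi^{-1}$, not the operator boundedness the conclusion demands; what closes that gap is the elementary pointwise estimate $|D(f\circ\varphi^{-1})|\le\|f\mid L^1_{\infty}(\Omega)\|\,|D\varphi^{-1}|$ a.e.\ for Lipschitz $f$, combined with $K_{\varphi^{-1},\infty}=|D\varphi^{-1}|\in L_{p'}(\Omega')$ (finite distortion of $\varphi^{-1}$ makes these agree a.e.). Similarly, at $p=\infty$ you should say how the hypotheses of Theorem~\ref{thm:Vod3-4} are verified: $\varphi\in W^1_{q,\loc}(\Omega)$ follows by testing $\varphi^*$ on the coordinate functions, and $\mathcal{K}_{\varphi,\infty}=|\Adj D\varphi|\le c_n|D\varphi|^{n-1}\in L_{q/(n-1)}(\Omega)$, after which the finite-codistortion hypothesis supplies condition~2. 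With these points made explicit, the proof is complete.
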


Recall that a differential 
$(n-1)$-form 
$\omega$
on 
$\Omega'$
is defined as
\begin{equation*}
	\omega(y) = \sum\limits_{k=1}^{n} a_k(y) \, dy_1 \wedge dy_2 \wedge \ldots \wedge
	\widehat{dy_k}
	\wedge \ldots \wedge dy_n. 
\end{equation*}
A form 
$\omega$,
with measurable coefficients
$a_k$,
belongs to 
$\mathcal{L}_p (\Omega', \Lambda^{n-1})$
if
\begin{equation*}
	\|\omega \mid \mathcal{L}_p (\Omega', \Lambda^{n-1})\| = 
	\bigg( \int\limits_\Omega \bigg(\sum\limits_{k=1}^{n} a^2_k(y)\bigg)^{p/2} \,dy\bigg)^{1/p}
	< \infty.
\end{equation*}

Let 
$f = (f_1, \ldots, f_n) \colon \Omega \to \Omega'$
belongs to 
$W^1_{q(n-1), \loc} (\Omega)$
and 
$\omega$
be a smooth
$n-1$-form. 
Then the pullback
$\varphi^*\omega$ 
can be written as
\begin{equation*}
	f^*\omega(x) = \sum\limits_{k=1}^{n} a_k(f(x)) \, 
	df_1 \wedge \ldots \wedge
	\widehat{df_k}
	\wedge \ldots \wedge df_n.
\end{equation*}

For any 
$\omega \in \mathcal{L}_p (\Omega', \Lambda^{n-1})$
the pullback operator
$\tilde{f}^* \omega(x)$ 
is defined by continuity \cite[Corollary 1.1]{Vod2010}:
\begin{equation}\label{pullback_operator}
	\tilde{f}^* \omega(x) =
	\begin{cases}
		f^* \omega(x), \qquad & \text{if } x \in \Omega \setminus (Z \cup \Sigma),\\
		0, & \text{otherwise}.
	\end{cases}
\end{equation}

As consequence of \cite[Theorem 1.1]{Vod2010} we can obtain 

\begin{theorem}\label{thm:bounded_pullback}
	A homeomorphism
	$f\colon\Omega\to \Omega'$
	induces a bounded pullback operator
	$\tilde{f}^* \colon \mathcal{L}_p(\Omega', \Lambda^{n-1}) \to \mathcal{L}_q(\Omega, \Lambda^{n-1})$,
	$1 \leq q \leq p \leq \infty$,
	if and only if{\rm:}
	\begin{enumerate}
		\item
			$f\colon\Omega\to \Omega'$
			has finite codistortion{\rm;}
		\item
			$\mathcal{K}_{f,p(n-1)} \in L_\varkappa (\Omega)$
			where 
			$\frac{1}{\varkappa} = \frac{1}{q} - \frac{1}{p}$.
	\end{enumerate}
	Moreover,
	the norm of the operator
	$\tilde{f}^*$
	is comparable with
	$\|\mathcal{K}_{f,p(n-1)} \mid L_\varkappa (\Omega) \|$.
\end{theorem}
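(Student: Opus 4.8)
The plan is to treat Theorem~\ref{thm:bounded_pullback} as the exact $(n-1)$-form analogue of the composition-operator characterization in Theorem~\ref{thm:bnd_comp_op}, the only structural change being that the role played there by the differential $Df$ and the outer distortion operator function $K_{f,p}$ is now played by the adjugate $\Adj Df$ and the inner distortion operator function $\mathcal{K}_{f,p(n-1)}$. The algebraic fact that makes this substitution work is that, for a smooth $(n-1)$-form $\omega$ with coefficient vector $a=(a_1,\dots,a_n)$, the coefficient vector of the pullback $f^*\omega$ is, up to signs, $\Adj Df(x)$ applied to $a(f(x))$; consequently the pointwise bound
\begin{equation*}
	|f^*\omega(x)| \leq |\Adj Df(x)|\,|a(f(x))|
\end{equation*}
holds a.e. First I would record this identity and the resulting pointwise estimate, which is exactly where the matrix $\Adj Df$, and hence the inner distortion, enters.

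For sufficiency together with the upper bound on the operator norm, I would rewrite, on $\Omega\setminus Z$, the factor $|\Adj Df(x)| = \mathcal{K}_{f,p(n-1)}(x)\,|J(x,f)|^{1/p}$, raise the pointwise estimate to the power $q$ and integrate, and then split the integrand by H\"older's inequality with the pair of exponents $\varkappa/q$ and $p/q$, which are conjugate precisely because $\tfrac{1}{\varkappa}=\tfrac{1}{q}-\tfrac1p$. This isolates $\bigl(\int_\Omega \mathcal{K}_{f,p(n-1)}^{\varkappa}\bigr)^{q/\varkappa}$ from $\bigl(\int_\Omega |J(x,f)|\,|a(f(x))|^{p}\,dx\bigr)^{q/p}$, and a change of variables $y=f(x)$ turns the second factor into $\|\omega \mid \mathcal{L}_p(\Omega',\Lambda^{n-1})\|^{q}$. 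This yields $\|\tilde f^*\|\leq \|\mathcal{K}_{f,p(n-1)} \mid L_\varkappa(\Omega)\|$ on smooth forms, and the passage to all of $\mathcal{L}_p$ is precisely the continuity extension \eqref{pullback_operator}. Finiteness of the codistortion is what guarantees $\Adj Df=0$ a.e. on $Z$, so that the pullback is genuinely zero there and \eqref{pullback_operator} is self-consistent.

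For necessity together with the matching lower bound, I would argue as in Theorem~\ref{thm:bnd_comp_op}: boundedness of $\tilde f^*$ is tested against forms $\omega$ whose coefficient vectors are localized near a Lebesgue point $y_0=f(x_0)$ and aligned with the direction realizing $|\Adj Df(x_0)|$, which recovers the pointwise lower bound for $\mathcal{K}_{f,p(n-1)}$ and hence $\|\mathcal{K}_{f,p(n-1)}\mid L_\varkappa(\Omega)\| \leq C\|\tilde f^*\|$; simultaneously, the impossibility of bounding $\tilde f^*$ when $\Adj Df\neq 0$ on a subset of $Z$ of positive measure forces finite codistortion. Since this is the content of \cite[Theorem 1.1]{Vod2010}, in the paper the cleanest route is to match the notation of that theorem, translating its hypotheses into the language of $\mathcal{K}_{f,p(n-1)}$ via Remark~\ref{rem:connection_distortion}, and quote it.

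The main obstacle is the change of variables in the sufficiency step: with only finite codistortion and $f\in W^1_{q(n-1),\loc}$ assumed, one must justify the area formula $\int_\Omega |J(x,f)|\,h(f(x))\,dx = \int_{\Omega'} h\,dy$ for a homeomorphism, i.e.\ that $f$ is a.e.\ differentiable, has the Luzin $N$-property, and has multiplicity a.e.\ equal to one. Handling the exceptional set $\Sigma$ of non-differentiability appearing in \eqref{pullback_operator}, together with the endpoint cases $q=p$ (so $\varkappa=\infty$) and $p=\infty$, demands the same care and is the technical heart supplied by \cite[Theorem 1.1]{Vod2010}.
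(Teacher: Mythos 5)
Your proposal is correct and takes essentially the same route as the paper: the paper offers no independent argument, obtaining Theorem~\ref{thm:bounded_pullback} purely as a consequence of \cite[Theorem 1.1]{Vod2010}, which is exactly the citation your proposal concludes with. The pointwise adjugate estimate, H\"older split with exponents $\varkappa/q$ and $p/q$, and change-of-variables step you sketch are the internal content of that cited theorem, so nothing further is required.
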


\begin{theorem}[\hspace{-.3pt}{\cite[Theorem 1.3]{Vod2010}}]
\label{thm:pullback_inverse}
	Assume that
	a~homeomorphism
	$\varphi\colon\Omega\to\Omega'$
	belongs to
	$W^1_{n-1,\loc}(\Omega)$
	and
	induces a~bounded pullback operator
	$\widetilde{\varphi}^*\colon \mathcal{L}_p(\Omega',\Lambda^{n-1}) \to \mathcal{L}_q(\Omega, \Lambda^{n-1})$,
	for
	$1 \leq q \leq p \leq \infty$.
	Then the inverse mapping
	$\varphi^{-1} \in W^1_{1,\loc}(\Omega)$
	induces a~bounded pullback operator
	$\widetilde{\varphi}^{-1}{}^*\colon \mathcal{L}_{q'}(\Omega,\Lambda^1) \to \mathcal{L}_{p'}(\Omega, \Lambda^1)$,
	where
	$q'=\frac{q}{q-1}$
	and
	$p'=\frac{p}{p-1}$.
	Moreover,
	the norm of the operator
	$\widetilde{\varphi}^{-1}{}^*$
	is comparable with the norm of
	$\widetilde{\varphi}^*$.
\end{theorem}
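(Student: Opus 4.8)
The plan is to transport the distortion description of $\widetilde{\varphi}^*$ to $\varphi^{-1}$ and then recognise it as a composition-operator condition. First I would apply Theorem~\ref{thm:bounded_pullback} to the hypothesis: the boundedness of $\widetilde{\varphi}^*\colon\mathcal{L}_p(\Omega',\Lambda^{n-1})\to\mathcal{L}_q(\Omega,\Lambda^{n-1})$ is equivalent to $\varphi$ having finite codistortion together with $\mathcal{K}_{\varphi,p(n-1)}\in L_\varkappa(\Omega)$, where $\frac{1}{\varkappa}=\frac{1}{q}-\frac{1}{p}$, the operator norm being comparable to $\|\mathcal{K}_{\varphi,p(n-1)}\mid L_\varkappa(\Omega)\|$. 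On the other side, the pullback of a $1$-form is precomposition with $\varphi^{-1}$ followed by the algebraic action of $D\varphi^{-1}$; on exact forms $\widetilde{\varphi}^{-1}{}^*(df)=d(f\circ\varphi^{-1})$, which exhibits the link with the composition operator $f\mapsto f\circ\varphi^{-1}$. Consequently the boundedness of $\widetilde{\varphi}^{-1}{}^*\colon\mathcal{L}_{q'}(\Omega,\Lambda^1)\to\mathcal{L}_{p'}(\Omega',\Lambda^1)$ is governed by the outer-distortion criterion of Theorem~\ref{thm:bnd_comp_op}, and it suffices to verify that $\varphi^{-1}$ is Sobolev with finite distortion and that $K_{\varphi^{-1},q'}\in L_\varkappa(\Omega')$; indeed $\frac{1}{p'}-\frac{1}{q'}=\frac{1}{q}-\frac{1}{p}=\frac{1}{\varkappa}$, which is exactly the exponent relation demanded there (with the usual conventions at the endpoints $q=1$, $p=\infty$).

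The next step is to secure the regularity and invertibility of the inverse. Since $\varphi$ is a homeomorphism of class $W^1_{n-1,\loc}(\Omega)$ with finite codistortion, I would use its a.e.\ differentiability, the area formula, and the pointwise identities $D\varphi^{-1}(\varphi(x))=(D\varphi(x))^{-1}$ and $J(\varphi(x),\varphi^{-1})=J(x,\varphi)^{-1}$, all valid off a set of measure zero. The finite-codistortion hypothesis is precisely what tames the zero set $Z=\{x: J(x,\varphi)=0\}$: it forces $\Adj D\varphi=0$ a.e.\ on $Z$, which makes $\widetilde{\varphi}^*$ well defined by the continuity extension~\eqref{pullback_operator} and lets one bound $\int_{\Omega'}|D\varphi^{-1}(y)|\,dy$ locally, giving $\varphi^{-1}\in W^1_{1,\loc}(\Omega')$ with finite distortion. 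Once this, finite distortion, and the integrability of $K_{\varphi^{-1},q'}$ below are in hand, a H\"older estimate using the local integrability of $J(\cdot,\varphi^{-1})$ upgrades the regularity to $\varphi^{-1}\in W^1_{p',\loc}(\Omega')$, which is what Theorem~\ref{thm:bnd_comp_op} actually requires.

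It remains to transfer the integrability through a change of variables, paralleling the inverse-distortion computations of Theorems~\ref{thm:Vod3-4} and~\ref{thm:Vod6}. From $|D\varphi^{-1}(\varphi(x))|=|\Adj D\varphi(x)|/J(x,\varphi)$ and the Jacobian relation, and using $1-\frac{1}{q'}=\frac{1}{q}$, I would obtain $K_{\varphi^{-1},q'}(\varphi(x))=|\Adj D\varphi(x)|\,J(x,\varphi)^{-1/q}$. Substituting $y=\varphi(x)$, $dy=J(x,\varphi)\,dx$ in $\int_{\Omega'}K_{\varphi^{-1},q'}^\varkappa\,dy$ yields the integrand $|\Adj D\varphi|^\varkappa\,J(\cdot,\varphi)^{1-\varkappa/q}$, and the relation $\varkappa\big(\frac{1}{q}-\frac{1}{p}\big)=1$ collapses the exponent to $1-\varkappa/q=-\varkappa/p$, so the integrand becomes $\mathcal{K}_{\varphi,p(n-1)}^\varkappa$. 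Hence $\|K_{\varphi^{-1},q'}\mid L_\varkappa(\Omega')\|=\|\mathcal{K}_{\varphi,p(n-1)}\mid L_\varkappa(\Omega)\|$, and chaining this equality with the two norm equivalences from Theorems~\ref{thm:bounded_pullback} and~\ref{thm:bnd_comp_op} gives boundedness of $\widetilde{\varphi}^{-1}{}^*$ with norm comparable to that of $\widetilde{\varphi}^*$.

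I expect the main obstacle to be the regularity step, not the exponent algebra: with only $W^1_{n-1,\loc}$ control, $\varphi$ need not satisfy Luzin's condition (N) and need not be differentiable everywhere, so the delicate point is to justify the area formula and the inverse-derivative identity on the complement of a null set while verifying that the contributions of $Z$ and of the non-differentiability set genuinely vanish. This is exactly where finite codistortion and the continuity definition~\eqref{pullback_operator} of the pullback do the real work, and it is the analytic heart on which the clean norm identity above rests.
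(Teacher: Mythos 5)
First, a point of reference: the paper does not prove this statement at all --- it is imported verbatim as \cite[Theorem 1.3]{Vod2010} --- so your attempt can only be judged on whether it stands on its own. Its formal skeleton is sound: by Theorem~\ref{thm:bounded_pullback} the hypothesis is equivalent to finite codistortion together with $\mathcal{K}_{\varphi,p(n-1)}\in L_\varkappa(\Omega)$, $\tfrac1\varkappa=\tfrac1q-\tfrac1p$; your pointwise identity $K_{\varphi^{-1},q'}(\varphi(x))=|\Adj D\varphi(x)|\,J(x,\varphi)^{-1/q}$ and the exponent collapse $1-\varkappa/q=-\varkappa/p$ are both correct; and the target conclusions ($\varphi^{-1}\in W^1_{p',\loc}$, finite distortion, $K_{\varphi^{-1},q'}\in L_\varkappa$ with equal norm) are exactly what Theorem~\ref{thm:Vod3-4} produces if one substitutes the Sobolev exponent $q(n-1)$ and the distortion exponent $p(n-1)$ there, after which Theorem~\ref{thm:bnd_comp_op} and the identification of $1$-form pullbacks with composition operators finish the job.

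The genuine gap is precisely the step you defer to your final paragraph, and it cannot be waved through. Theorem~\ref{thm:Vod3-4} requires $\varphi\in W^1_{q(n-1),\loc}(\Omega)$, whereas the theorem assumes only $W^1_{n-1,\loc}$ --- strictly weaker whenever $q>1$ --- and boundedness of the pullback operator controls only the $(n-1)\times(n-1)$ minors of $D\varphi$, so no upgrade of Sobolev regularity is available. Your substitute derivation of the inverse's regularity fails on its own terms: (i) a.e.\ differentiability of a homeomorphism is not known at $W^1_{n-1,\loc}$ regularity, and for $\varphi^{-1}$ it presupposes what you are trying to prove; (ii) even granting a.e.\ differentiability, ``$\int_{\Omega'}|D\varphi^{-1}|\,dy$ locally finite $\Rightarrow\varphi^{-1}\in W^1_{1,\loc}$'' is false for homeomorphisms --- a Cantor-staircase-type homeomorphism is differentiable a.e.\ with bounded derivative yet is not absolutely continuous, so Sobolev membership requires proving the ACL property of $\varphi^{-1}$, not an integral bound on a pointwise derivative; (iii) the change of variables behind your norm identity needs the images of the zero set $Z$ and of the non-differentiability set to be null, a Luzin-type property that is not free here. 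Establishing exactly these facts under the weak hypotheses $\varphi\in W^1_{n-1,\loc}$ plus finite codistortion is the entire analytic content of the cited theorem; your argument reduces the theorem to that content and stops, so it is circular rather than a proof. (A minor further point: Theorem~\ref{thm:bnd_comp_op} is stated for $p<\infty$, so your appeal to it does not cover the endpoint $q=1$, where $q'=\infty$, although the statement being proved allows it.)
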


\section{Almost-everywhere injectivity}\label{sec:ae_inj}

It is well known that the limit of homeomorphisms need not be 
homeomorphism or even an injective mapping.
It is illustrated by the simple example of mappings
$\varphi_k(x) = |x|^{k-1} x$
on the punctured unit ball. 
Here we have
the limit mapping
$\varphi_0 (x) \equiv 0$
and injectivity is lost.

Recall that
a~mapping
$\varphi\colon \Omega \to \mathbb{R}^n$
is called {\it injective almost everywhere}
whenever there exists a~negligible set~%
$S$
outside which~%
$\varphi$
is injective.

The sequence of homeomorphisms
$\varphi_k = (\varphi_{k,1}, \varphi_{k,2})\colon [-1,1]^2 \to [-1,1]^2$
of the class 
$W^1_2 ([-1,1]^2)$
with integrable distortion,
such that 
\begin{equation*}
	\begin{aligned}
		& \varphi_{k,1} (x_1,x_2) = 
		\begin{cases}
			2 x_1 \xi_{k} (x_2)  & \text{if } x_1\in[0, \frac{1}{2}], \\
			2(1 - \xi_{k} (x_2))x_1 - (1 - 2 \xi_{k} (x_2)) & \text{if } x_1\in(\frac{1}{2},1],
		\end{cases}\\
		& \varphi_{k,1}( -x_1, x_2) = - \varphi_{k,1}( x_1, x_2), \qquad
		\varphi_{k,1}( x_1, -x_2) = \varphi_{k,1}( x_1, x_2),
	\end{aligned}	
\end{equation*}
and
$\varphi_{k,2} (x_1,x_2) = x_2$,
with
$\xi_{k} (t) = \frac{1+(k-1) t}{2k}$,
shows that injectivity almost everywhere can be lost either.

\begin{figure}[h]
	\includegraphics[width=1\linewidth]{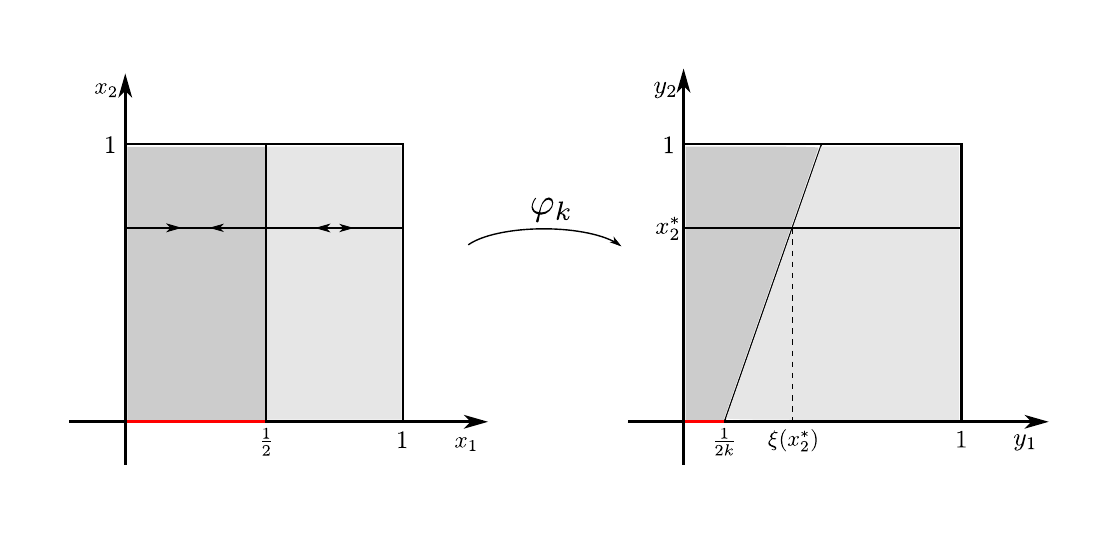}
	\caption{The sequence of homeomorphisms with an almost everywhere injective limit}
\end{figure}

\newpage 

\begin{theorem}\label{thm:ae_injectivity}
	Let
	$\Omega$, 
	$\Omega' \subset \R^n$
	be bounded domains with Lipschitz boundaries.
	Consider
	a~sequence of homeomorphisms
	$\varphi_k$,
	which maps
	$\Omega$
	onto
	$\Omega'$,
	with
	$\varphi_k \in W^1_{n-1, \loc}(\Omega)$,
	and
	$J(x,\varphi_k) \geq 0$
	a.e.,
	such that{\rm:}
	\begin{enumerate}
	\item
		$\varphi_k \rightarrow \varphi_0$
		weakly in~%
		$W^1_{n-1,\loc}(\Omega)$
		with 
		$J(x,\varphi_0) \geq 0$
		a.e.~in
		$\Omega${\rm;}

	\item
		every mapping
		$\varphi_k$
		induces a~bounded pullback operator
		$\widetilde{\varphi}_k^* \colon \mathcal{L}_{\frac{n}{n-1}}(\Omega', \Lambda^{n-1}) \to \mathcal{L}_{\frac{r}{n-1}}(\Omega, \Lambda^{n-1})$
		for some
		$n-1 \leq r \leq n${\rm;}

	\item
		the norms of the operators
		$\| \widetilde{\varphi}_k^* \|$
		are totally bounded.
	\end{enumerate}
	Then the mapping
	$\varphi_0$
	is injective almost everywhere.
\end{theorem}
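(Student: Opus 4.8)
The plan is to produce an almost-everywhere left inverse for $\varphi_0$ by passing to the limit in the inverse homeomorphisms $\psi_k := \varphi_k^{-1}\colon \Omega' \to \Omega$. The dualities of Section~1 convert the hypotheses, which are phrased through the pullback of $(n-1)$-forms by $\varphi_k$, into uniform regularity of the $\psi_k$. Writing $p = \frac{n}{n-1}$ and $q = \frac{r}{n-1}$ in condition~2, Theorem~\ref{thm:pullback_inverse} shows that each $\psi_k$ induces a bounded pullback operator on $1$-forms $\widetilde{\psi}_k^{\,*}\colon \mathcal{L}_{q'}(\Omega,\Lambda^1) \to \mathcal{L}_{p'}(\Omega',\Lambda^1)$ with $q' = \frac{r}{r-n+1}$ and $p' = \frac{p}{p-1} = n$, whose norm is comparable with $\|\widetilde{\varphi}_k^{\,*}\|$. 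Since the pullback of a $1$-form $du$ equals $d(u\circ\psi_k)$, and the $\mathcal{L}$-norm of a $1$-form is the homogeneous Sobolev seminorm of its potential, this pullback operator \emph{is} the composition operator $\psi_k^{*}\colon L^1_{q'}(\Omega) \to L^1_n(\Omega')$, with the same norm. Hence by Theorem~\ref{thm:bnd_comp_op} (and Theorem~\ref{thm:Vod6} for the endpoint $r=n-1$, where $q'=\infty$) every $\psi_k$ lies in $W^1_{n,\loc}(\Omega')$, has finite distortion, and its outer distortion operator function $K_{\psi_k,q'}$ is bounded in the appropriate $L_\varkappa(\Omega')$ by a quantity comparable with $\|\widetilde{\varphi}_k^{\,*}\|$. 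By condition~3 these bounds are uniform in $k$.

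First I would turn this uniform control into convergent limits. The family $\{\psi_k\}$ takes values in the bounded domain $\Omega$, so it is uniformly bounded; combined with the uniform $W^1_n$-distortion bounds it is equicontinuous, so by the Arzel\`a--Ascoli theorem a subsequence converges uniformly on compact subsets of $\Omega'$ to a continuous map $\psi_0\colon \Omega' \to \overline\Omega$. This equicontinuity and convergence is exactly the content of Lemma~\ref{lem:prop_inv}. In parallel, since $n-1 < n$, the weak convergence $\varphi_k \rightharpoonup \varphi_0$ in $W^1_{n-1,\loc}(\Omega)$ is compact into $L^1_{\loc}(\Omega)$ by the Rellich--Kondrachov theorem, so after passing to a further subsequence (not relabeled) we may assume $\varphi_k \to \varphi_0$ almost everywhere in $\Omega$.

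Next I would pass to the limit in the inverse relation $\psi_k(\varphi_k(x)) = x$, valid for every $x\in\Omega$ and every $k$. Fix a point $x$ in the full-measure set on which $\varphi_k(x)\to\varphi_0(x)$ and $\varphi_0(x)\in\Omega'$. Then
\[
	|\psi_0(\varphi_0(x)) - x| \leq |\psi_0(\varphi_0(x)) - \psi_k(\varphi_0(x))| + |\psi_k(\varphi_0(x)) - \psi_k(\varphi_k(x))|.
\]
The first term tends to $0$ by uniform convergence $\psi_k\to\psi_0$ on a compact neighborhood of $\varphi_0(x)$, and the second tends to $0$ by the equicontinuity of $\{\psi_k\}$ together with $\varphi_k(x)\to\varphi_0(x)$. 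Hence $\psi_0(\varphi_0(x)) = x$ for almost every $x\in\Omega$. Consequently, if $x_1\neq x_2$ both lie in this full-measure set and $\varphi_0(x_1)=\varphi_0(x_2)$, then applying $\psi_0$ forces $x_1=x_2$, a contradiction; therefore $\varphi_0$ is injective off a negligible set, which is the assertion.

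The main obstacle is this passage to the limit in the composition $\psi_k(\varphi_k(x))$: it is a genuine double limit in which both the map and its argument move, and it is controlled only by a uniform modulus of continuity for the inverse family $\{\psi_k\}$. This is precisely where condition~3 enters: through the chain of Theorems~\ref{thm:pullback_inverse} and~\ref{thm:bnd_comp_op} the uniform operator-norm bound yields uniform distortion estimates for the $\psi_k$, which in turn furnish the equicontinuity packaged in Lemma~\ref{lem:prop_inv}. A secondary technical point is to guarantee that $\varphi_0(x)$ lies in $\Omega'$ (rather than on $\partial\Omega'$) for almost every $x$, so that $\psi_0(\varphi_0(x))$ is defined and the uniform convergence applies; this exceptional boundary set is handled using the Lipschitz regularity of $\partial\Omega'$ and the fact that $J(x,\varphi_0)\geq 0$ a.e., or alternatively by extending the $\psi_k$ so that the uniform convergence holds up to the boundary.
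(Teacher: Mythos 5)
Your interior argument reproduces the paper's own proof: the conversion of condition~2 into boundedness of the composition operators $\psi_k^*\colon L^1_{r'}(\Omega)\to L^1_n(\Omega')$, $r'=\frac{r}{r-n+1}$, is exactly Remark~\ref{rem:pullback}; the equicontinuity and locally uniform convergence of $\psi_k$ is Lemma~\ref{lem:prop_inv}; the a.e.\ convergence of $\varphi_k$ and the passage to the limit in $\psi_k\circ\varphi_k=\mathrm{id}$ is the paper's concluding step. The genuine gap is in what you dismiss as a ``secondary technical point'': proving that the set $\{x\in\Omega : \varphi_0(x)\in\partial\Omega'\}$ is negligible. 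Your identity $\psi_0\circ\varphi_0(x)=x$ is only available at points whose image lies in the \emph{open} set $\Omega'$, because Lemma~\ref{lem:prop_inv} gives only locally uniform convergence in $\Omega'$ (the oscillation estimate behind it needs balls $B(y',r_0)\subset\Omega'$ and degenerates at the boundary), and neither of your two suggested fixes works. First, $|\partial\Omega'|=0$ together with $J(x,\varphi_0)\geq 0$ a.e.\ says nothing about $|\varphi_0^{-1}(\partial\Omega')|$: a Sobolev limit of homeomorphisms can collapse a set of full measure onto a boundary point while keeping $J\geq 0$ --- this is precisely the example $\varphi_k(x)=|x|^{k-1}x$, $\varphi_0\equiv 0$, opening Section~\ref{sec:ae_inj}, and it is exactly the mechanism by which a.e.\ injectivity fails. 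Second, extending the $\psi_k$ to $\overline{\Omega'}$ with uniform convergence up to the boundary is not available from the hypotheses; the homeomorphisms $\psi_k$ need not even admit continuous extensions to $\overline{\Omega'}$, and no estimate in the paper controls them near $\partial\Omega'$.

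What is actually needed --- and what constitutes the bulk of the paper's Section~\ref{sec:ae_inj} --- is a Luzin $\N^{-1}$-property for the \emph{limit} map up to the boundary. The paper first proves Lemma~\ref{lem:bounded_comp}: by weak compactness and lower semicontinuity of norms, $\varphi_0$ itself induces a bounded composition operator $\varphi_0^*\colon L^1_n(\Omega')\cap\Lip(\Omega')\to L^1_\rho(\Omega)$. It then proves Lemma~\ref{lem:N-1} (via the countably additive set function $\Phi$ of Lemma~\ref{lem:Phi}, the Besicovitch-type covering Lemma~\ref{lem:bez}, and Poincar\'e inequalities) that any such mapping into $\overline{\Omega'}$ satisfies $|\varphi_0^{-1}(E)|=0$ for every null set $E\subset\overline{\Omega'}$ --- the point being that $E$ is allowed to meet $\partial\Omega'$, which the classical statement (Theorem~\ref{thm4}) does not cover. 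Applying this with $E=\partial\Omega'$, which has measure zero by the Lipschitz assumption, gives $|\varphi_0^{-1}(\partial\Omega')|=0$, and only then does your limit identity hold almost everywhere and yield a.e.\ injectivity. Without an argument of this type (or some substitute for Lemmas~\ref{lem:bounded_comp} and~\ref{lem:N-1}), your proof does not exclude the collapse-to-the-boundary scenario, which is the actual content of the theorem.
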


By Theorem~\ref{thm:bounded_pullback} conditions 2 and 3 of Theorem~\ref{thm:ae_injectivity} can be replaced by totally boundedness of inner distortion operator functions 
$\K_{\varphi_k,n}$ 
in
$L_{\varrho}$
with
$\varrho = \frac{rn}{(n-1)(n-r)} \geq n$.

\begin{corollary}\label{cor:ae_injectivity_1}
	Let
	$\Omega$, 
	$\Omega' \subset \R^n$
	be bounded domains with Lipschitz boundaries.
	Consider
	a~sequence of homeomorphisms of finite distortion
	$\varphi_k$,
	which maps
	$\Omega$
	onto
	$\Omega'$,
	with
	$\varphi_k \in W^1_{n-1, \loc}(\Omega)$,
	and
	$J(x,\varphi_k) \geq 0$
	a.e.,
	such that{\rm:}
	\begin{enumerate}
	\item
		$\varphi_k \rightarrow \varphi_0$
		weakly in~%
		$W^1_{n-1,\loc}(\Omega)$
		with 
		$J(x,\varphi_0) \geq 0$
		a.e.~in
		$\Omega${\rm;}

	\item
		the norms of inner distortion operator functions 
		$\|\K_{\varphi_k,n} \mid L_{\varrho}\|$
		are totally bounded
		for some
		$\varrho \geq n$.
	\end{enumerate}
	Then the mapping
	$\varphi_0$
	is injective almost everywhere.
\end{corollary}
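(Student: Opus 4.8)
The plan is to obtain Corollary~\ref{cor:ae_injectivity_1} as a direct consequence of Theorem~\ref{thm:ae_injectivity}: all the analytic substance already lives in that theorem, so the only task is to translate hypothesis~(2) of the corollary, stated in terms of the inner distortion operator functions $\mathcal{K}_{\varphi_k,n}$, into conditions~2 and~3 of the theorem, stated in terms of pullback operators on $(n-1)$-forms. This translation is precisely the content of the characterization Theorem~\ref{thm:bounded_pullback}, and is foreshadowed by the remark immediately following Theorem~\ref{thm:ae_injectivity}.

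First I would fix the exponents. Taking $p=\frac{n}{n-1}$ in Theorem~\ref{thm:bounded_pullback}, one has $p(n-1)=n$, so the relevant distortion operator function is $\mathcal{K}_{\varphi_k,p(n-1)}=\mathcal{K}_{\varphi_k,n}$, which is exactly the quantity controlled by hypothesis~(2). Given the prescribed $\varrho\geq n$, I would then choose the target exponent $q=\frac{r}{n-1}$ so that the relation $\frac{1}{\varkappa}=\frac{1}{q}-\frac{1}{p}$ of Theorem~\ref{thm:bounded_pullback} holds with $\varkappa=\varrho$. Solving $\frac{1}{\varrho}=\frac{n-1}{r}-\frac{n-1}{n}$ gives $r=\frac{(n-1)n\varrho}{n+(n-1)\varrho}$, equivalently $\varrho=\frac{rn}{(n-1)(n-r)}$, which matches the remark after Theorem~\ref{thm:ae_injectivity}. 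As $\varrho$ ranges over $[n,\infty)$ this $r$ ranges over $[n-1,n)$; in particular $n-1\leq r\leq n$ and $1\leq q\leq p$, so the exponents are admissible for both theorems.

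Next I would verify the two hypotheses of Theorem~\ref{thm:bounded_pullback} for each $\varphi_k$. Finite codistortion follows from the assumed finite distortion: the finite distortion condition forces $D\varphi_k=0$ almost everywhere on the zero set $Z$ of the Jacobian, and since the entries of $\Adj D\varphi_k$ are $(n-1)\times(n-1)$ minors of $D\varphi_k$, this yields $\Adj D\varphi_k=0$ a.e.\ on $Z$, i.e.\ finite codistortion. The integrability requirement $\mathcal{K}_{\varphi_k,n}\in L_{\varkappa}=L_{\varrho}$ is hypothesis~(2) itself. Hence each $\varphi_k$ induces a bounded pullback operator $\widetilde{\varphi}_k^*\colon\mathcal{L}_{n/(n-1)}(\Omega',\Lambda^{n-1})\to\mathcal{L}_{r/(n-1)}(\Omega,\Lambda^{n-1})$, and the comparability clause of Theorem~\ref{thm:bounded_pullback} gives $\|\widetilde{\varphi}_k^*\|$ comparable to $\|\mathcal{K}_{\varphi_k,n}\mid L_{\varrho}\|$ with a constant depending only on $n,p,q$. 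Since those distortion norms are totally bounded by hypothesis~(2), the operator norms $\|\widetilde{\varphi}_k^*\|$ are totally bounded as well, which is exactly conditions~2 and~3 of Theorem~\ref{thm:ae_injectivity}.

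Finally, condition~1 of Theorem~\ref{thm:ae_injectivity} is verbatim hypothesis~(1) of the corollary, and conditions~2 and~3 have just been established, so applying Theorem~\ref{thm:ae_injectivity} gives the almost-everywhere injectivity of $\varphi_0$. The main point requiring care is the exponent bookkeeping of the second paragraph: one must confirm that the $\varrho\mapsto r$ correspondence keeps $r$ inside the admissible window $[n-1,n]$ and that the comparability constant of Theorem~\ref{thm:bounded_pullback} is dimensional, hence uniform in $k$, so that total boundedness transfers. There is no genuine analytic obstacle at the level of the corollary, since the difficult steps — passing to the uniformly convergent inverses and identifying an almost-everywhere left inverse of $\varphi_0$ — are all absorbed into Theorem~\ref{thm:ae_injectivity}.
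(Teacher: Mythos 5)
Your proposal is correct and takes essentially the same route as the paper: the authors obtain Corollary~\ref{cor:ae_injectivity_1} precisely by invoking Theorem~\ref{thm:bounded_pullback} (with $p=\tfrac{n}{n-1}$, $q=\tfrac{r}{n-1}$, so that $\mathcal{K}_{\varphi_k,p(n-1)}=\mathcal{K}_{\varphi_k,n}$ and $\varrho=\tfrac{rn}{(n-1)(n-r)}\geq n$) to replace conditions~2 and~3 of Theorem~\ref{thm:ae_injectivity}, exactly as you do. Your additional checks --- that finite distortion forces finite codistortion on the zero set of the Jacobian, and that the correspondence $\varrho\mapsto r$ keeps $r$ in $[n-1,n)$ --- are the same bookkeeping the paper leaves implicit in the remark following Theorem~\ref{thm:ae_injectivity}.
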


Taking into account 
$\|\K_{\varphi,n} \mid L_{ns}\| = \|K_{I} \mid L_{s}\|^{1/n}$
by Remark~\ref{rem:connection_distortion}
we derive the next assertion.

\begin{corollary}\label{cor:ae_injectivity_2}
	Let
	$\Omega$, 
	$\Omega' \subset \R^n$
	be bounded domains with Lipschitz boundaries.
	Consider
	a~sequence of homeomorphisms of finite distortion
	$\varphi_k$,
	which maps
	$\Omega$
	onto
	$\Omega'$,
	with
	$\varphi_k \in W^1_{n-1, \loc}(\Omega)$,
	and
	$J(x,\varphi_k) \geq 0$
	a.e.,
	such that{\rm:}
	\begin{enumerate}
	\item
		$\varphi_k \rightarrow \varphi_0$
		weakly in~%
		$W^1_{n-1,\loc}(\Omega)$
		with 
		$J(x,\varphi_0) \geq 0$
		a.e.~in
		$\Omega${\rm;}

	\item
		the norms of inner distortion functions 
		$\|K_{I} \mid L_{ns}\|$
		are totally bounded
		for some
		$s \geq 1$%
		\footnote{
			The exponent
			$r$ 
			from Theorem~\ref{thm:ae_injectivity}
			can be expressed as 
			$r=\frac{n (n-1) s}{ns + 1 - s} \geq n-1$
		}.
	\end{enumerate}
	Then the mapping
	$\varphi_0$
	is injective almost everywhere.
\end{corollary}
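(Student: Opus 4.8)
The plan is to prove Theorem~\ref{thm:ae_injectivity} directly; the final statement Corollary~\ref{cor:ae_injectivity_2} (and Corollary~\ref{cor:ae_injectivity_1}) then follow by the translations already indicated in the text, namely Theorem~\ref{thm:bounded_pullback} rewrites conditions~2--3 in terms of $\K_{\varphi_k,n}$ and Remark~\ref{rem:connection_distortion} rewrites $\K_{\varphi_k,n}$ in terms of $K_I$. The central idea is to exploit the exact identity $\varphi_k^{-1}\circ\varphi_k=\mathrm{id}$ and pass to the limit: if the inverses $\psi_k:=\varphi_k^{-1}$ converge locally uniformly to a continuous $\psi_0$ while $\varphi_k\to\varphi_0$ a.e., then $\psi_0\circ\varphi_0=\mathrm{id}$ on a set of full measure, which forces $\varphi_0$ to be injective there. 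Thus the work splits into (i) extracting uniform regularity of the inverses from the pullback bounds, (ii) upgrading this to locally uniform convergence, and (iii) the limit passage.

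First I would control the inverses. Applying Theorem~\ref{thm:pullback_inverse} to each $\varphi_k$ with $p=\frac{n}{n-1}$ and $q=\frac{r}{n-1}$ (note $1\le q\le p$ since $n-1\le r\le n$), the inverse $\psi_k\in W^1_{1,\loc}(\Omega')$ induces a bounded pullback on $1$-forms $\widetilde{\psi}_k^*\colon\mathcal{L}_{q'}(\Omega,\Lambda^1)\to\mathcal{L}_{p'}(\Omega',\Lambda^1)$ with $p'=\frac{p}{p-1}=n$ and $q'=\frac{q}{q-1}=\frac{r}{r-n+1}\ge n$, of norm comparable to $\|\widetilde{\varphi}_k^*\|$, hence uniformly bounded by condition~3. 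Testing $\widetilde{\psi}_k^*$ on exact forms $dg$, for which $\widetilde{\psi}_k^*(dg)=d(g\circ\psi_k)$, shows that $\psi_k$ induces a bounded composition operator $L^1_{q'}(\Omega)\to L^1_n(\Omega')$; Theorem~\ref{thm:bnd_comp_op} then gives $\psi_k\in W^1_{n,\loc}(\Omega')$ together with a uniform local bound $\int_{K}|D\psi_k|^n\,dy\le C(K)$ on every $K\Subset\Omega'$.

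The main obstacle is step (ii): turning the uniform $W^1_n$ bound into locally uniform convergence — this is precisely the content of Lemma~\ref{lem:prop_inv}. Here I would use that each $\psi_k$ is a homeomorphism onto the bounded set $\Omega$, so it is \emph{diameter-monotone}: for every ball $B\Subset\Omega'$ one has $\operatorname{diam}\psi_k(B)=\operatorname{diam}\psi_k(\partial B)$, since the farthest pair of points of the topological ball $\psi_k(\overline B)$ must lie on its boundary. Feeding this monotonicity and the uniform bound into the standard oscillation (capacity) estimate for monotone $W^1_n$ maps yields, for $B(a,R)\Subset\Omega'$,
\[
\operatorname{diam}\psi_k\big(B(a,\rho)\big)^n\le\frac{C_n}{\log(R/\rho)}\int_{B(a,R)}|D\psi_k|^n\,dy,
\]
a uniform logarithmic modulus of continuity. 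Equicontinuity on compacta together with uniform boundedness ($\psi_k(\Omega')=\Omega$) lets Arzel\`a--Ascoli extract a subsequence $\psi_k\to\psi_0$ locally uniformly, with $\psi_0\colon\Omega'\to\overline\Omega$ continuous.

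Finally I would pass to the limit. By condition~1 and the compact embedding $W^1_{n-1}(K)\hookrightarrow L_{n-1}(K)$ we may assume $\varphi_k\to\varphi_0$ a.e. in $\Omega$. Fix $x$ with $\varphi_k(x)\to\varphi_0(x)$ and $\varphi_0(x)\in\Omega'$; then $\varphi_k(x)$ eventually lies in a fixed compact neighborhood of $\varphi_0(x)$ inside $\Omega'$, so
\[
x=\psi_k(\varphi_k(x))\longrightarrow\psi_0(\varphi_0(x))
\]
by the locally uniform convergence of $\psi_k$ and the continuity of $\psi_0$. Hence $\psi_0\circ\varphi_0=\mathrm{id}$ on $\{\varphi_k\to\varphi_0\}\cap\{\varphi_0\in\Omega'\}$, and there $\varphi_0$ is injective, since $\varphi_0(x)=\varphi_0(x')$ gives $x=\psi_0(\varphi_0(x))=\psi_0(\varphi_0(x'))=x'$. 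The only delicate measure-theoretic point left is that the exceptional set $\{\varphi_0\in\partial\Omega'\}$ be negligible; I would settle this from $|\partial\Omega'|=0$ combined with the no-concentration supplied by the monotone inverses, after which injectivity holds off a null set, which is exactly the assertion of the theorem.
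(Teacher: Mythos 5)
Your steps (i)--(iii) track the paper's own argument quite closely: step (i) is the content of Remark~\ref{rem:pullback} (pullback bounds on $(n-1)$-forms transferred, via Theorem~\ref{thm:pullback_inverse}, to composition-operator bounds $\psi_k^*\colon L^1_{r'}(\Omega)\to L^1_n(\Omega')$ with $r'=\frac{r}{r-n+1}$); step (ii) is Lemma~\ref{lem:prop_inv} (the paper uses Mostow's oscillation estimate where you invoke monotonicity plus the capacity estimate); and step (iii) is the paper's final limit passage through $\psi_k\circ\varphi_k=\mathrm{id}$. One local imprecision: Theorem~\ref{thm:bnd_comp_op} by itself does not give the uniform bound $\int_K|D\psi_k|^n\,dy\leq C(K)$; it bounds the distortion function $K_{\psi_k,r'}$ in $L_{\varrho'}$, and the uniform gradient bound then requires the H\"older step of Lemma~\ref{lem:prop_inv}, using $\frac{n}{r'}\cdot\frac{\varrho'}{\varrho'-n}=1$ and $|\psi_k(B)|\leq|\Omega|$. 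This is fixable within your scheme.

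The genuine gap is your last sentence. The negligibility of the exceptional set $\{x\in\Omega:\varphi_0(x)\in\partial\Omega'\}$ does \emph{not} follow from ``$|\partial\Omega'|=0$ combined with the no-concentration supplied by the monotone inverses.'' Monotonicity of the inverse homeomorphisms supplies no such no-concentration: the maps $\psi_k(y)=|y|^{1/k-1}y$ are monotone $W^1_n$-homeomorphisms of the ball that spread an arbitrarily small ball about the origin over almost the entire image, and correspondingly the limit of $\varphi_k(x)=|x|^{k-1}x$ sends the whole punctured ball to a boundary point of measure zero in the image --- exactly the failure mode your sentence dismisses. What is actually needed is the Luzin $\N^{-1}$-property of the \emph{limit} map $\varphi_0$ for null sets in the \emph{closure} $\overline{\Omega'}$ (note $\varphi_0$ takes values in $\overline{\Omega'}$, not in $\Omega'$), i.e.\ $|\varphi_0^{-1}(E)|=0$ whenever $|E|=0$, $E\subset\overline{\Omega'}$; this is where the uniform operator bounds, rather than topology, must enter. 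In the paper this occupies the bulk of Section~\ref{sec:ae_inj}: Lemma~\ref{lem:bounded_comp} first shows the uniform bounds survive the weak limit, so that $\varphi_0$ itself induces a bounded composition operator $\varphi_0^*\colon L^1_n(\Omega')\cap\Lip(\Omega')\to L^1_\rho(\Omega)$ (Poincar\'e inequality, compact embedding, weak lower semicontinuity of the seminorm); then the monotone countably additive set function of Lemma~\ref{lem:Phi}, the Besicovitch-type covering Lemma~\ref{lem:bez}, and a Poincar\'e capacity estimate are combined in Lemma~\ref{lem:N-1} to yield the $\N^{-1}$-property up to the boundary, and only then does $|\partial\Omega'|=0$ (Lipschitz boundary) give $|\varphi_0^{-1}(\partial\Omega')|=0$. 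Without this block your argument proves injectivity only on $\{x:\varphi_0(x)\in\Omega'\}$, which a priori may omit a set of positive measure.
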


\begin{remark}\label{rem:pullback} 
	As it will be clear from the subsequent, the theorem is valid 
	provided that composition operators
	$\varphi_k^*\colon {L}^1_{n}(\Omega') \to {L}^1_{\rho}(\Omega)$,
	$1 \leq \rho < n$,
	and 
	$\psi_k^*\colon {L}^1_{r'}(\Omega) \to {L}^1_{n}(\Omega')$,
	$n \leq r' \leq \infty$,
	are bounded.
	We combine both conditions in boundedness of pullback operators
	$\widetilde{\varphi}_k^* \colon \mathcal{L}_{\frac{n}{n-1}}(\Omega', \Lambda^{n-1}) \to \mathcal{L}_{\frac{r}{n-1}}(\Omega, \Lambda^{n-1})$,
	$n-1 \leq r \leq n$.
	Indeed,
	if a homeomorphism
	$\varphi$
	induces a~bounded pullback operator
	$\widetilde{\varphi}^*\colon \mathcal{L}_{n/(n-1)}(\Omega',\Lambda^{n-1}) \to 
	\mathcal{L}_{r/(n-1)}(\Omega,\Lambda^{n-1})$
	then 
	by Theorem~{\ref{thm:pullback_inverse}} 
	the inverse mapping
	$\psi = \varphi^{-1}$
	has finite distortion and 
	induces a~bounded pullback operator
	$\widetilde\psi^*\colon \mathcal{L}_{r'}(\Omega,\Lambda^1) \to \mathcal{L}_{n}(\Omega',\Lambda^1)$
	for
	$r'=\frac{r}{r-n+1} \geq n$.
	Moreover, 
	$\|\widetilde\psi^*\| \sim \|\widetilde\varphi^*\|$%
		\footnote{
		$a \sim b$
		means there exist constants
		$C_1$,
		$C_2 > 0$,
		such that
		$C_1 a \leq b \leq C_2 a$
		}.
	As there is a case of $1$-forms, it is the same as boundedness of composition operator
	$\psi^*\colon {L}^1_{r'}(\Omega) \to {L}^1_{n}(\Omega')$
	and 
	$
	\|\psi^*\| = \|\widetilde\psi^*\|
	$.

	Further, in accordance with Theorem~\ref{thm:Vod6}
	an inverse homeomorphism 
	$\varphi = \psi^{-1}$
	has finite distortion and
	induces a~bounded composition operator
	$\varphi^*\colon {L}^1_{n}(\Omega') \to {L}^1_{\rho}(\Omega)$
	for 
	$\rho = \frac{r}{(n-1)^2 - r (n-2)} \geq 1$
	and 
	$\|\varphi^*\|  
	\sim\|\psi^*\|^{n-1}
	\sim\|\widetilde\varphi^*\|^{n-1}$.
\end{remark}

With this background the first thing we have to do is to verify that the limit mapping
$\varphi_0$
induces a bounded composition operator
$\varphi_0^*\colon L^1_n(\Omega')\cap {\Lip}(\Omega')
\to L^1_{\rho}(\Omega)$.

\begin{lemma} \label{lem:bounded_comp}
	If conditions of Theorem~\ref{thm:ae_injectivity}
	are fulfilled,
	then
	the mapping
	$\varphi_0$
	induces a~bounded composition operator
	$\varphi_0^*\colon L^1_n(\Omega')\cap {\Lip}(\Omega')
	\to L^1_{\rho}(\Omega)$, 
	$\rho = \frac{r}{(n-1)^2 - r (n-2)} \geq 1$.
\end{lemma}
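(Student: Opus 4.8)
The plan is to transfer the uniform operator bounds available for the homeomorphisms $\varphi_k$ to the limit $\varphi_0$ by a direct passage to the limit. This is necessary because at this stage $\varphi_0$ is not yet known to be injective, so Theorem~\ref{thm:bnd_comp_op} (which presupposes a homeomorphism) cannot be invoked for $\varphi_0$ itself. First I would use Remark~\ref{rem:pullback}: each $\varphi_k$, inducing a bounded pullback operator $\widetilde{\varphi}_k^*\colon \mathcal{L}_{n/(n-1)}(\Omega',\Lambda^{n-1})\to\mathcal{L}_{r/(n-1)}(\Omega,\Lambda^{n-1})$, also induces a bounded composition operator $\varphi_k^*\colon L^1_n(\Omega')\to L^1_\rho(\Omega)$ with $\rho=\frac{r}{(n-1)^2-r(n-2)}$ and $\|\varphi_k^*\|\sim\|\widetilde{\varphi}_k^*\|^{n-1}$. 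Condition~3 of Theorem~\ref{thm:ae_injectivity} then furnishes a finite constant $C=\sup_k\|\varphi_k^*\|<\infty$, so that for every $f\in L^1_n(\Omega')\cap\Lip(\Omega')$
\[
\|\nabla(f\circ\varphi_k)\mid L_\rho(\Omega)\|\le C\,\|\nabla f\mid L_n(\Omega')\|,\qquad k\in\mathbb{N}.
\]

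Next I would fix such an $f$ and pass to the limit in this inequality. Weak convergence $\varphi_k\to\varphi_0$ in $W^1_{n-1,\loc}(\Omega)$ together with the Rellich--Kondrachov theorem gives, along a subsequence, $\varphi_k\to\varphi_0$ a.e.\ in $\Omega$; since $f$ is continuous and bounded on the bounded set $\Omega'$, dominated convergence yields $f\circ\varphi_k\to f\circ\varphi_0$ both a.e.\ and in $L_{1,\loc}(\Omega)$. The displayed inequality shows the gradients $\nabla(f\circ\varphi_k)$ are bounded in $L_\rho(\Omega)$. For $\rho>1$ the space $L_\rho$ is reflexive, so a further subsequence gives $\nabla(f\circ\varphi_k)\to g$ weakly in $L_\rho(\Omega)$, and testing against $\psi\in C_0^\infty(\Omega)$,
\[
\int_\Omega g\,\psi\,dx=\lim_k\int_\Omega \nabla(f\circ\varphi_k)\,\psi\,dx=-\lim_k\int_\Omega (f\circ\varphi_k)\,\nabla\psi\,dx=-\int_\Omega (f\circ\varphi_0)\,\nabla\psi\,dx
\]
identifies $g=\nabla(f\circ\varphi_0)$ as the weak gradient. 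Hence $f\circ\varphi_0\in L^1_\rho(\Omega)$, and by weak lower semicontinuity of the norm $\|\nabla(f\circ\varphi_0)\mid L_\rho(\Omega)\|\le\liminf_k\|\nabla(f\circ\varphi_k)\mid L_\rho(\Omega)\|\le C\,\|\nabla f\mid L_n(\Omega')\|$, which is precisely the asserted boundedness of $\varphi_0^*$.

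The step I expect to be the main obstacle is the boundary case $\rho=1$ (equivalently $r=n-1$), where $L_1$ is not reflexive and weak compactness may fail, so the naive argument would only yield $f\circ\varphi_0\in BV_\loc(\Omega)$. To close this gap I would invoke the chain rule $\nabla(f\circ\varphi_k)=(Df)(\varphi_k)\,D\varphi_k$ (valid a.e.\ for Lipschitz $f$ and Sobolev $\varphi_k$) to obtain the pointwise bound $|\nabla(f\circ\varphi_k)|\le\Lip(f)\,|D\varphi_k|$. Since $D\varphi_k$ is bounded in $L_{n-1,\loc}(\Omega)$ with $n-1>1$ (for $n\ge 3$), the family $\{|D\varphi_k|\}$ is uniformly integrable on compacta, hence so is $\{|\nabla(f\circ\varphi_k)|\}$; the Dunford--Pettis theorem then supplies weak $L_1$ compactness, after which the same distributional identification and lower semicontinuity complete the argument. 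The genuinely critical instance is $n=2$ with $\rho=1$, where only an $L_{1,\loc}$ bound is at hand and one must additionally exploit the finite-distortion structure of the $\varphi_k$ to exclude a singular part in the limiting gradient measure.
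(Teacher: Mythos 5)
Your proposal is correct and takes essentially the same route as the paper's proof: transfer the uniform bound $\|\varphi_k^*\|\le C$ coming from Remark~\ref{rem:pullback} and condition~3, use a compact Sobolev embedding to obtain a.e.\ convergence and identify the limit of $u\circ\varphi_k$ with $u\circ\varphi_0$, then conclude via weak convergence of the compositions (equivalently, of their gradients) and lower semicontinuity of the $L_\rho$-seminorm. The one place you go beyond the paper is the non-reflexive endpoint $\rho=1$ (i.e.\ $r=n-1$): the paper's proof simply asserts weak convergence of $w_k=u\circ\varphi_k$ in $L^1_\rho(\Omega)$ without distinguishing this case, whereas your Dunford--Pettis argument based on the bound $|\nabla(f\circ\varphi_k)|\le\Lip(f)\,|D\varphi_k|$ and the $L_{n-1,\loc}$-boundedness of $D\varphi_k$ legitimately settles it for $n\ge 3$, and the residual case $n=2$, $r=1$ that you flag as open is left equally unjustified by the paper itself.
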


\begin{proof}
Consider
$u\in L^1_{n}(\Omega')\cap {\Lip}(\Omega')$.
Since 
$\|\varphi_k^*\| \leq C$
by Remark~\ref{rem:pullback},
the sequence
$w_k=\varphi_k^*u=u\circ \varphi_k$
is bounded in
$L^1_{\rho}(\Omega)$.
Using the Poincar\'e inequality and a~compact embedding of Sobolev spaces
(see \cite[Theorem 6.2, 6.30]{Adams1975} for instance),
we obtain a~subsequence with
$w_k\rightarrow w_0$
in
$L_t(\Omega)$
where
$1 < t < \frac{n\rho}{n - \rho}$.
From this sequence,
in turn,
we can extract a~subsequence which converges almost everywhere in~%
$\Omega$.
The same arguments ensure that 
$\varphi_k \to \varphi_0$ a.e.
Then
$w_0(x)=u\circ\varphi_0(x)$
for almost all
$x\in\Omega$.

On the other hand,
since
$w_k$
converges weakly to
$w_0$
in
$L^1_{\rho} (\Omega)$,
we have 
\begin{multline*}
	\|u\circ \varphi_0 \mid L^1_{\rho}(\Omega)\|=\|w_0 \mid L^1_{\rho}(\Omega)\|\leq 
	\varliminf \limits_{k\rightarrow\infty} \|w_k \mid L^1_{\rho}(\Omega)\| 
	\\ = 
	\varliminf \limits_{k\rightarrow\infty} \|\varphi^*_k(u) \mid L^1_{\rho}(\Omega)\|
	\leq \varliminf\limits_{k\rightarrow\infty} \|\varphi_k^*\| \cdot \|u\mid L^1_n(\Omega')\|
	\\ \leq 
	C \cdot \|u \mid L^1_n(\Omega')\|.
\end{multline*}

Thus,
$\varphi_0$
induces a~bounded composition operator
$\varphi_0^*\colon L^1_n(\Omega')\cap {\Lip}(\Omega')
\to L^1_{\rho}(\Omega)$,
and moreover,
$\|\varphi_0^*\|\leq C$.
\end{proof}

Similar we can obtain boundedness of pullback operator 
$\widetilde{\varphi}_0^* \colon \mathcal{L}_{\frac{n}{n-1}}(\Omega', \Lambda^{n-1}) \to \mathcal{L}_{\frac{r}{n-1}}(\Omega, \Lambda^{n-1})$.

\begin{lemma} \label{lem:bounded_pullback}
	If conditions of Theorem~\ref{thm:ae_injectivity}
	are fulfilled,
	then
	the mapping
	$\varphi_0$
	induces a~bounded pullback operator
	$\widetilde{\varphi}_0^* \colon \mathcal{L}_{\frac{n}{n-1}}(\Omega', \Lambda^{n-1}) \to \mathcal{L}_{\frac{r}{n-1}}(\Omega, \Lambda^{n-1})$.
\end{lemma}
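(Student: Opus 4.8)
The plan is to follow the scheme of Lemma~\ref{lem:bounded_comp}, replacing the composition operator on functions by the pullback operator on $(n-1)$-forms and replacing the pointwise identity $w_0=u\circ\varphi_0$ by the weak continuity of the subdeterminants of the differential. Since, by \eqref{pullback_operator}, the operator $\widetilde f^*$ is defined by continuity from smooth forms, it suffices to establish the norm estimate $\|\widetilde\varphi_0^*\omega \mid \mathcal{L}_{r/(n-1)}(\Omega,\Lambda^{n-1})\|\leq C\,\|\omega \mid \mathcal{L}_{n/(n-1)}(\Omega',\Lambda^{n-1})\|$ for a smooth form $\omega=\sum_{i=1}^{n} a_i\,dy_1\wedge\cdots\wedge\widehat{dy_i}\wedge\cdots\wedge dy_n$ and then to pass to arbitrary $\omega$ by density, where $C=\sup_k\|\widetilde\varphi_k^*\|<\infty$ by condition~3 of Theorem~\ref{thm:ae_injectivity}.

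First I would set $\eta_k=\widetilde\varphi_k^*\omega$. By conditions~2 and~3 the forms $\eta_k$ are bounded in $\mathcal{L}_{r/(n-1)}(\Omega,\Lambda^{n-1})$ with $\|\eta_k\|\leq C\,\|\omega\|$, and on $\Omega\setminus(Z_k\cup\Sigma_k)$ their $l$-th coefficients are the functions $\sum_{i} a_i(\varphi_k(x))\,(\Adj D\varphi_k(x))_{il}$, i.e.\ the $(n-1)\times(n-1)$ subdeterminants of $D\varphi_k$ weighted by the continuous coefficients $a_i$ evaluated along $\varphi_k$. Passing to the subsequence from the proof of Lemma~\ref{lem:bounded_comp} (along which $\varphi_k\to\varphi_0$ a.e.) and using reflexivity of $\mathcal{L}_{r/(n-1)}$, I would extract a further subsequence with $\eta_k\rightharpoonup\eta_0$ weakly.

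The crux is the identification $\eta_0=\widetilde\varphi_0^*\omega$, where two facts combine. On one hand, since each $a_i$ is continuous and bounded on $\overline{\Omega'}$ and $\varphi_k\to\varphi_0$ a.e., we have $a_i(\varphi_k(\cdot))\to a_i(\varphi_0(\cdot))$ a.e.\ and in a uniformly bounded way. On the other hand, because $\varphi_k\rightharpoonup\varphi_0$ in $W^1_{n-1,\loc}(\Omega)$ and the exponent $n-1$ is exactly the order of the minors, the entries of $\Adj D\varphi_k$ converge to those of $\Adj D\varphi_0$ in the weak (distributional) sense; this is the classical weak continuity of the subdeterminants of the gradient as null Lagrangians (cf.\ \cite{Ball1977, Resh1982}), which applies once the minors are equiintegrable. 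The latter is furnished by the uniform bound on the inner distortion operator functions $\K_{\varphi_k,n}$ in $L_\varrho$, $\varrho\geq n$, supplied by conditions~2 and~3 via Theorem~\ref{thm:bounded_pullback}, together with the homeomorphism estimate $\int_\Omega J(x,\varphi_k)\,dx\leq|\Omega'|$ and H\"older's inequality. Multiplying the weakly convergent minors by the a.e.\ convergent, uniformly bounded factors $a_i(\varphi_k)$ and invoking the standard weak--times--strong product lemma, I would conclude that the coefficients of $\eta_k$ converge weakly to $\sum_i a_i(\varphi_0)\,(\Adj D\varphi_0)_{il}$, which are exactly the coefficients of $\widetilde\varphi_0^*\omega$ by \eqref{pullback_operator}; hence $\eta_0=\widetilde\varphi_0^*\omega$.

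Finally, weak lower semicontinuity of the norm in $\mathcal{L}_{r/(n-1)}$ yields $\|\widetilde\varphi_0^*\omega\|\leq\varliminf_{k\to\infty}\|\eta_k\|\leq C\,\|\omega\|$, and density of smooth forms together with the continuity definition \eqref{pullback_operator} extends the estimate to all $\omega\in\mathcal{L}_{n/(n-1)}(\Omega',\Lambda^{n-1})$. The main obstacle I anticipate is precisely this identification step: proving the weak continuity of the $(n-1)$-minors at the borderline Sobolev exponent $p=n-1$ and, in the limiting case $r=n-1$ (so $\varrho=n$ and the target space is only $\mathcal{L}_1$), replacing weak $\mathcal{L}_{r/(n-1)}$-compactness by an equiintegrability (Dunford--Pettis) argument so that the product of the minors and the composed coefficients still converges weakly.
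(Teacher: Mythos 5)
Your outline is exactly the realization the paper intends: the text gives no separate proof of Lemma~\ref{lem:bounded_pullback}, saying only that it is obtained ``similarly'' to Lemma~\ref{lem:bounded_comp}, i.e.\ uniform bound, extraction of a weak limit, identification of that limit with $\widetilde{\varphi}_0^*\omega$, lower semicontinuity of the norm. For $n-1<r\leq n$ your argument can be made to work, but the justification of the identification step needs repair. The references you lean on (Ball \cite{Ball1977}, Reshetnyak \cite{Resh1982}) --- like the paper's own Lemma~\ref{lem:weak_cont} --- give weak continuity of $l\times l$ minors under weak $W^1_p$ convergence with $p>l$ (respectively $p=n$), which is not your situation: Theorem~\ref{thm:ae_injectivity} provides only weak $W^1_{n-1,\loc}$ convergence, and $n-1$ is exactly the order of the minors. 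What is true, and what you should prove instead, is that \emph{distributional} convergence of the $(n-1)$-minors holds at this critical exponent unconditionally: write each $(n-1)$-minor as a divergence $\sum_j \pm\,\partial_j\big(\varphi^i\, C_{ij}(D\varphi)\big)$, where the $C_{ij}$ are $(n-2)$-minors (an identity valid for $W^1_{n-1,\loc}$-maps by smooth approximation); the $(n-2)$-minors are bounded in the reflexive space $L_{\frac{n-1}{n-2},\loc}$ and converge weakly there by induction, $\varphi_k\to\varphi_0$ strongly in $L_{n-1,\loc}$ by the Rellich theorem, and the weak-times-strong products pass to the limit. Once this is known, boundedness of $\Adj D\varphi_k$ in $L_{r/(n-1)}(\Omega)$ (apply conditions 2--3 to constant-coefficient forms, or your H\"older argument) together with reflexivity for $r>n-1$ identifies the weak limit, and your product and lower-semicontinuity steps go through.

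The genuine gap is the endpoint $r=n-1$, which Theorem~\ref{thm:ae_injectivity} allows. There the target is $\mathcal{L}_1$, and both the extraction of a weakly convergent subsequence and your weak-times-a.e.-bounded product step require equiintegrability of $\{|\Adj D\varphi_k|\}$; the source you propose for it does not deliver. H\"older gives only
\begin{equation*}
	\int_E |\Adj D\varphi_k|\,dx \;\leq\; \|\K_{\varphi_k,n}\mid L_n(\Omega)\|\,
	\Big(\int_E J(x,\varphi_k)\,dx\Big)^{\frac{n-1}{n}},
\end{equation*}
and $\int_E J(x,\varphi_k)\,dx\leq|\varphi_k(E)|$, which need not be small uniformly in $k$ as $|E|\to 0$ (a homeomorphism may expand sets of small measure); at the borderline $\varrho=n$ the factor $\|\K_{\varphi_k,n}\mid L_n(E)\|$ is not uniformly small either. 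So Dunford--Pettis cannot be invoked, and without equiintegrability the identification argument genuinely breaks: a sequence bounded in $L_1$ and convergent only distributionally, multiplied by a.e.\ convergent bounded factors such as $a_i(\varphi_k)$, need not converge to the product of the limits. To cover $r=n-1$ one needs a different mechanism, e.g.\ lower semicontinuity of the inner distortion coefficient (the route the paper invokes elsewhere, \cite{GehrIwa1999,IwaMar2001,VodMol2016,VodKudr2017}) followed by an application of Theorem~\ref{thm:bounded_pullback} to $\varphi_0$. The paper glosses over this case as well, so your proposal is no less complete than the text; but as written, the endpoint remains unproved.
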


Now we need to consider some regularity properties of the sequence 
$\{\varphi_k\}_{k \in \mathbb{N}}$
which meet the requirements of Theorem~\ref{thm:ae_injectivity}.

\begin{lemma}\label{lem:prop_inv}
	Let conditions of Theorem~\ref{thm:ae_injectivity}
	be fulfilled,
	define a sequence of continuous mappings
	$\psi_k \colon \Omega' \to \Omega$ 
	as
	$\psi_k=\varphi^{-1}_k$.
	Then there exists a~subsequence
	$\{\psi_{k_l}\}_{l\in\mathbb{N}}$
	and a continuous mapping 
	$\psi_0 \colon \Omega' \to \Omega$
	such that
	$\psi_{k_l} \rightarrow \psi_0$
	locally uniformly.
\end{lemma}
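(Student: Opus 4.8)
The plan is to deduce the conclusion from the Arzelà–Ascoli theorem applied to the family $\{\psi_k\}$ of inverse homeomorphisms, for which two ingredients are needed: a uniform bound on the images and equicontinuity on compact subsets of $\Omega'$. The whole difficulty is concentrated in the equicontinuity, and the key is to convert the uniform operator control of hypotheses 2--3 into a uniform modulus of continuity for the $\psi_k$.

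First I would pass from the direct mappings to their inverses. By Remark~\ref{rem:pullback}, each $\psi_k = \varphi_k^{-1}$ has finite distortion and induces a bounded composition operator $\psi_k^*\colon L^1_{r'}(\Omega) \to L^1_n(\Omega')$ with $r' = \frac{r}{r-n+1}\ge n$ and $\|\psi_k^*\| \sim \|\widetilde\varphi_k^*\|$; by condition~3 of Theorem~\ref{thm:ae_injectivity} these norms are uniformly bounded, say by a constant $C$. Theorem~\ref{thm:bnd_comp_op} then gives $\psi_k \in W^1_{n,\loc}(\Omega')$. The crucial quantitative consequence is a uniform Sobolev bound obtained by testing the operator against the Lipschitz coordinate functions $x\mapsto x_i$, for which $\|x_i \mid L^1_{r'}(\Omega)\| = |\Omega|^{1/r'}$. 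Since $\psi_k^*(x_i) = (\psi_k)_i$, this yields
$$
\int_{\Omega'} |D\psi_k|^n \, dy \;\le\; c_n \sum_{i=1}^n \big\|D(\psi_k)_i \mid L_n(\Omega')\big\|^n \;\le\; c_n\, n\, \big(C\,|\Omega|^{1/r'}\big)^n \;=:\; C',
$$
a bound independent of $k$.

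The heart of the argument is equicontinuity. Each $\psi_k$, being a homeomorphism, is a monotone mapping, so I would invoke the classical oscillation estimate for monotone mappings of class $W^1_n$ (see, e.g., \cite{Resh1982, HenKos2014}): for concentric balls $B(y_0,\delta)\subset B(y_0,\rho)\Subset\Omega'$,
$$
\operatorname{osc}\big(\psi_k, B(y_0,\delta)\big) \;\le\; C_n \Big(\log \tfrac{\rho}{\delta}\Big)^{-\frac{n-1}{n}} \Big(\int_{B(y_0,\rho)} |D\psi_k|^n \, dy\Big)^{\frac1n} \;\le\; C_n (C')^{1/n}\,\Big(\log \tfrac{\rho}{\delta}\Big)^{-\frac{n-1}{n}}.
$$
For a fixed compact $K\subset\Omega'$ one chooses $\rho = \tfrac12\operatorname{dist}(K,\partial\Omega')$, so the right-hand side tends to $0$ as $\delta\to 0$ uniformly in $y_0\in K$ and in $k$; this is precisely equicontinuity of $\{\psi_k\}$ on $K$. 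The uniform Sobolev bound $C'$ is what makes this estimate uniform in $k$, which is exactly why the total boundedness hypothesis is indispensable here, and this is the step I expect to be the main obstacle — everything rests on passing from the uniform operator/distortion control to a $k$-uniform modulus of continuity, and on the monotonicity of homeomorphisms so that the $W^1_n$ oscillation lemma applies.

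Finally, the images satisfy $\psi_k(\Omega') = \Omega \subset \overline\Omega$, a fixed bounded set, so the family is uniformly bounded. Exhausting $\Omega'$ by compacts and running a diagonal Arzelà–Ascoli argument produces a subsequence $\{\psi_{k_l}\}_{l\in\mathbb{N}}$ converging locally uniformly to a continuous limit $\psi_0\colon \Omega'\to\overline\Omega$, which gives the claim; the uniform boundedness and the extraction are routine once equicontinuity is in hand.
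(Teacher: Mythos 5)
Your proof is correct and follows the same overall strategy as the paper: uniform boundedness of $\{\psi_k\}$ because $\Omega$ is bounded, equicontinuity from an oscillation estimate for $W^1_n$-mappings combined with a $k$-uniform bound on the $n$-energy, and then Arzel\`a--Ascoli with a diagonal extraction. The one place where you genuinely diverge is the derivation of the uniform energy bound $\int |D\psi_k|^n\,dy \le C'$. The paper obtains it by writing $|D\psi_k|^n = K_{\psi_k,r'}^n(y)\, J(y,\psi_k)^{n/r'}$ off the zero set of the Jacobian, applying H\"older's inequality with exponents $\varrho'/n$ and $\varrho'/(\varrho'-n)$ (where $\tfrac{1}{\varrho'}=\tfrac1n-\tfrac1{r'}$, so that $\tfrac{n}{r'}\cdot\tfrac{\varrho'}{\varrho'-n}=1$), bounding the distortion norm $\|K_{\psi_k,r'} \mid L_{\varrho'}(\Omega')\|$ uniformly via Theorems~\ref{thm:bnd_comp_op} and~\ref{thm:Vod6}, and bounding the resulting Jacobian integral by $|\psi_k(B(y',r_0))| \le |\Omega|$. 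Your testing of the operator $\psi_k^*$ against the coordinate functions $x_i$ --- whose $L^1_{r'}$-seminorm is exactly $|\Omega|^{1/r'}$ and whose pullbacks are the components of $\psi_k$ --- reaches the same conclusion more directly: it avoids the distortion operator function and the change-of-variables step entirely, using only the definition of a bounded composition operator. Both derivations rest on the same input, namely the uniform control $\|\psi_k^*\| \le C$ supplied by Remark~\ref{rem:pullback} and condition~3 of Theorem~\ref{thm:ae_injectivity}, so this is a cleaner route to the same estimate rather than a different proof. One small inaccuracy: the classical oscillation estimate for monotone $W^1_n$-mappings carries the factor $\bigl(\log\tfrac{\rho}{\delta}\bigr)^{-1/n}$ (as in the paper, quoting Moser), not $\bigl(\log\tfrac{\rho}{\delta}\bigr)^{-\frac{n-1}{n}}$; since you use the estimate only qualitatively --- the right-hand side tends to $0$ as $\delta\to0$ uniformly in $k$ --- this slip does not affect the argument.
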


\begin{proof}
	Notice that
	the sequence
	$\psi_k$
	is uniformly bounded
	since the domain 
	$\Omega$
	is bounded.

	On the other hand,
	since
	$\psi_k\in W^1_{n,\loc}(\Omega')$
	(by Remark~\ref{rem:pullback} and Theorem~\ref{thm:bnd_comp_op})
	we obtain the estimate
	(corollary of~\cite[Lemma~4.1]{Mos1968})
	\begin{equation*}
		{\rm osc}(\psi_k,\: S(y',r)) \leq L \biggl(\ln
		\frac{r_0}{r}\biggr)^{-\frac{1}{n}} \Biggl(\,\int\limits_{B(y',r_0)} |D
		\psi_k(y)|^n dy \Biggl)^{\frac{1}{n}},
	\end{equation*}
	where
	$S(y',r)$
	is the sphere of radius
	$r<\frac{r_0}{2}$
	centered at~%
	$y'$
	and
	$B(y',r_0)\subset\Omega'$
	is the ball of radius
	$r_0$
	centered at~%
	$y'$.
	It follows the equicontinuity of the family of functions 
	$\{\psi_k\}_{k\in\mathbb N}$ 
	on any compact part of 
	$\Omega'$.

	H\"older's inequality,
	Theorem~\ref{thm:bnd_comp_op}
	and
	Theorem~\ref{thm:Vod6}
	yield
	\begin{align*}
		\int\limits_{B(y',r_0)}&|D \psi_k (y)|^n dy \leq 
		\int\limits_{B(y',r_0)}\frac{|D \psi_k(y)|^n}{J(y,\psi_k)^{\frac{n}{r'}}} 
		J(y,\psi_k)^{\frac{n}{r'}} \,dy  
		\\ & 
		\leq\Biggl(\,\int\limits_{\Omega'} 
		\Biggl(\frac{|D\psi_k(y)|^n}{J(y,\psi_k)^{\frac{n}{r'}}}\Biggr)^{\frac{\varrho'}{n}}
		\, dy \Biggr)^{\frac{n}{\varrho'}} \Biggl(\,\int\limits_{B(y',r_0)}
		J(y,\psi_k)^{\frac{n}{r'}\cdot\frac{\varrho'}{\varrho' - n}}
		\,dy\Biggr)^{\frac{\varrho' - n}{\varrho'}}  
		\\ &
		\leq \|K_{\psi_k,r'}(\cdot) \mid L_{\varrho'}(\Omega')\|^{n}  
		|\psi_k(B(y',r_0))|^{\frac{\varrho' - n}{\varrho'}} 
		\leq \tilde C^n |\Omega|^{\frac{\varrho' - n}{\varrho'}}, 
	\end{align*}
	where
	$r' = \frac{r}{r-n+1}$,
	$\frac{1}{\varrho'} = \frac{1}{n} - \frac{1}{r'}$,
	and since
	$\frac{n}{r'}\cdot\frac{\varrho'}{\varrho' - n} =1$.

	Thus,
	we see that
	the family
	$\{\psi_k\}_{k\in\mathbb{N}}$
	is equicontinuous and uniformly bounded. 
	By the Arzel\`a--Ascoli theorem
	there exists a~subsequence 
	$\{\psi_{k_l}\}$
	converging uniformly to a~mapping
	$\psi_0$
	as
	$k_l \rightarrow \infty$.
\end{proof}

Now we verify that
the set of points
$x\in\Omega$
with
$\varphi(x) \in \partial\Omega'$
is negligible.
The proof of this statement
is based on some properties of 
additive function
$\Phi$
defined on open bounded sets.
For proving Lemma~\ref{lem:Phi} below
we modify  
the method of proof of~\cite[Theorem 4]{VodUhl2002}.

Given a~bounded open set
$A'\subset\R^n$,
define the class of functions
$\overset{\circ}L{}^1_p (A')$
as the closure of the subspace
$C_0^\infty (A')$
in the seminorm of
$L^1_p (A')$.
In general,
a~function
$f \in \overset{\circ}L{}^1_p (A')$
is defined only on the set~%
$A'$,
but,
extending it by zero,
we may assume that
$f \in L^1_p (\R^n)$.

Let us recall that a mapping
$\Phi$ 
defined on open subsets from 
$\R^n$ 
and taking nonnegative finite values is called a
\textit{monotone} if 
$\Phi(V) \leq \Phi(U)$
for
$V \subset U$
and \textit{countably additive} function of set (see \cite{VodUhl2002}) 
if for any countable set 
$U_i \subset U \subset \R^n$, 
$i = 1$, $2$, $\dots$, $\infty$, 
of pairwise disjoint open sets the following inequality 
\begin{equation*}
	\sum\limits_{i=1}\limits^{\infty} \Phi (U_i) = \Phi \left(\bigcup\limits_{i=1}\limits^{\infty} U_i\right) 
\end{equation*}
takes place.

\begin{lemma}[cf.~Lemma~1 of~\cite{VodUhl2002}]\label{lem:Phi}
	Assume that
	the mapping
	$\varphi\colon \Omega \to \overline{\Omega'}$
	induces a~bounded composition operator
	\begin{equation*}
		\varphi^*\colon L^1_p (\Omega') \cap {\Lip}(\Omega') \to L^1_q (\Omega), 
		\quad 1 \leq q < p \leq \infty.
	\end{equation*}
	Then 
	\begin{equation*}
		\Phi(A') = \sup \limits_{f \in \overset{\circ}L{}^1_p (A') \cap {\Lip}(A')} 
		\Bigg( \frac{\|\varphi^* f \mid L^1_q(\Omega) \|}
		{\|f \mid  L^1_p (A' \cap \Omega') \|} \Bigg)^{\sigma}, \quad
		\sigma = 
		\begin{cases}
			\frac{pq}{p-q} & \quad
			\text{for } p<\infty,
			\\
			q &  \quad
			\text{for } p=\infty,
	\end{cases}
\end{equation*}
is a~bounded monotone countably additive function
defined on the open bounded sets
$A'$
with
$A' \cap \Omega' \neq \emptyset$.
\end{lemma}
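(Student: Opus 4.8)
The plan is to verify the three defining properties in increasing order of difficulty, following \cite[Lemma~1]{VodUhl2002} while carefully tracking the denominator over $A'\cap\Omega'$. I would first note that the quotient under the supremum is continuous in the $L^1_p$-seminorm (its numerator is controlled by boundedness of $\varphi^*$, its denominator by $\|\cdot\mid L^1_p(A'\cap\Omega')\|\le\|\cdot\mid L^1_p(A')\|$), so every supremum below may be computed over the dense subclass $C_0^\infty(A')$ of compactly supported test functions. For such $f$, extension by zero gives $\nabla f=0$ a.e. outside $A'$, whence $\|f\mid L^1_p(\Omega')\|=\|f\mid L^1_p(A'\cap\Omega')\|$; boundedness of $\varphi^*$ then yields $\|\varphi^*f\mid L^1_q(\Omega)\|\le\|\varphi^*\|\,\|f\mid L^1_p(A'\cap\Omega')\|$ and therefore $\Phi(A')\le\|\varphi^*\|^\sigma<\infty$. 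Monotonicity is equally soft: for $V\subset U$ one has $C_0^\infty(V)\subset C_0^\infty(U)$, and for $f$ supported in $V$ the gradient norms over $V\cap\Omega'$ and $U\cap\Omega'$ coincide, so $\Phi(V)$ is a supremum over a smaller admissible family and $\Phi(V)\le\Phi(U)$.

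The substance of the lemma is countable additivity for pairwise disjoint open $U_i$ with $U=\bigcup_i U_i$. The two structural facts I would use are: (i) since $\varphi^*f=f\circ\varphi$, the function $\varphi^*f_i$ is supported in $\varphi^{-1}(U_i)$, and these preimages are pairwise disjoint for any map $\varphi$, so the $L^1_q$-seminorms of the pieces add in the $q$-th power; and (ii) a compactly supported $f\in C_0^\infty(U)$ has support meeting only finitely many $U_i$, hence decomposes as a finite sum $f=\sum_i f_i$ with $f_i=f\cdot\mathbf 1_{U_i}\in\overset{\circ}L{}^1_p(U_i)\cap\Lip$ and disjoint gradient supports. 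For subadditivity I would write $\|\varphi^*f\mid L^1_q\|^q=\sum_i\|\varphi^*f_i\mid L^1_q\|^q\le\sum_i\Phi(U_i)^{q/\sigma}\|f_i\mid L^1_p\|^q$ and apply Hölder with exponents $\tfrac{p}{p-q}$ and $\tfrac{p}{q}$; the algebraic identity $\tfrac{q}{\sigma}\cdot\tfrac{p}{p-q}=1$ (which is exactly why $\sigma=\tfrac{pq}{p-q}$ is the correct power) collapses the bound to $\bigl(\sum_i\Phi(U_i)\bigr)^{q/\sigma}\|f\mid L^1_p\|^q$, giving $\Phi(U)\le\sum_i\Phi(U_i)$.

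For the reverse inequality I would choose, for each $i$ and $\varepsilon>0$, a near-extremal $f_i\in\overset{\circ}L{}^1_p(U_i)\cap\Lip$ and form $f=\sum_{i=1}^N\lambda_i f_i$ with the scaling $\lambda_i^{\,p-q}=a_i/b_i$, where $a_i=\|\varphi^*f_i\mid L^1_q\|^q$ and $b_i=\|f_i\mid L^1_p\|^p$. This choice makes the denominator sum $\sum_i\lambda_i^p b_i$ equal to the numerator sum $\sum_i\lambda_i^q a_i$, and a direct computation gives $\lambda_i^q a_i=a_i^{p/(p-q)}b_i^{-q/(p-q)}$, which is precisely the $\sigma$-th power of the ratio for $f_i$. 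Hence $\bigl(\|\varphi^*f\|/\|f\|\bigr)^\sigma=\sum_{i=1}^N\bigl(\|\varphi^*f_i\|/\|f_i\|\bigr)^\sigma\ge\sum_{i=1}^N\Phi(U_i)-\varepsilon$, so $\Phi(\bigcup_{i\le N}U_i)\ge\sum_{i\le N}\Phi(U_i)$; monotonicity together with $N\to\infty$ then yields $\Phi(U)\ge\sum_i\Phi(U_i)$, completing additivity. The case $p=\infty$ (where $\sigma=q$) runs along the same lines but is simpler, since $\|f\mid L^1_\infty\|=\max_i\|\lambda_i f_i\mid L^1_\infty\|$ replaces the $\ell^p$-sum and one simply normalizes all pieces to equal gradient sup-norm.

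I expect the main obstacle to be the additivity step, and within it the superadditivity: one must exhibit a single test function whose Rayleigh-type quotient recaptures the full sum $\sum_i\Phi(U_i)$, and the tuned scaling $\lambda_i^{\,p-q}\propto a_i/b_i$ is exactly what forces that quotient to be extremal; it is here that the specific value of $\sigma$ is indispensable. The secondary technical points---justifying the reduction to compactly supported $f$, the finiteness of the decomposition, and the genuine (not merely a.e.) disjointness of the supports of $\varphi^*f_i$ near the preimages of $\partial U_i$---are routine but must be handled to make the seminorm additivity rigorous.
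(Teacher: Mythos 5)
Your proof is correct and follows essentially the same route as the paper: the substantial direction (superadditivity) is obtained exactly as in the paper by summing near-extremal test functions with disjoint supports, and your explicit tuning $\lambda_i^{\,p-q}=a_i/b_i$ is equivalent to the paper's normalization $\| f_i \mid \overset{\circ}L{}^1_p (A'_i) \|^p = \Phi(A'_i)\big(1- \frac{\varepsilon}{2^i}\big)$, both of which force the equality case of H\"older's inequality and hence make the combined Rayleigh quotient recover $\sum_i\Phi(A'_i)$. The only real difference is that you also write out the converse (subadditive) inequality via the decomposition $f=\sum_i f_i$ and H\"older with exponents $\tfrac{p}{p-q}$, $\tfrac{p}{q}$, a step the paper states can be ``verified directly by using the definition of $\Phi$'' and omits.
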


\begin{remark}
	If 
	$f \in \overset{\circ}L{}^1_p (A') \cap {\Lip}(A')$
	and
	$A' \not\subset \overline\Omega'$,
	we consider a composition
	$\varphi^* f$
	where it is well defined.
\end{remark}

\begin{proof}[Proof of Lemma \ref{lem:Phi}]
It is obvious that
$\Phi(A'_1)\leq \Phi(A'_2)$
whenever
$A'_1 \subset A'_2$.

Take disjoint sets
$\{A'_i\}_{i\in \mathbb{N}}$
in 
$\Omega'$
and put
$A'_0 = \bigcup\limits_{i=1}^{\infty} A'_i$.
Consider a~function
$f_i \in \overset{\circ}L{}^1_p (A'_i) \cap {\Lip}(A_i')$
such that
the conditions
\begin{equation*}
	\| \varphi^* f_i \mid L^1_q(\Omega) \| 
	\geq \Big(\Phi(A'_i) \Big(1- \frac{\varepsilon}{2^i}\Big)\Big)^{1/\sigma}
	\| f_i \mid \overset{\circ}L{}^1_p (A'_i) \|
\end{equation*}
and 
\begin{align*}
	& \| f_i \mid \overset{\circ}L{}^1_p (A'_i) \|^p = \Phi(A'_i) \Big(1- \frac{\varepsilon}{2^i}\Big)
	\text{ for } p < \infty\\
	& (\| f_i \mid \overset{\circ}L{}^1_p (A'_i) \|^p = 1
	\text{ for } p=\infty)
\end{align*}
hold simultaneously
where
$0< \varepsilon <1$.
Putting
$f_N = \sum \limits_{i=1} \limits ^{N} f_i \in L^1_p(\Omega') \cap {\Lip}(\Omega')$,
and applying H\"older's inequality in
the case of equality%
\footnote{Let us remind that for 
	$a_i$,
	$b_i \geq 0$,
	$\frac{1}{k} + \frac{1}{k'} = 1$,
	$
		\left|\sum a_i b_i\right| = \big(\sum a_i^k \big)^{1/k} \big(\sum b_i^{k'}\big)^{1/{k'}}
	$
	if and only if 
	$a_i^k$
	and 
	$b_i^{k'}$
	are proportional.},
we obtain
\begin{multline*}
	\|\varphi^* f_N \mid L^1_q(\Omega) \| \geq 
	\bigg( \sum \limits_{i=1}\limits^{N}
	\Big(\Phi(A'_i) \Big(1- \frac{\varepsilon}{2^i}\Big)\Big)^{\frac{q}{\sigma}}
	\big\| f_i | \overset{\circ}L{}^1_p (A'_i) \big\| ^q\bigg)^{\frac{1}{q}}
	\\ = 
	\bigg( \sum \limits_{i=1}\limits^{N}
	\Phi(A'_i) \Big(1- \frac{\varepsilon}{2^i}\Big)\bigg)^{\frac{1}{\sigma}}
	\bigg\| f_N \mid \overset{\circ}L{}^1_p \Big(\bigcup\limits_{i=1}\limits^{N} A'_i\Big) 
	\bigg\|
	\\ \geq 
	\bigg( \sum \limits_{i=1}\limits^{N}
	\Phi(A'_i) - \varepsilon \Phi(A'_0) \bigg)^{\frac{1}{\sigma}}
	\bigg\| f_N \mid \overset{\circ}L{}^1_p \Big(\bigcup\limits_{i=1}\limits^{N} A'_i\Big) \bigg\|,
\end{multline*}
since the sets
$A_i$,
on which the functions
$\nabla \varphi^* f_i$
are nonvanishing,
are disjoint.
This implies that
\begin{equation*}
	\Phi(A'_0)^{\frac{1}{\sigma}} \geq \sup 
	\frac{\| \varphi^* f_N  \mid L^1_p (\Omega) \|}
	{\Big\| f_N \mid \overset{\circ}L{}^1_p \Big(\bigcup\limits_{i=1}\limits^{N} A'_i\Big) 
	\Big\|}
	\\ \geq 
	\bigg( \sum \limits_{i=1}\limits^{N}
	\Phi(A'_i) - \varepsilon \Phi(A'_0) \bigg)^{\frac{1}{\sigma}},
\end{equation*}
where we take the sharp upper bound over all functions
\begin{equation*}
	f_N \in \overset{\circ}L{}^1_p 
	\Big(\bigcup\limits_{i=1}\limits^{N} A'_i\Big) \cap {\Lip}
	\Big(\bigcup\limits_{i=1}\limits^{N} A'_i\Big),
	\quad
	f_N = \sum \limits_{i=1} \limits ^{N} f_i,
\end{equation*}
and
$f_i$
are of the form indicated above.
Since
$N$~%
and~%
$\varepsilon$
are arbitrary,
\begin{equation*}
	\sum\limits_{i=1}\limits^{\infty} \Phi(A'_i) \leq \Phi 
	\Big(\bigcup\limits_{i=1}\limits^{\infty} A'_i\Big).
\end{equation*}

We can verify the inverse inequality directly
by using the definition of~%
$\Phi$.
\end{proof}

For estimating  
$\Phi$
through multiplicity of covering,
we need the following corollary to the Bezikovich theorem
(see \cite[Theorem~1.1]{Gus1978} for instance).

\begin{lemma} \label{lem:bez}
	For every open set
	$U \subset \R^n$
	with
	$U\neq \R^n$,
	there exists a~countable family
	$\mathcal{B}=\{B_j\}$
	of balls such that
	\begin{enumerate}
	\item 
		$\bigcup\limits_{j} B_j = U${\rm;}

	\item
		if
		$B_j=B_j(x_j,r_j) \in \mathcal{B}$
		then
		${\rm dist} (x_j, \partial U) = 12 r_j${\rm;}

	\item
		the families
		$\mathcal{B} = \{B_j\}$
		and
		$2 \mathcal{B} = \{2B_j\}$,
		where the symbol
		$2B$
		stands for the ball of doubled radius centered at the same point,
		constitute a covering of finite multiplicity %
		of~%
		$U${\rm;}

	\item
		if the balls
		$2 B_j = B_j(x_j, 2 r_j)$, 
		$j=1$,
		$2$,
		intersect
		then
		$\frac{5}{7} r_1 \leq r_2 \leq \frac{7}{5} r_1${\rm;}

	\item
		we can subdivide the family
		$\{2B_j\}$
		into finitely many tuples
		so that
		in each tuple the balls are disjoint
		and the number of tuples depends only on the dimension~%
		$n$.
	\end{enumerate}
\end{lemma}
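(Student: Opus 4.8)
The plan is to realize $\mathcal{B}$ as a Besicovitch subcover of the Whitney-type family attached to the Lipschitz radius function $\rho(x) = \frac{1}{12}\,{\rm dist}(x,\partial U)$, and then to transfer all the quantitative control from the selected balls to their doubles by exploiting that $\rho$ varies slowly. First I would set, for each $x \in U$, $\rho(x) = \frac{1}{12}\,{\rm dist}(x,\partial U)$; since $U$ is open and $U \neq \R^n$, this is finite and strictly positive on $U$, and as ${\rm dist}(\cdot,\partial U)$ is $1$-Lipschitz, $\rho$ is $\frac{1}{12}$-Lipschitz. Each ball $B(x,\rho(x))$ satisfies ${\rm dist}(x,\partial U) = 12\rho(x)$, which is exactly property~2 for any subfamily later selected, and even its double $2B(x,\rho(x))$ lies in $U$ because $2\rho(x) = \frac{1}{6}\,{\rm dist}(x,\partial U) < {\rm dist}(x,\partial U)$. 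Property~4 is then a slow-variation estimate that I would prove for \emph{any} two balls of this family whose doubles meet: if $2B(x_1,r_1) \cap 2B(x_2,r_2) \neq \emptyset$ with $r_i = \rho(x_i)$, then $|x_1 - x_2| \le 2r_1 + 2r_2$, whence $|r_1 - r_2| \le \frac{1}{12}|x_1-x_2| \le \frac{1}{6}(r_1+r_2)$, which rearranges to $\frac{5}{7}r_1 \le r_2 \le \frac{7}{5}r_1$. Thus property~4 holds automatically, independently of the selection.

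Next I would extract the family itself. The collection $\{B(x,\rho(x))\}_{x \in U}$ covers $U$ (each point is a center), so applying the Besicovitch covering theorem in the cited form \cite[Theorem~1.1]{Gus1978} yields a countable subfamily $\mathcal{B} = \{B_j = B(x_j, r_j)\}$ still covering $U$ and of bounded multiplicity, i.e.\ no point lies in more than $N(n)$ of the $B_j$. Since each $B_j \subset U$ while the centers exhaust $U$, this gives $\bigcup_j B_j = U$ (property~1). If $U$ is unbounded the radii $\rho$ need not be bounded; I would first split $U$ into the dyadic layers $\{2^k \le {\rm dist}(\cdot,\partial U) < 2^{k+1}\}$, apply the theorem on each layer, where the radii are comparable, and recombine, only finitely many layers being relevant near any point.

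The heart of the argument, and the step I expect to cost the most, is upgrading the control from $\mathcal{B}$ to the doubled family $2\mathcal{B}$, since Besicovitch only governs the selected balls. Fix a point $y$ and let $I = \{j : y \in 2B_j\}$. All these doubles contain $y$, so they pairwise intersect, and property~4 forces their radii to lie within a factor $\frac{7}{5}$ of $r_* := \min_{j \in I} r_j$. Each original ball $B_j$, $j \in I$, then satisfies $r_j \ge r_*$ and $B_j \subset B(y, 3r_j) \subset B(y, \tfrac{21}{5}r_*)$, while these $B_j$ overlap with multiplicity at most $N(n)$; comparing volumes (with $\omega_n = |B(0,1)|$), $|I|\,\omega_n r_*^n \le \sum_{j \in I}|B_j| \le N(n)\,\omega_n (\tfrac{21}{5})^n r_*^n$, so $|I| \le N(n)(\tfrac{21}{5})^n$, a purely dimensional bound; this is property~3 (for $\mathcal{B}$ itself it is immediate from $B_j \subset 2B_j$). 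For property~5 I would run the same packing estimate on the intersection graph of $\{2B_j\}$: if $2B_i \cap 2B_j \neq \emptyset$ then $|x_i - x_j| < \tfrac{24}{5}r_i$ and $r_j \ge \tfrac{5}{7}r_i$ by property~4, so $B_j \subset B(x_i, \tfrac{31}{5}r_i)$ and the same volume comparison bounds the number of such $j$ by a dimensional constant $D(n)$. Hence each $2B_j$ meets at most $D(n)$ others, the intersection graph has maximum degree $\le D(n)$, and a greedy coloring partitions $\{2B_j\}$ into at most $D(n)+1$ tuples of pairwise disjoint balls, the number depending only on $n$. The only genuinely delicate point is this transfer: one must invoke the slow variation of $\rho$ (property~4) precisely because doubling the balls would otherwise destroy the bounded-overlap guarantee that Besicovitch supplies only for the undoubled family.
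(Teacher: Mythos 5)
The paper itself gives no proof of Lemma~\ref{lem:bez}: it is quoted as a known corollary of the Besicovitch covering theorem, with a pointer to \cite[Theorem~1.1]{Gus1978}. Your argument is a correct and complete derivation along precisely those intended lines---the Whitney-type radius $\rho = \frac{1}{12}\,{\rm dist}(\cdot,\partial U)$ makes property~2 automatic and, being $\frac{1}{12}$-Lipschitz, yields property~4 for the whole family before any selection; the Besicovitch selection (with your dyadic layering to handle unbounded radii) gives properties~1 and the multiplicity bound, and your volume-packing plus greedy-coloring transfer to the doubled balls settles properties~3 and~5---so the proposal is correct and matches the paper's intended approach.
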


\begin{lemma}
	Take a~monotone countably additive function~%
	$\Phi$
	defined on the~bounded open sets
	$A'$
	with
	$A' \cap \Omega' \neq \emptyset$.
	For every set 
	$A'$
	there exists a~sequence of balls
	$\{B_j\}_{j\in\mathbb{N}}$
	such that
	\begin{enumerate}
	\item
		the families of
		$\{B_j\}_{j\in\mathbb{N}}$
		and
		$\{2 B_j\}_{j\in\mathbb{N}}$
		constitute a covering of finite multiplicity of~%
		$U${\rm;}
	
	\item
		$\sum\limits_{j=1}\limits^{\infty} \Phi(2 B_j) \leq \zeta_n \Phi (U)$
		where the constant
		$\zeta_n$
		depends only on the dimension~%
		$n$.
	\end{enumerate}
\end{lemma}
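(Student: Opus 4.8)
The plan is to apply the Besicovich-type covering Lemma~\ref{lem:bez} to the given bounded open set (the set called $A'$ in the statement, which I will denote $U$; since it is bounded we have $U \neq \R^n$) and then to translate its purely combinatorial conclusions into the additivity estimate for $\Phi$. First I would invoke Lemma~\ref{lem:bez} to obtain a countable family $\mathcal{B} = \{B_j\}$, $B_j = B_j(x_j, r_j)$, with $\bigcup_j B_j = U$, with ${\rm dist}(x_j, \partial U) = 12 r_j$, and such that both $\{B_j\}$ and $\{2B_j\}$ cover $U$ with finite multiplicity. The first assertion of the present lemma is then immediate from properties 1 and 3 of Lemma~\ref{lem:bez}.

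The crucial observation for the second assertion is that every doubled ball is still contained in $U$: a point $y \in 2B_j$ satisfies $|y - x_j| < 2 r_j < 12 r_j = {\rm dist}(x_j, \partial U)$, so $2B_j \subset U$. (The factor $12$ in property 2 of Lemma~\ref{lem:bez} is precisely what leaves room for the doubled radius.) Hence $\Phi$ is defined on each $2B_j$, and by monotonicity $\Phi(2B_j) \leq \Phi(U)$.

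Next I would use property 5 of Lemma~\ref{lem:bez} to partition the family $\{2B_j\}$ into $N = N(n)$ tuples $T_1, \dots, T_N$, the number $N$ depending only on the dimension $n$, such that within each tuple the balls are pairwise disjoint. For a fixed tuple $T_k$ the balls $\{2B_j : j \in T_k\}$ are pairwise disjoint open subsets of $U$, so the countable additivity of $\Phi$ together with monotonicity gives
\[
	\sum_{j \in T_k} \Phi(2B_j) = \Phi\Bigl( \bigcup_{j \in T_k} 2B_j \Bigr) \leq \Phi(U).
\]
Summing over the $N$ tuples yields $\sum_{j} \Phi(2B_j) \leq N\,\Phi(U)$, which is the second assertion with $\zeta_n = N(n)$.

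I expect the only genuinely delicate point to be the containment $2B_j \subset U$: it is what permits $\Phi$ to be evaluated on the doubled balls, and, combined with the finite number of pairwise-disjoint tuples supplied by Lemma~\ref{lem:bez}, it is what pins down the dependence of $\zeta_n$ on $n$ alone. Everything else is a direct transcription of the covering lemma into the language of the monotone countably additive set function $\Phi$.
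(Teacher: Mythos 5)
Your proof is correct and takes essentially the same route as the paper's: invoke Lemma~\ref{lem:bez}, split $\{2B_j\}$ into $\zeta_n$ pairwise-disjoint subfamilies, and combine countable additivity with monotonicity to get $\sum_j \Phi(2B_j) \leq \zeta_n \Phi(U)$. Your explicit verification that $2B_j \subset U$ (from $\operatorname{dist}(x_j,\partial U) = 12 r_j$) is a detail the paper leaves implicit, but it is precisely what licenses the monotonicity step, so including it is a welcome addition rather than a deviation.
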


\begin{proof}
In accordance with Lemma~\ref{lem:bez},
construct two sequences
$\{B_j\}_{j\in\mathbb{N}}$
and
$\{2 B_j\}_{j\in\mathbb{N}}$
of balls
and subdivide the latter
into
$\zeta_n$
subfamilies
$\{2 B_{1j}\}_{j\in\mathbb{N}}, \dots, \{2 B_{\zeta_n j}\}_{j\in\mathbb{N}}$
so that
in each tuple the balls are disjoint:
$2 B_{ki} \cap 2 B_{kj} = \emptyset$
for
$i \neq j$
and
$k = 1, \dots, \zeta_n$.
Consequently,
\begin{equation*}
	\sum\limits_{j=1}\limits^{\infty} \Phi(2 B_j) = \sum\limits_{k=1}
	\limits^{\zeta_n}
	\sum\limits_{j=1}\limits^{\infty} \Phi(2 B_{kj}) \leq
	\sum\limits_{k=1}\limits^{\zeta_n} \Phi(U) = 
	\zeta_n \Phi(U).
\end{equation*}
\end{proof}

Mappings inducing a~bounded composition operator is known to satisfy 
the Luzin $\N^{-1}$-property 
{\cite[Theorem 4]{VodUhl2002}}.

\begin{theorem}[\hspace{-.3pt}{\cite[Theorem 4]{VodUhl2002}}]\label{thm4}
	Take two open sets
	$\Omega$~%
	and~%
	$\Omega'$
	in
	$\R^n$
	with
	$n\geq 1$.
	If a~measurable mapping
	$\varphi\colon \Omega \to \Omega'$
	induces  a~bounded composition operator
	\begin{equation*}
		\varphi^*\colon L^1_p(\Omega') \cap C^\infty(\Omega')\to L^1_q (\Omega), 
		\quad 1\leq q \leq p \leq n,
	\end{equation*}
	then~%
	$\varphi$
	has the Luzin $\N^{-1}$-property,
	i.e.\ 
	$|\varphi^{-1} (A)| = 0$
	if 
	$|A| = 0$,
	$A \subset \Omega'$.
\end{theorem}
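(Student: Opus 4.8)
The plan is to show that the set function $A' \mapsto |\varphi^{-1}(A')|$ is absolutely continuous with respect to Lebesgue measure, which immediately yields the Luzin $\N^{-1}$-property: given $A\subset\Omega'$ with $|A|=0$, choose open sets $U_m\supset A$ with $|U_m|\to 0$ and conclude $|\varphi^{-1}(A)|\le\inf_m|\varphi^{-1}(U_m)|=0$. Thus the whole task reduces to a quantitative estimate $|\varphi^{-1}(U)|\le C\,\Phi(U)^a|U|^{1-a}$ for open $U\subset\Omega'$, where $\Phi$ is the monotone countably additive bounded set function of Lemma~\ref{lem:Phi} (with $\sigma=\frac{pq}{p-q}$) and $0\le a<1$; since $\Phi(U)\le\Phi(\Omega')<\infty$, the factor $|U|^{1-a}$ then forces the conclusion.

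First I would establish a local estimate on a ball. Fix $B=B(y',r)$ with $\overline{2B}\subset\Omega'$ and take a Lipschitz cut-off $u\in\overset{\circ}L{}^1_p(2B)\cap\Lip(2B)$ with $u\equiv 1$ on $B$ and $|\nabla u|\le C/r$, so that $\|u\mid L^1_p(2B)\|\le C r^{n/p-1}$. Writing $v=\varphi^*u=u\circ\varphi$, the definition of $\Phi$ gives $\|v\mid L^1_q(\Omega)\|\le\Phi(2B)^{1/\sigma}\|u\mid L^1_p(2B)\|$, while $v\equiv 1$ on $\varphi^{-1}(B)$. The key move, and the one that sidesteps the degeneracy of $\varphi$, is to bound the measure of the preimage rather than any gradient integral: applying the Sobolev inequality $\|v\mid L_{q^*}(\Omega)\|\le C\|v\mid L^1_q(\Omega)\|$ with $q^*=\frac{nq}{n-q}$ to $v$ (supported in $\varphi^{-1}(2B)$) and using $v\equiv 1$ on $\varphi^{-1}(B)$ yields $|\varphi^{-1}(B)|^{1/q^*}\le C\Phi(2B)^{1/\sigma}r^{n/p-1}$, that is $|\varphi^{-1}(B)|\le C\,\Phi(2B)^a r^{n(1-a)}$ with $a=q^*/\sigma=\frac{n(p-q)}{p(n-q)}$. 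Because no lower bound on $|D\varphi|$ ever enters, the set $\{D\varphi=0\}$, where finite distortion (Lemma~\ref{lem:FD}) forces $J=0$, creates no obstruction.

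Next I would sum over a covering. For open $U\subset\Omega'$, take the balls $\{B_j\}$ furnished by Lemma~\ref{lem:bez} together with the associated estimate $\sum_j\Phi(2B_j)\le\zeta_n\Phi(U)$. From $\varphi^{-1}(U)\subset\bigcup_j\varphi^{-1}(B_j)$ and the local bound, $|\varphi^{-1}(U)|\le C\sum_j\Phi(2B_j)^a r_j^{n(1-a)}$; Hölder's inequality with exponents $1/a$ and $1/(1-a)$, together with the finite multiplicity $\sum_j r_j^n\le C|U|$, gives $|\varphi^{-1}(U)|\le C(\zeta_n\Phi(U))^a(C|U|)^{1-a}$, which is exactly the desired estimate. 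Here one checks that $n(1-a)$ is precisely the exponent matching $\sum_j r_j^n$ in the Hölder split, and that $a\in(0,1)$ holds precisely because $p<n$.

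The main obstacle is obtaining a strictly positive power of $r$ in the local estimate: this is what converts smallness of $|U|$ into smallness of $|\varphi^{-1}(U)|$, and it hinges on $n/p-1>0$, i.e. on $p<n$. At the endpoint $p=n$ one has $a=1$ and the scaling gain disappears; there I would instead invoke the absolute continuity with respect to Lebesgue measure of the bounded monotone countably additive function $\Phi$ (its volume derivative being finite a.e.), which still yields $|\varphi^{-1}(U)|\le C\Phi(U)\to 0$ as $|U|\to0$ along a differentiation basis, and the borderline $q=p$, lying outside the scope of Lemma~\ref{lem:Phi}, is treated directly. A secondary technical point is the legitimacy of the Sobolev inequality applied to $v$, handled by exhausting $\Omega$ by subdomains and extending $v$ by zero, using that $u$ is compactly supported in $\Omega'$.
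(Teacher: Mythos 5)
Your overall architecture (Lipschitz cut-offs as test functions, the set function $\Phi$ of Lemma~\ref{lem:Phi}, the Besicovitch-type covering of Lemma~\ref{lem:bez}, a H\"older summation over the covering) is the same circle of ideas as in the proof this paper relies on, but the central analytic step is invalid. You apply the Sobolev inequality $\|v \mid L_{q^*}(\Omega)\| \leq C\, \|v \mid L^1_q(\Omega)\|$ to $v=u\circ\varphi$, on the grounds that $u$ is compactly supported in $2B\subset\Omega'$. That is not legitimate: $\varphi$ is only \emph{measurable}, so the set $\varphi^{-1}(2B)$, outside of which $v$ vanishes, need not be compactly contained in $\Omega$ --- it can touch $\partial\Omega$ or even be all of $\Omega$. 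Concretely, take any admissible $\varphi$ with $\varphi(\Omega)\subset B$ (a constant map, say, which does induce a bounded --- indeed zero --- operator on $L^1_p(\Omega')\cap C^\infty(\Omega')$): then $v\equiv 1$ on $\Omega$, $\nabla v\equiv 0$, $\Phi(2B)=0$, while $|\varphi^{-1}(B)|=|\Omega|>0$, so both the Sobolev inequality and your resulting local estimate $|\varphi^{-1}(B)|\leq C\,\Phi(2B)^{a}r^{n(1-a)}$ fail outright, for every constant $C$. Your proposed repair (exhaust $\Omega$ by subdomains and extend $v$ by zero) cannot close this: extending $v$ by zero across $\partial\Omega$ creates a jump, so the extension need not lie in $W^1_q(\R^n)$, and compact support of $u$ in $\Omega'$ gives no information about the support of $u\circ\varphi$ in $\Omega$. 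Since such degenerate maps satisfy the stated hypotheses, no proof of your unlocalized estimate can exist; a correct argument must contain a localization/non-degeneracy device.

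That device is exactly what the paper supplies in the proof of Lemma~\ref{lem:N-1} (presented there as a modification of the proof of Theorem~4 of \cite{VodUhl2002}): one works on a fixed cube $Q$ with $2Q\Subset\Omega$ and $|Q\setminus\varphi^{-1}(E)|>0$; by Luzin's theorem one extracts a compact set $T\subset Q\setminus\varphi^{-1}(E)$ of \emph{positive measure} on which $\varphi$ is continuous, and chooses the open neighborhood $U\supset E$ disjoint from the compact set $\varphi(T)$; then every test composition $\varphi^*f_i$ (with $f_i$ supported in $2B_i\subset U$) vanishes on $T$, and the Poincar\'e inequality for $W^1_q(Q)$-functions vanishing on a set of positive measure replaces your global Sobolev inequality, giving $|\varphi^{-1}(B_i)\cap Q|^{1/q-1/n}\leq C_3\,\Phi(2B_i)^{1/\sigma}\,|B_i|^{1/p-1/n}$ with $C_3$ depending on $Q$ and $T$. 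This dependence is not a blemish: it encodes the non-degeneracy without which the estimate is false, and the degenerate alternative (no such $T$ exists) is excluded because boundedness of $\varphi^*$ forces $\varphi$ to have finite distortion (Lemma~\ref{lem:FD}), so $\varphi^{-1}(E)$ exhausting $\Omega$ would give $J(\cdot,\varphi)=0$ and hence $D\varphi=0$ a.e., i.e.\ $\varphi$ constant. One then concludes $|\varphi^{-1}(E)\cap Q|=0$ cube by cube. A secondary gap: your treatment of the endpoint $p=n$ via a ``volume derivative'' of $\Phi$ is also unsound --- a bounded monotone countably additive set function on open sets can charge a single point, so $|U|\to 0$ does not force $\Phi(U)\to 0$; this endpoint, too, must be handled through the vanishing-on-$T$ Poincar\'e scheme rather than through absolute continuity of $\Phi$.
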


\begin{remark}
	Theorem~4 of~\cite{VodUhl2002} is stated for a~mapping
	$\varphi\colon \Omega \to \Omega'$
	generating a~bounded composition operator
	$\varphi^*\colon L^1_p(\Omega') \to L^1_q (\Omega)$
	with
	$1\leq q \leq p \leq n$.
	Observe that
	only smooth test functions are used in its proof,
	which therefore
	also justifies Theorem~\ref{thm4}.
\end{remark}

Here we obtain the next generalization of Theorem~\ref{thm4}.

\begin{lemma}\label{lem:N-1}
	If a~measurable 
	mapping  
	$\varphi\colon \Omega \to \overline{\Omega'}$
	induces a~bounded composition operator
	\begin{equation*}
		\varphi^*\colon L^1_p (\Omega') \cap {\Lip}(\Omega') \to 
		L^1_q (\Omega), 
		\quad 1 \leq q \leq p \leq n,
	\end{equation*}
	then
	$|\varphi^{-1}(E)| = 0$
	if
	$|E| = 0$,
	$E \subset \overline{\Omega'}$.
\end{lemma}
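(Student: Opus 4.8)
The plan is to mirror the scheme of Theorem~\ref{thm4} (Theorem~4 of \cite{VodUhl2002}), the only genuinely new feature being that $E$ may meet $\partial\Omega'$. Since $\varphi$ takes values in $\overline{\Omega'}$ and $|E|=0$, I would first reduce the assertion to an absolute-continuity statement for open sets: for every $\varepsilon>0$ pick an open $U\supset E$ with $|U|<\varepsilon$; then $U\cap\Omega'\neq\emptyset$ (as $E\subset\overline{\Omega'}$), $U$ is allowed to straddle $\partial\Omega'$, and $\varphi^{-1}(E)\subset\varphi^{-1}(U)$. Thus it suffices to show that $|\varphi^{-1}(U)|\to 0$ as $|U|\to 0$ over such open sets. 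This reformulation is what lets the boundary be treated on the same footing as the interior, because the additive function $\Phi$ of Lemma~\ref{lem:Phi} is already defined for open sets $A'$ with $A'\cap\Omega'\neq\emptyset$ that need not lie inside $\Omega'$.

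The heart of the argument is a local estimate on balls. Given $B=B(y_0,r)$, I would test $\varphi^*$ against the Lipschitz cutoff $g$ equal to $1$ on $B$, vanishing off $2B$, with $|\nabla g|\le 1/r$, so that $g\in\overset{\circ}L{}^1_p(2B)\cap\Lip(2B)$ and $\|g\mid L^1_p\|\sim r^{(n-p)/p}$. The composition $\varphi^*g=g\circ\varphi$ equals $1$ on $\varphi^{-1}(B)$ and $0$ off $\varphi^{-1}(2B)$, hence is admissible for the $q$-capacity of the condenser $(\varphi^{-1}(\overline B),\varphi^{-1}(2B))$. Bounding $\|\varphi^*g\mid L^1_q(\Omega)\|\le\Phi(2B)^{1/\sigma}\|g\mid L^1_p(2B\cap\Omega')\|$ by Lemma~\ref{lem:Phi} and combining with the isocapacitary (Sobolev) inequality $\operatorname{cap}_q\ge c\,|\varphi^{-1}(B)|^{(n-q)/n}$ gives, after simplifying the exponents,
\[
	|\varphi^{-1}(B)|\le C\,\Phi(2B)^{\beta}\,|B|^{\alpha},
	\qquad \alpha=\tfrac{q(n-p)}{p(n-q)},\quad \beta=\tfrac{n(p-q)}{p(n-q)},\quad \alpha+\beta=1 .
\]
Crucially, $g$ is a function on all of $\R^n$, and none of these steps uses $2B\subset\Omega'$, so the estimate holds verbatim for balls crossing $\partial\Omega'$.

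Next I would cover $U$ by the Besicovitch family $\{B_j\}$ from Lemma~\ref{lem:bez}, for which $\bigcup_j B_j=U$, $\sum_j\Phi(2B_j)\le\zeta_n\Phi(U)$, and, by finite multiplicity, $\sum_j|B_j|\le C|U|$. Summing the local estimate and applying Hölder's inequality for sums with the conjugate exponents $1/\alpha$, $1/\beta$ (legitimate precisely because $\alpha+\beta=1$) yields
\[
	|\varphi^{-1}(U)|\le\sum_j|\varphi^{-1}(B_j)|\le C\,|U|^{\alpha}\,\Phi(U)^{\beta}.
\]
Since $\Phi$ is bounded, the right-hand side tends to $0$ with $|U|$, and letting $|U|\downarrow 0$ with $U\supset E$ forces $|\varphi^{-1}(E)|=0$. (Lipschitz rather than $C^\infty$ test functions are used throughout, which is harmless by the Remark following Theorem~\ref{thm4}; the degenerate range $q=p$ is reached by first passing to a strictly smaller exponent via the boundedness of $\Omega$.)

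I expect the main obstacle to be the local estimate together with its validity across $\partial\Omega'$. The subtlety is that the natural test function is \emph{constant} on $\varphi^{-1}(B)$, so its gradient carries no information there; the estimate must therefore be routed through the $q$-capacity of the preimage condenser rather than read off directly from $\|\nabla(\varphi^*g)\|$. One must then verify that $\Phi$ and the covering of Lemma~\ref{lem:bez}, originally tailored to sets inside $\Omega'$, still control $|\varphi^{-1}(\,\cdot\,)|$ for balls meeting the boundary. Finally, the borderline exponent $p=n$ (where $\alpha=0$, so the $|B|$-factor disappears) is the delicate case, and here the conclusion additionally requires the absolute continuity of $\Phi$ with respect to Lebesgue measure.
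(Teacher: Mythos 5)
The proposal breaks down at its central step, the local estimate $|\varphi^{-1}(B)|\le C\,\Phi(2B)^{\beta}|B|^{\alpha}$. You derive it by declaring $\varphi^*g$ admissible for the $q$-capacity of the condenser $\bigl(\varphi^{-1}(\overline B),\varphi^{-1}(2B)\bigr)$ and invoking the isocapacitary inequality $\operatorname{cap}_q\ge c\,|\varphi^{-1}(B)|^{(n-q)/n}$. That inequality holds (with a dimensional constant) only for test functions that, extended by zero, are Sobolev on all of $\R^n$ — i.e.\ compactly supported in the outer plate. But $\varphi^*g$ lives only on $\Omega$ and vanishes merely on $\Omega\setminus\varphi^{-1}(2B)$, and $\varphi^{-1}(2B)$ is in general \emph{not} compactly contained in $\Omega$: precisely when $2B$ meets $\partial\Omega'$ (the new case of the lemma), its preimage typically reaches $\partial\Omega$, so zero-extension of $\varphi^*g$ is not a Sobolev function and no Sobolev/capacity inequality applies to it. Indeed nothing in your setup prevents $\varphi^*g\equiv 1$ on $\Omega$ (take $\varphi(\Omega)\subset B$): then $\nabla(\varphi^*g)=0$ while $|\varphi^{-1}(B)|=|\Omega|$, and your estimate fails outright; a constant map, which does induce a bounded (zero) composition operator with $\Phi\equiv 0$, is a concrete counterexample to the claimed bound and to any uniform-constant inequality $|\varphi^{-1}(U)|\le C|U|^{\alpha}\Phi(U)^{\beta}$. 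This is exactly the difficulty the paper's proof is built to circumvent: it fixes a cube $Q$ with $2Q\subset\Omega$, uses the finite-distortion property of $\varphi$ (Lemma~\ref{lem:FD}/\cite{VodUhl2002}) together with Luzin's theorem to produce a compact set $T\subset Q\setminus\varphi^{-1}(E)$ of positive measure whose image $\varphi(T)$ is compact and disjoint from $E$, then restricts to open neighborhoods $U\supset E$ with $U\cap\varphi(T)=\emptyset$, so that every test composition $\varphi^*f_i$ vanishes on $T$. A Poincar\'e inequality on $Q$ \emph{relative to the vanishing set} $T$ (with constant depending on $Q$ and $T$) then replaces your isocapacitary inequality, and it yields bounds for $|\varphi^{-1}(B_i)\cap Q|$, not $|\varphi^{-1}(B_i)|$; the conclusion $|\varphi^{-1}(E)\cap Q|=0$ is assembled cube by cube. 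Your proposal contains no substitute for this vanishing set, and without it the key inequality has no proof.

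Two secondary points. First, your Besicovitch covering, the bound $\sum_j\Phi(2B_j)\le\zeta_n\Phi(U)$, and the H\"older summation with exponents $1/\alpha$, $1/\beta$ do coincide with the paper's scheme (Lemmas~\ref{lem:Phi} and~\ref{lem:bez}), so the outer layers of the argument match; only the core local estimate is wrong. Second, you correctly flag that at the endpoint $p=n$ (where $\alpha=0$) the argument needs $\Phi(U)\to 0$, i.e.\ a form of absolute continuity of $\Phi$, but you leave it unproved — and this endpoint is not a marginal case: it is the one actually used later (Lemma~\ref{lem:bounded_comp} produces an operator from $L^1_n(\Omega')\cap\Lip(\Omega')$). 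Deferring it means the case the paper needs most is not covered by your proof.
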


\begin{proof}
If 
$E \subset \Omega'$
then the statement of the theorem follows by
Theorem~\ref{thm4}.
Consider the cut-off 
$\eta \in C_0^{\infty}(\R^n)$
equal to~1 on
$B(0,1)$
and vanishing outside
$B(0,2)$.
By Lemma~\ref{lem:Phi}
the function
$f (y) = \eta \big( \frac{y-y_0}{r} \big)$
satisfies 
\begin{equation*}
	\| \varphi^* f \mid L^1_q (\Omega) \| 
	\leq C_1 \Phi(2B)^{\frac{1}{\sigma}} |B|^{\frac{1}{p}- \frac{1}{n}},
\end{equation*} 
where
$B \cap \Omega' \neq \emptyset$
(let 
$\Phi(2B)^{\frac{1}{\sigma}} = \|\varphi^*\|$
for any ball 
$B$
if 
$p = q$).
Take a~set 
$E \subset {\partial \Omega}'$
with
$|E| = 0$.
Since~%
$\varphi$
is a~mapping with 
finite distortion \cite{VodUhl2002},
$\varphi^{-1}(E) \neq \Omega$
(otherwise,
$J(x,\varphi)=0$
and,
consequently,
$D\varphi(x) = 0$,
that is,~%
$\varphi$
is a~constant mapping).
Hence,
there is a~cube
$Q \subset \Omega$
such that
$2Q \subset \Omega$
and
$| Q \setminus \varphi^{-1} (E) | >0$
(here
$2Q$
is a~cube with the same center as~%
$Q$
and the edges stretched by a~factor of two compared to~%
$Q$).
Since~%
$\varphi$
is a~measurable mapping,
by Luzin's theorem  there is a~compact set
$T \subset Q \setminus \varphi^{-1} (E)$
of positive measure such that
$\varphi\colon T \to \overline{\Omega'}$
is continuous.
Then,
the image
$\varphi(T) \subset \overline{\Omega'}$
is compact and
$\varphi(T) \cap E = \emptyset$.
Consider an~open set 
$U \supset E$
with
$\varphi(T)\cap U = \emptyset$
and
$U\cap \Omega' \neq \emptyset$.
Choose a~tuple
$\{B(y_i, r_i)\}_{i\in\mathbb{N}}$
of balls in accordance with Lemma~\ref{lem:bez}:
$\{B(y_i, r_i)\}_{i\in\mathbb{N}}$
and
$\{B(y_i, 2 r_i)\}_{i\in\mathbb{N}}$
are coverings of~%
$U$,
and the multiplicity of the covering
$\{B(y_i, 2 r_i)\}_{i\in\mathbb{N}}$
is finite
($B(y_i, 2 r_i) \subset U$
for all~%
$i \in \mathbb{N}$).
Then the function
$f_i$
associated to the ball
$B(y_i, r_i)$
enjoys
$\varphi^*f_i = 1$
on 
$\varphi^{-1}(B(y_i, r_i))$
and
$\varphi^* f = 0$
outside 
$\varphi^{-1}(B(y_i, 2 r_i))$,
in~particular 
$\varphi^* f_i = 0$
on~%
$T$.
In addition,
we have the estimate
\begin{equation*}
	\| \varphi^* f_i \mid L^1_q (2Q) \| \leq \| \varphi^* f_i \mid L^1_q (\Omega)  \| 
	\leq 
	C_1 \Phi(B(y_i, 2 r_i))^{\frac{1}{\sigma}} |B(y_i, r_i)|^{\frac{1}{p}- \frac{1}{n}}.
\end{equation*} 
By the Poincar\'e inequality
(see~\cite{Maz2011} for instance),
for every function
$g \in W^1_{q,\loc} (Q)$
with
$q<n$
vanishing on~%
$T$,
we have
\begin{equation*}
	\bigg( \int\limits_Q | g |^{q^*} \,dx \bigg)^{1/q^*} 
	\leq 
	C_2 l(Q)^{n/q^*}\bigg( \int\limits_{2Q} |\nabla g |^{q} \,dx \bigg)^{1/q}
\end{equation*} 
where
$q^* = \frac{nq}{n-q}$
and
$l(Q)$
is the edge length of~%
$Q$.

Applying the Poincar\'e inequality to the function
$\varphi^* f_i$
and using the last two estimates,
we obtain
\begin{equation*}
	|\varphi^{-1}(B(y_i, r_i)) \cap Q|^{\frac{1}{q} - \frac{1}{n}}  
	\leq
	C_3 \Phi(B(y_i, 2 r_i))^{\frac{1}{\sigma}} 
	|B(y_i, r_i)|^{\frac{1}{p}- \frac{1}{n}}.
\end{equation*}
Note, that the constant 
$C_3$
can depend on the cube
$Q$.
In turn,
H\"older's inequality guarantees that %
\begin{multline*}
	\Bigg(\sum\limits_{i=1}\limits^{\infty}|\varphi^{-1}(B(y_i, r_i)) \cap Q|\Bigg)
	^{\frac{1}{q} - \frac{1}{n}}
	\\ \leq
	C_3 \Bigg(\sum\limits_{i=1}\limits^{\infty}\Phi(B(y_i, 2 r_i))\Bigg)^{\frac{1}{\sigma}}
	\Bigg(\sum\limits_{i=1}\limits^{\infty}|B(y_i, r_i)|\Bigg)^{\frac{1}{p}- \frac{1}{n}}.
\end{multline*} 
As the open set~%
$U$
is arbitrary,
this estimate yields
$|\varphi^{-1}(E) \cap Q|=0$.
Since the cube
$Q \subset \Omega$
is arbitrary,
it follows that
$|\varphi^{-1}(E)|=0$.
\end{proof}

The sequence
$\{\varphi_k\}_{k\in\mathbb{N}}$
converges weakly in
$W^1_{r,\loc} (\Omega)$.
Therefore,
by embedding theorem 
picking up the subsequence if necessary,
it is reputed that
$\varphi_0$
is an almost everywhere pointwise limit of the homeomorphisms
$\varphi_k\colon \Omega \to \Omega'$.
In this case
the images of some points
$x\in \Omega$
may belong to the boundary
$\partial \Omega'$.

Denote by
$S \subset\Omega$
a~negligible set
on which the convergence 
$\varphi_k(x)\rightarrow \varphi_0(x)$
as
$k \rightarrow \infty$
fails.
If
$x\in\Omega\setminus S$
with 
$\varphi(x) \in \Omega'$
then the
injectivity follows from the uniform convergence of
$\psi_k = \varphi_k^{-1}$
on~%
$\Omega'$ 
(see Lemma~\ref{lem:prop_inv})
and the identity
\begin{equation*}
	\psi_k \circ \varphi_k(x)=x, \quad x \in \Omega \setminus S,
\end{equation*}
Passing to the limit as
$k \rightarrow \infty$,
we infer that

\begin{equation*}
	\psi_0 \circ \varphi_0(x)=x,\: x\in\Omega\setminus S.
\end{equation*}
Hence,
we deduce that
if
$\varphi_0(x_1)=\varphi_0(x_2)\in  \Omega'$
for two points
$x_1,x_2\in\Omega\setminus S$
then
$x_1=x_2$.

Since for the domain
$\Omega'$
with Lipschitz boundary
we have 
${|\partial \Omega'| = 0}$,
Lemmas~\ref{lem:bounded_comp}~and~\ref{lem:N-1} imply Theorem~\ref{thm:ae_injectivity}.

Let us mention another interesting corollary of Theorem~\ref{thm4}.
Recall that 
a mapping
$f\colon \Omega \to \Omega'$
is said to be
\textit{approximative differentiable}
at
$x \in \Omega$
with approximative derivative
$Df(x)$
if there 
is a set
$A\subset\Omega$
of density one at
$x$%
\footnote{
	i.e.\
	$\lim_{r\to 0} \frac{|A \cap B(x,r)|}{|B(x,r)|} = 1$
}
such that
\begin{equation*}
	\lim\limits_{y\to x, \: y \in A} \frac{f(y) - f(x) - Df(x)(y - x)}{\|y - x\|} = 0.
\end{equation*}
It is well known that Sobolev functions are approximative differentiable a.e.
(see \cite{Fed1969, HenKos2014} for more details).

\begin{lemma}\label{lem:J>0}
	If an~almost everywhere injective mapping
	$\varphi\colon \Omega \to \Omega'$
	with
	$\varphi \in W^1_1(\Omega)$
	and
	$J(x,\varphi) \geq 0$
	a.e.~in
	$\Omega$
	has the Luzin $\N^{-1}$-property
	then
	${J(x,\varphi) > 0}$
	for almost all
	$x \in \Omega$.
\end{lemma}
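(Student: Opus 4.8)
The plan is to prove the contrapositive-flavored assertion directly: since $J(x,\varphi)\geq 0$ a.e., the conclusion $J(x,\varphi)>0$ a.e.\ is the same as saying that the zero set of the Jacobian,
\[
Z=\{x\in\Omega: J(x,\varphi)=0\},
\]
is negligible, and this is what I would establish. The driving tool is the area formula for approximately differentiable mappings. First I would invoke the fact recalled just above the statement, that a $W^1_1$-mapping is approximately differentiable a.e.; thus outside a negligible set $\Sigma$ the approximate differential agrees with the weak differential $D\varphi$, and hence the approximate Jacobian coincides with $J(x,\varphi)$ a.e. Setting $A_0=\Omega\setminus\Sigma$ and $Z_0=Z\cap A_0$, we have $|Z\setminus Z_0|=0$, so it suffices to show $|Z_0|=0$.

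Next I would apply the area formula to $\varphi$ on the set $Z_0$ of approximate differentiability:
\[
\int_{Z_0} J(x,\varphi)\,dx=\int_{\R^n} N(\varphi,Z_0,y)\,dy,
\]
where $N(\varphi,Z_0,y)=\#\{x\in Z_0:\varphi(x)=y\}$ is the Banach indicatrix. Because $J(x,\varphi)=0$ at every point of $Z_0$, the left-hand side vanishes, which forces $N(\varphi,Z_0,y)=0$ for almost every $y$. Consequently the measurable set $N'=\{y:N(\varphi,Z_0,y)\geq 1\}$ is null, and $\varphi(Z_0)\subseteq N'$, so $|\varphi(Z_0)|=0$. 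In effect the area formula supplies a Sard-type statement for free: the critical set on which the Jacobian degenerates is carried onto a set of measure zero.

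Finally I would close the argument with the Luzin $\N^{-1}$-property. Applying it to the null set $N'\subset\overline{\Omega'}$ yields $|\varphi^{-1}(N')|=0$; since trivially $Z_0\subseteq\varphi^{-1}(N')$, we get $|Z_0|=0$, hence $|Z|=0$, which is precisely the claim. I would remark that the almost-everywhere injectivity hypothesis is not strictly needed for this chain of reasoning; it would merely let one replace $N(\varphi,Z_0,\cdot)$ by an indicator function and lighten the bookkeeping. In the intended application $\varphi=\varphi_0$ both hypotheses (a.e.\ injectivity from Theorem~\ref{thm:ae_injectivity} and the $\N^{-1}$-property from Lemma~\ref{lem:N-1}) are available simultaneously.

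The main obstacle I anticipate is legitimizing the area formula at this low level of regularity. A $W^1_1$-mapping need not possess the forward Luzin $\N$-property, so one may not assume a priori that images of null sets are null and the naive change of variables can fail. The remedy is to use the version of the area formula valid for arbitrary approximately differentiable maps (see \cite{Fed1969}, and \cite{HenKos2014}), which is proved pointwise through the approximate derivative and presupposes no Luzin $\N$-property; the underlying decomposition of $A_0$ into countably many pieces on which $\varphi$ is Lipschitz is exactly what makes the formula valid. Once that is secured, the vanishing of $\int_{Z_0} J(x,\varphi)\,dx$ together with the $\N^{-1}$-property does the rest.
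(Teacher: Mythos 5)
Your proof is correct, and its skeleton matches the paper's: both arguments run the change-of-variables/area formula for a.e.\ approximately differentiable mappings (\cite{Fed1969,Haj1993}) over the zero set of the Jacobian to conclude that its image is null, and then invoke the Luzin $\N^{-1}$-property to pull that nullity back to $\Omega$. The one genuine difference is how injectivity enters. The paper sets $\sigma=\varphi(Z)$ and uses a.e.\ injectivity twice: first to identify $\chi_Z$ with $\chi_\sigma\circ\varphi$ a.e.\ (a point of $\Omega\setminus Z$ cannot be sent into $\varphi(Z)$ without violating injectivity), and then to evaluate $\int(\chi_\sigma\circ\varphi)\,J\,dx$ as $|\sigma|$, i.e.\ with multiplicity one. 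You instead keep the Banach indicatrix $N(\varphi,Z_0,\cdot)$, observe that $\int_{Z_0}J\,dx=0$ forces $N(\varphi,Z_0,y)=0$ for a.e.\ $y$, hence $|\varphi(Z_0)|=0$, and never use injectivity at all; your closing remark that the a.e.-injectivity hypothesis is redundant in this lemma is accurate, and it slightly generalizes the statement (the paper's phrasing needs injectivity only because it replaces the indicatrix by an indicator function). Your caution about the forward Luzin $\N$-property is also well placed: the version of the area formula you invoke, proved through Federer's decomposition of the set of approximate differentiability into countably many pieces on which the map is Lipschitz, is exactly the tool behind the paper's citation of \cite{Haj1993}, whose ``bad'' set $\Sigma$ is removed from the domain of integration; restricting to $Z_0\subset\Omega\setminus\Sigma$ as you do is the right bookkeeping, and each approach then finishes identically with Lemma~\ref{lem:N-1}.
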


\begin{proof}
	Let~%
	$E$
	be a~set outside which the mapping~%
	$\varphi$
	is approximatively differentiable
	and has the Luzin $\N^{-1}$-property. 
	Since
	$\varphi \in W^1_1(\Omega)$,
	then
	$|E| = 0$
	(see~\cite{Haj1993,Whit1951}).
	In addition,
	we may assume that
	$\{ x\in \Omega \setminus E \mid J(x, \varphi) = 0\}$
	is contained in a~Borel set 
	$Z$
	of measure zero. 
	Put
	$\sigma = \varphi(Z)$.
	By the change-of-variable formula
	{\cite[Theorem 2]{Haj1993}},
	taking the injectivity of~%
	$\varphi$
	into account,
	we obtain
	\begin{equation*}
		\int\limits_{\Omega \setminus \Sigma} \chi_{Z} (x) J (x, \varphi) \, dx
		= \int\limits_{\Omega \setminus \Sigma} (\chi_{\sigma} \circ \varphi)(x) J (x, \varphi) \, dx 
		= \int\limits_{\Omega'} \chi_{\sigma} (y) \, dy. 
	\end{equation*}
	By construction,
	the integral in the left-hand side vanishes;
	consequently,
	$|\sigma| = 0$.
	On the other hand,
	since~%
	$\varphi$
	has the Luzin $\N^{-1}$-property,
	we have
	$|Z| = 0$.
\end{proof}

\section{Elasticity}\label{sec:elasticity}

The goal of this section is to prove the existence theorem for minimizing problem  of energy functional  in the classes
$\Hom(n-1, s, M;{\overline\varphi})$
where $s\in[1,\infty]$. Our prove works for all values of parameter $s$. 
It is worth to note that at $s=1$ some results of this section look like  some statements of paper~\cite{IwaOnn2009}. 
In our proof we use different arguments, such as the
boundedness of composition operators.
It gives an opportunity to apply them to new classes of deformations.
Naturally,
the proof of our main result 
differs substantially from previous works
and is based crucially on the results and methods of~\cite{Vod2012}.

Comparison of our results with those in another papers see in Remark~\ref{rem:free_outer_dist} and Section~\ref{sec:examples}.

\subsection{Polyconvexity}\label{sec:concept_poly}

Let 
$F= [f_{ij}]_{i,j=1,\dots, n}$
be a
$(n \times n)$-matrix.
For every pair of ordered tuples
$I=(i_1,i_2,\dots i_l)$,
$1 \leq i_1 < \dots < i_l \leq n$, 
and
$J=(j_1,j_2,\dots j_l)$,
$1 \leq j_1 < \dots < j_l \leq n$,
define
$l\times l$-minor
of the matrix 
$F$
\begin{equation*}
	F_{IJ} = \left|
	\begin{array}{ccc}
		f_{i_1 j_1} & \cdots & f_{i_1 j_l} \\
		\vdots & \ddots & \vdots \\
		f_{i_l j_1} & \cdots & f_{i_l j_l}
	\end{array}
	\right|.
\end{equation*}
Notice that 
$n\times n$-minor is the determinant of
$F$.
Let 
$F_{\#}$
be an ordered list of all minors of 
$F$.
Let
$F_{\#} \in D \subset \mathbb{R}^N$
for sufficiently large $N$
($N = {2n \choose n}$),
where
$D$
be a convex set with nonnegative 
$n\times n$-minor.	
\begin{definition}[\hspace{-.3pt}\cite{Ball1977}]
	A~function
	$W\colon\mathbb{M}^{n\times n}\to \R$
	is {\it polyconvex}
	if there exists a~convex function
	$G\colon D\to \R$,
	such that
	\begin{equation*}
		G(F_{\#})=W(F).
	\end{equation*}
\end{definition}

Examples of polyconvex
but not convex functions are
\begin{equation*}
	W(F)= \det F
\end{equation*}
and
\begin{equation*}
	W(F)=\tr \Adj F^T F = \|\Adj F^T F\|^2
\end{equation*}
(see, for example, \cite{Ciar1988}).

It is known that
for a~hyperelastic material with experimentally known Lam\'e coefficients
it can be constructed a~stored-energy function of an~Ogden material
(see~\cite{Ogd1972,Ciar1988} for more details).
On the other hand, a well-known Saint-Venant--Kirchhoff material,
is not polyconvex 
\cite[Theorem 4.10]{Ciar1988}.

\subsection{Existence theorem}\label{subsec:exist}

Let 
$\Omega$, 
$\Omega'\subset \R^n$
be two bounded domains with Lipschitz boundaries.
Recall that 
a~mapping 
$G\colon\Omega \times \R^{m} \to \overline{\R}$
enjoys the~{\it Carath\'eodory conditions}
whenever 
	$G(x, \cdot)$
	is continuous on
	$\R^{m}$
	for almost all
	$x\in \Omega$;
and
	$G(\cdot, a)$
	is measurable on~%
	$\Omega$
	for all
	$a\in\R^{m}$.

Consider 
a functional 
\begin{equation*}
	I(\varphi)=\int\limits_{\Omega}W(x, D\varphi(x))\,dx,
\end{equation*}
where
$W\colon \Omega \times \M^{n\times n}\to \R$
is a~stored-energy function
with the following properties:

{\bf (a)  polyconvexity:}
	there exists a~convex function
	$G \colon \Omega \times D
	\to \R$,
	$D \subset \R^N$,
	meeting Carath\'eodory conditions
	such that
	for all
	$F\in \M^{n \times n}$,
	$\det F \geq 0$,
	the~equality
	\begin{equation*}
		G(x, F_{\#})=W(x,F)
	\end{equation*}
	holds almost everywhere in~%
	$\Omega$;

{\bf (b) coercivity:}
	there exists a constant
	$\alpha>0$
	and a~function
	$g\in L_1(\Omega)$
	such that
	\begin{equation}\label{neq:coer}
		W(x,F)\geq \alpha |F|^n + g(x)
	\end{equation}
	for almost all
	$x\in \Omega$
	and all
	$F\in \M^{n \times n}$,
	$\det F \geq 0$.

Given
constants
$p$,
$q\geq 1$,
$M > 0$
define the class of admissible deformations
\begin{multline}\label{def:Hom}
	\Hom(p,q,M)=\{
	\varphi \colon \Omega \to \Omega' \text{ is a homeomorphism with finite distortion, }
	\\
	\varphi \in W^1_1(\Omega),\:
	I(\varphi) < \infty, \:
	\: J(x,\varphi) \geq 0 
	\text{ a.e.~in } \Omega, \: 
	\\
	K_O(\cdot, \varphi) \in L_{p} (\Omega), \:
	\| K_I(\cdot,\varphi) \mid L_{q} (\Omega)\| \leq M \},
\end{multline}
where
$K_O(x,\varphi)$
and
$K_I(x,\varphi)$
are the outer and the inner distortion functions defined by \eqref{def:outer_inner_distortion}.

For these families of admissible deformations  
we have natural embeddings 
$$
	\Hom(p,q_2,M_2) \subset \Hom(p,q_1,M_1)
$$
if 
$q_1 \leq q_2$
and
$M_2 |\Omega|^{\frac{1}{q_1} - \frac{1}{q_2}} \leq M_1$.
If 
$p_1 \leq p_2$
then
$$
	\Hom(p_2,q,M) \subset \Hom(p_1,q,M)
$$
also holds.

\begin{theorem}[Existence theorem] \label{thm:main}
	Suppose that
	conditions {\bf (a)}~and~{\bf (b)}
	on the function
	$W(x,F)$
	are fulfilled and
	the set~%
	$\Hom(n-1,s,M)$
	is nonempty,
	$M > 0$,
	$s > 1$.
	Then there exists at least one homeomorphic mapping
	\begin{equation*}
		\varphi_0\in\Hom(n-1,s,M)
		\quad\text{such that}\quad 
		I(\varphi_0)=\inf\limits\{I(\varphi),\varphi\in\Hom(n-1,s,M)\}.
	\end{equation*}
\end{theorem}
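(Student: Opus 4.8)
The plan is to run the direct method of the calculus of variations, with the key non-standard step being the verification that the weak limit of a minimizing sequence stays inside $\Hom(n-1,s,M)$ — in particular, that it remains a homeomorphism with finite distortion. This is exactly where Theorem~\ref{thm:ae_injectivity} and its corollaries do the heavy lifting.

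\textbf{Setting up the minimizing sequence.} First I would take a minimizing sequence $\{\varphi_k\}\subset\Hom(n-1,s,M)$, so that $I(\varphi_k)\to\inf\{I(\varphi):\varphi\in\Hom(n-1,s,M)\}$; the infimum is finite since the class is nonempty and, by coercivity~\eqref{neq:coer}, bounded below by $\int_\Omega g\,dx$. Coercivity gives $\alpha\int_\Omega|D\varphi_k|^n\,dx\le I(\varphi_k)-\int_\Omega g\,dx\le C$, so $\{D\varphi_k\}$ is bounded in $L_n(\Omega)$. Combined with the boundary/normalization control and the Poincaré inequality, the sequence $\{\varphi_k\}$ is bounded in $W^1_n(\Omega)$, hence (after passing to a subsequence) $\varphi_k\rightharpoonup\varphi_0$ weakly in $W^1_n(\Omega)$, and in particular weakly in $W^1_{n-1,\loc}(\Omega)$.

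\textbf{Controlling the polyconvex structure.} The constraint $\|K_I(\cdot,\varphi_k)\mid L_s(\Omega)\|\le M$ translates, via Remark~\ref{rem:connection_distortion}, into a uniform bound $\|\mathcal{K}_{\varphi_k,n}\mid L_{ns}(\Omega)\|\le M^{1/n}$, which by Theorem~\ref{thm:bounded_pullback} means the pullback operators $\widetilde{\varphi}_k^*\colon\mathcal{L}_{n/(n-1)}(\Omega',\Lambda^{n-1})\to\mathcal{L}_{r/(n-1)}(\Omega,\Lambda^{n-1})$ are uniformly bounded for the appropriate $r$. Independently, one extracts weak limits for the minors: $\Adj D\varphi_k\rightharpoonup H$ and $\det D\varphi_k\rightharpoonup \delta$ in suitable Lebesgue spaces. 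The standard weak-continuity-of-minors argument (as in Ball) identifies $H=\Adj D\varphi_0$ and $\delta=\det D\varphi_0=J(x,\varphi_0)$; since each $J(x,\varphi_k)\ge0$, the weak limit gives $J(x,\varphi_0)\ge0$ a.e.

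\textbf{Passing the constraints to the limit.} Now the three hypotheses of Corollary~\ref{cor:ae_injectivity_2} are in force: weak $W^1_{n-1,\loc}$ convergence with $J(x,\varphi_0)\ge0$, and totally bounded inner-distortion norms. Hence $\varphi_0$ is injective almost everywhere, and by Lemma~\ref{lem:J>0} (using that $\varphi_0$ inherits the Luzin $\N^{-1}$-property through Lemma~\ref{lem:N-1}) one upgrades this to $J(x,\varphi_0)>0$ a.e. To keep $\varphi_0$ in the class I must check that the distortion constraints survive the limit: the bound $\|K_I(\cdot,\varphi_0)\mid L_s\|\le M$ follows from weak lower semicontinuity of the convex functional $F\mapsto(|\Adj F|^n/(\det F)^{n-1})$ paired with weak convergence of the minors, and $K_O(\cdot,\varphi_0)\in L_{n-1}$ follows similarly from the energy and minor bounds. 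The topological upgrade from a.e.-injectivity to a genuine homeomorphism uses Theorem~\ref{thm:Raj} (discreteness and openness, since $K_O\in L_{n-1,\loc}$ and $K_I\in L_{s,\loc}$ with $s>1$), which is precisely why the hypothesis $s>1$ appears.

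\textbf{Concluding.} Finally, since $W$ is polyconvex and $(\Adj D\varphi_k,\det D\varphi_k)$ converge weakly to $(\Adj D\varphi_0,\det D\varphi_0)$, the sequential weak lower semicontinuity of $I$ gives $I(\varphi_0)\le\varliminf_k I(\varphi_k)=\inf\{I(\varphi):\varphi\in\Hom(n-1,s,M)\}$; as $\varphi_0\in\Hom(n-1,s,M)$, this forces equality and $\varphi_0$ is a minimizer. \textbf{The main obstacle} I anticipate is exactly the stability of the feasible set under weak convergence — guaranteeing that $\varphi_0$ is still a homeomorphism with finite distortion satisfying the distortion bounds — and this is the step for which the paper's injectivity machinery (Theorem~\ref{thm:ae_injectivity} and Corollary~\ref{cor:ae_injectivity_2}) together with Raj's openness/discreteness theorem is indispensable; without them one only recovers an a.e.-injective Sobolev map rather than a bona fide homeomorphism.
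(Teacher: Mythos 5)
Your proposal follows essentially the same route as the paper's own proof: the direct method with coercivity plus the Poincar\'e inequality giving $W^1_n$-boundedness of a minimizing sequence, weak continuity of minors to get $J(x,\varphi_0)\ge 0$ a.e., Corollary~\ref{cor:ae_injectivity_2} for almost everywhere injectivity, Theorem~\ref{thm:Raj} together with Lemmas~\ref{lem:N-1} and~\ref{lem:J>0} to upgrade $\varphi_0$ to a homeomorphism with $J(x,\varphi_0)>0$ a.e., semicontinuity of the distortion coefficients to preserve the constraints $K_O(\cdot,\varphi_0)\in L_{n-1}(\Omega)$ and $\|K_I(\cdot,\varphi_0)\mid L_s(\Omega)\|\le M$, and polyconvexity for the weak lower semicontinuity of $I$. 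The only slip is cosmetic: the class $\Hom(n-1,s,M)$ carries no boundary data, so the $L_n$-bound on $\varphi_k$ comes from the boundedness of the target domain $\Omega'$ rather than from any ``boundary/normalization control.''
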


If there is a homeomorphic Dirichlet data 
$\overline{\varphi}\colon \overline\Omega \to \overline\Omega'$,
$\overline{\varphi}\in W^1_n(\Omega)$,
$J(x, \overline \varphi) > 0$
a.e.\ in
$\Omega$,
$\|K_I(\cdot,\overline\varphi) \mid L_{q} (\Omega)\| \leq M$,
and
$I(\overline\varphi) < \infty$,
than we can define the classes of admissible deformations 
\begin{equation*}
	\Hom(p, q, M;{\overline\varphi})=\{
	\varphi \in \Hom(p, q, M), \: 
	\varphi|_{\partial \Omega}=\overline{\varphi}|_{\partial \Omega}
	\text{ a.e.~on } {\partial \Omega} \}.
\end{equation*}

Because of Theorem~\ref{thm:main} and compactness of the trace operator
(see \cite[Sect. 1.4.5--1.4.6]{Maz2011}
for instance)
it can be easily obtained the next existence theorem 
with respect to a Dirichlet boundary condition 
$\varphi|_{\partial \Omega}=\overline{\varphi}|_{\partial \Omega}$
a.e.\ on
${\partial \Omega}$.

\begin{corollary} \label{cor:ex_thm_dd}
	Suppose that
	conditions {\bf (a)}~and~{\bf (b)}
	on the function
	$W(x,F)$
	are fulfilled and
	the set~%
	$\Hom(n-1, s, M;{\overline\varphi})$
	is nonempty,
	$M > 0$,
 	$s \geq 1$.
	Then there exists at least one mapping
	$\varphi_0\in\Hom(n-1, s, M;{\overline\varphi})$
	such that
	\begin{equation*}
		I(\varphi_0)=\inf\{I(\varphi),\: {\varphi\in\Hom(n-1, s, M;{\overline\varphi})} \}.
	\end{equation*}
\end{corollary}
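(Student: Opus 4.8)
The plan is to mimic the proof of Theorem~\ref{thm:main}, running it on a minimizing sequence drawn from the smaller class $\Hom(n-1,s,M;\overline\varphi)$ rather than from $\Hom(n-1,s,M)$, and then to verify that the single additional constraint — the prescribed boundary values — survives passage to the weak limit. First I would fix a minimizing sequence $\{\varphi_k\}\subset\Hom(n-1,s,M;\overline\varphi)$, so that $I(\varphi_k)\to\inf\{I(\varphi):\varphi\in\Hom(n-1,s,M;\overline\varphi)\}$; this infimum is finite since the class is nonempty. Coercivity {\bf (b)} together with $g\in L_1(\Omega)$ bounds $\|D\varphi_k\mid L_n(\Omega)\|$, and because every $\varphi_k$ carries the common trace $\overline\varphi|_{\partial\Omega}$, the Poincar\'e inequality for functions with fixed boundary values bounds $\|\varphi_k\mid W^1_n(\Omega)\|$ uniformly. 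Hence, after passing to a subsequence, $\varphi_k\rightharpoonup\varphi_0$ weakly in $W^1_n(\Omega)$.

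Since $\Hom(n-1,s,M;\overline\varphi)\subset\Hom(n-1,s,M)$, the entire argument of Theorem~\ref{thm:main} applies verbatim to this subsequence. Concretely, the uniform bound $\|K_I(\cdot,\varphi_k)\mid L_s(\Omega)\|\leq M$ makes the inner distortion norms totally bounded, so Corollary~\ref{cor:ae_injectivity_2} (valid already for $s\geq1$) gives that $\varphi_0$ is injective almost everywhere; combining a.e.\ injectivity with the Luzin $\N^{-1}$-property yields $J(x,\varphi_0)>0$ a.e.\ via Lemma~\ref{lem:J>0}; the finite-distortion regularity $\varphi_0\in W^1_n$ with $K_O(\cdot,\varphi_0)\in L_{n-1}$ and $K_I(\cdot,\varphi_0)\in L_s$ gives discreteness and openness through Theorem~\ref{thm:Raj}, which together with a.e.\ injectivity promotes $\varphi_0$ to a homeomorphism onto $\Omega'$. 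Polyconvexity {\bf (a)} furnishes the sequential weak lower semicontinuity $I(\varphi_0)\leq\varliminf_k I(\varphi_k)$, while the weak convergence of $\varphi_k$, $\Adj D\varphi_k$ and $J(\cdot,\varphi_k)$ together with convexity forces the distortion constraints to persist, so that $\varphi_0\in\Hom(n-1,s,M)$.

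The one genuinely new point is checking $\varphi_0|_{\partial\Omega}=\overline\varphi|_{\partial\Omega}$. Here I would invoke the compactness of the trace operator $\mathrm{Tr}\colon W^1_n(\Omega)\to L_1(\partial\Omega)$ on the Lipschitz domain $\Omega$ (see~\cite[Sect.~1.4.5--1.4.6]{Maz2011}): weak convergence $\varphi_k\rightharpoonup\varphi_0$ in $W^1_n(\Omega)$ forces, along a subsequence, the strong convergence $\mathrm{Tr}\,\varphi_k\to\mathrm{Tr}\,\varphi_0$ in $L_1(\partial\Omega)$. As $\mathrm{Tr}\,\varphi_k=\overline\varphi|_{\partial\Omega}$ for every $k$, the limit equals $\overline\varphi|_{\partial\Omega}$, whence $\varphi_0\in\Hom(n-1,s,M;\overline\varphi)$ and $I(\varphi_0)=\inf\{I(\varphi):\varphi\in\Hom(n-1,s,M;\overline\varphi)\}$.

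From the corollary's own standpoint the trace passage is routine, so the main obstacle is really the inherited one: ensuring the weak limit is a genuine homeomorphism rather than merely a.e.\ injective, and that the constrained infimum is attained inside the class. This is transparent when $s>1$, where Theorem~\ref{thm:Raj} supplies openness and discreteness directly; the endpoint $s=1$ admitted in the statement is the delicate case, since there the homeomorphism property no longer follows from Theorem~\ref{thm:Raj} and must instead be recovered by an auxiliary degree or homotopy argument — in line with the paper's caveat that $s=1$ is covered only in some cases.
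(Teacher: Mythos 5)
Your argument for $s>1$ is essentially the paper's own: run the proof of Theorem~\ref{thm:main} on a minimizing sequence drawn from the constrained class, then recover the boundary condition from compactness of the trace operator on the Lipschitz domain $\Omega$ --- this is exactly how the paper disposes of Corollary~\ref{cor:ex_thm_dd} for $s>1$. The genuine gap is the endpoint $s=1$. The corollary asserts existence for \emph{all} $s\geq 1$, and your reading of the introduction's caveat is backwards: ``covers the case $s=1$ in some cases'' means that $s=1$ is covered precisely for the classes with prescribed boundary values (this corollary) and with a fixed homotopy class (Corollary~\ref{cor:ex_thm_hc}), while only the unconstrained Theorem~\ref{thm:main} is restricted to $s>1$. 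So you cannot discharge $s=1$ by deferring to an unspecified ``auxiliary degree or homotopy argument'': it is part of the claim, and your proof as written has no substitute for Theorem~\ref{thm:Raj}, which is unavailable at $s=1$.

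The paper fills this gap in Remark~\ref{rem:boundary_condition}, and the mechanism uses the boundary data a second time, in a way your proof never exploits. Since $\| K_I(\cdot,\varphi_k) \mid L_{1} (\Omega)\| \leq M$, the inverse homeomorphisms $\psi_k=\varphi_k^{-1}$ satisfy
\begin{equation*}
	\int\limits_{\Omega'} |D \psi_k(y)|^n  \, dy
	= \int\limits_\Omega K_I(x,\varphi_k) \, dx \leq M,
\end{equation*}
so $\{\psi_k\}$ is bounded in $W^1_n(\Omega')$, while $\{\varphi_k\}$ is bounded in $W^1_n(\Omega)$ by coercivity. The Moser oscillation estimate from the proof of Lemma~\ref{lem:prop_inv} then applies to \emph{both} sequences, so along a subsequence $\varphi_k\to\varphi_0$ and $\psi_k\to\psi_0$ locally uniformly, with $\varphi_0$, $\psi_0$ continuous and $\psi_0(\varphi_0(x))=x$, $\varphi_0(\psi_0(y))=y$ wherever $\varphi_0(x)\notin\partial\Omega'$ and $\psi_0(y)\notin\partial\Omega$. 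Because $\varphi_0$ coincides on $\partial\Omega$ with the homeomorphism $\overline\varphi$, the topological degree satisfies $\mu(y,\Omega,\varphi_0)=1$ for every $y\notin\varphi_0(\partial\Omega)$; hence every $y\in\Omega'$ has a preimage in $\Omega$, no interior point is sent to $\partial\Omega'$, and $\varphi_0\colon\Omega\to\Omega'$ is a homeomorphism with inverse $\psi_0$. This degree-plus-uniform-convergence argument entirely replaces the openness/discreteness input of Theorem~\ref{thm:Raj} at $s=1$; without it (or an equivalent), your proof establishes the corollary only for $s>1$.
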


In some cases it is more convenient to consider
deformations of the same homotopy class as a given homeomorphism
$\overline\varphi$
instead of deformations with prescribed boundary values.

In this case 
we can define the next class of admissible deformations
\begin{multline*}
	\Hom(p,q,M;\overline\varphi, hom)=\{
	\varphi \in \Hom(p, q, M), \: 
	\\
	\varphi \text{ belongs to the same homotopy class as } \overline{\varphi}
	\}.
\end{multline*}

\begin{corollary} \label{cor:ex_thm_hc}
	Suppose that
	conditions {\bf (a)}~and~{\bf (b)}
	on the function
	$W(x,F)$
	are fulfilled and
	the set~%
	$\Hom(n-1, s, M;{\overline\varphi}, hom)$
	is nonempty,
	$M > 0$,
	$s \geq 1$.
	Then there exists at least one mapping
	$\varphi_0\in\Hom(n-1, s, M;{\overline\varphi}, hom)$
	such that
	\begin{equation*}
		I(\varphi_0)=\inf\{I(\varphi),\: {\varphi\in\Hom(n-1, s, M;{\overline\varphi}, hom)} \}.
	\end{equation*}
\end{corollary}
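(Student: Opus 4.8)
The plan is to run the direct method of the calculus of variations exactly as in the proof of Theorem~\ref{thm:main}, and then to check the single additional point that the homotopy constraint survives the passage to the weak limit. First I would take a minimizing sequence $\{\varphi_k\}\subset\Hom(n-1,s,M;\overline\varphi,hom)$, which is legitimate since this class is nonempty by hypothesis and $I$ is bounded below on it by the coercivity inequality~\eqref{neq:coer} together with $g\in L_1(\Omega)$. Coercivity gives $\alpha\int_\Omega|D\varphi_k|^n\,dx\leq I(\varphi_k)-\int_\Omega g\,dx\leq C$, and since the images $\varphi_k(\Omega)=\Omega'$ lie in a fixed bounded set, $\{\varphi_k\}$ is bounded in $W^1_n(\Omega)$. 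Passing to a subsequence I would assume $\varphi_k\rightharpoonup\varphi_0$ weakly in $W^1_n(\Omega)$ (hence weakly in $W^1_{n-1,\loc}(\Omega)$, and a.e.\ after a further subsequence), with $J(x,\varphi_0)\geq0$ a.e.

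Next I would verify that $\varphi_0$ is an admissible homeomorphism. The uniform bound $\|K_I(\cdot,\varphi_k)\mid L_s(\Omega)\|\leq M$ makes the inner distortion operator norms totally bounded, so Corollary~\ref{cor:ae_injectivity_2} applies and $\varphi_0$ is injective almost everywhere. Weak lower semicontinuity of the distortion functionals then yields $K_O(\cdot,\varphi_0)\in L_{n-1}(\Omega)$ and $\|K_I(\cdot,\varphi_0)\mid L_s(\Omega)\|\leq M$, so $\varphi_0$ has finite distortion, and Lemma~\ref{lem:J>0} upgrades this to $J(x,\varphi_0)>0$ a.e. By Theorem~\ref{thm:Raj} (using $s>1$; the case $s=1$ is handled separately below) the map $\varphi_0$ is continuous, discrete and open, and being injective a.e.\ it is in fact a homeomorphism of $\Omega$ onto $\Omega'$. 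Finally, the weak continuity of the minors $(D\varphi_k,\Adj D\varphi_k,J(\cdot,\varphi_k))$ together with the polyconvex structure of $W$ gives $I(\varphi_0)\leq\varliminf_k I(\varphi_k)$, so that $\varphi_0$ minimizes $I$ over $\Hom(n-1,s,M)$ and lies in that class.

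The genuinely new step, and the one I expect to be the main obstacle, is to show that $\varphi_0$ belongs to the \emph{same homotopy class} as $\overline\varphi$. Here I would exploit the strong convergence already built into the machinery: the Sobolev embedding makes $\varphi_k\to\varphi_0$ locally uniformly on $\Omega$, while Lemma~\ref{lem:prop_inv} gives $\psi_k=\varphi_k^{-1}\to\psi_0=\varphi_0^{-1}$ locally uniformly on $\Omega'$. Thus for $k$ large the homeomorphism $\varphi_0$ and the homeomorphism $\varphi_k$ are uniformly close on compacta together with their inverses, and invoking the stability of the homotopy class of a homeomorphism under such convergence one concludes that $\varphi_0$ is homotopic to $\varphi_k$, hence to $\overline\varphi$. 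Making this stability statement precise (specifying through which class of maps the homotopy is taken, and controlling the behaviour near $\partial\Omega$) is the delicate point: it is where the global topological nature of the constraint, rather than the purely analytic estimates of the previous steps, must be dealt with.

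It remains to treat the borderline exponent $s=1$ allowed in the statement. In that regime Theorem~\ref{thm:Raj} no longer guarantees openness, so instead of deducing topological injectivity from discreteness and openness I would extract it from the homotopy machinery itself: the locally uniform convergence of $\varphi_k$ and $\psi_k$ forces $\psi_0\circ\varphi_0=\mathrm{id}$ and $\varphi_0\circ\psi_0=\mathrm{id}$ on the sets where the limits are defined, which together with a.e.\ injectivity and $J(\cdot,\varphi_0)>0$ a.e.\ promotes $\varphi_0$ to a homeomorphism and simultaneously fixes its homotopy class. With $\varphi_0$ thus shown to be an admissible minimizer in $\Hom(n-1,s,M;\overline\varphi,hom)$, the corollary follows.
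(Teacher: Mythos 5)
Your overall skeleton --- the direct method, coercivity giving a $W^1_n$-bound, the admissibility checks via Corollary~\ref{cor:ae_injectivity_2}, semicontinuity of the distortion coefficients, Lemma~\ref{lem:J>0}, Theorem~\ref{thm:Raj} for $s>1$, and polyconvexity for lower semicontinuity of $I$ --- is exactly the paper's proof of Theorem~\ref{thm:main}, which the paper reuses for this corollary. The genuine gap is the step you yourself flag and then leave open: you appeal to an unspecified ``stability of the homotopy class of a homeomorphism under locally uniform convergence,'' and in the case $s=1$ you assert that the identities $\psi_0\circ\varphi_0=\mathrm{id}$ and $\varphi_0\circ\psi_0=\mathrm{id}$ ``on the sets where the limits are defined,'' together with a.e.\ injectivity and $J(\cdot,\varphi_0)>0$ a.e., promote $\varphi_0$ to a homeomorphism in the right homotopy class. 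This does not follow: those identities hold only at points $x$ with $\varphi_0(x)\notin\partial\Omega'$ (respectively $y$ with $\psi_0(y)\notin\partial\Omega$), and nothing in your argument excludes that $\varphi_0$ carries interior points into $\partial\Omega'$, or that $\varphi_0$ fails to be onto $\Omega'$. Almost-everywhere statements cannot rule out such degeneration, which would occur on a closed set of measure zero, so measure-theoretic nondegeneracy alone never upgrades to a global bijection.

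The paper closes exactly this hole with a topological degree argument (Remark~\ref{rem:boundary_condition}), and this is where the homotopy hypothesis is used as a tool rather than merely verified: since every $\varphi_k$ lies in the homotopy class of the homeomorphism $\overline\varphi$, and the degree is a homotopy invariant stable under locally uniform convergence, one gets $\mu(y,\Omega,\varphi_0)=1$ for all $y\notin\varphi_0(\partial\Omega)$. Hence every $y\in\Omega'$ has a preimage $x\in\Omega$; passing to the limit in $\psi_k\circ\varphi_k(x)=x$ then gives $\psi_0(y)=x$, and symmetrically $\varphi_0(x)\in\Omega'$ for all $x\in\Omega$. This simultaneously yields surjectivity, global injectivity, the homeomorphism property, and membership of $\varphi_0$ in the prescribed class, uniformly for all $s\geq 1$. (For $s>1$ the paper can argue alternatively: openness and discreteness from Theorem~\ref{thm:Raj} combined with the fact quoted from \cite{IwaOnn2009} in Remark~\ref{rem:free_homeo}, that a continuous open discrete map in the homotopy class of a homeomorphism is itself a homeomorphism.) A smaller but real error in your write-up: the locally uniform convergence $\varphi_k\to\varphi_0$ cannot come from ``the Sobolev embedding,'' since $W^1_n$ does not embed into continuous functions at the critical exponent; it comes from the Moser-type oscillation estimate for monotone $W^1_n$-mappings, i.e.\ the same argument that Lemma~\ref{lem:prop_inv} applies to the inverses $\psi_k$.
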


\begin{remark}\label{rem:free_homeo}
	Note that we can omit the condition that 
	$\varphi$
	is a homeomorphism in the definition of 
	$\Hom(n-1, s, M;{\overline\varphi}, hom)$
	(and $\Hom(n-1, s, M;{\overline\varphi})$)
	if 
	$s > 1$.
	Since
	$\varphi \in \Hom(n-1, s, M;{\overline\varphi}, hom)$
	belongs to 
	$W^1_n(\Omega)$,
	$K_O(\cdot,\varphi) \in L_{n-1}(\Omega)$
	and
	$K_I(\cdot,\varphi) \in L_{s}(\Omega)$,
	$s > 1$,
	the mapping 
	$\varphi$
	is continuous, open and discrete
	(Theorem~\ref{thm:Raj} and 
	\cite{Raj2010}).
	Also, it is known that
	continuous open discrete mapping
	$\varphi$,
	with the same homotopy class as a given homeomorphism
	$\overline{\varphi} \in W^1_n (\Omega)$,
	is also a homeomorphism
	of 
	$\Omega$
	onto
	$\Omega'$
	(see \cite{IwaOnn2009} for instance).
\end{remark}

\begin{remark}\label{rem:free_outer_dist}
	Added to this is the fact that if we have boundary conditions, 
	we do not need restriction on
	$K_O(x,\varphi)$ 
	(see Remark~\ref{rem:boundary_condition} for details). 
	Thereafter for
	$s \geq 1$
	instead of 
	$\Hom(n-1, s, M;{\overline\varphi})$
	and
	$\Hom(n-1, s, M;{\overline\varphi}, hom)$
	we can consider classes 
	\begin{align}\label{admissible_classes}
		\A(s, M;{\overline\varphi})= & \{
		\varphi \in \A(s, M), \: 
		\varphi|_{\partial \Omega}=\overline{\varphi}|_{\partial \Omega}
		\text{ a.e.~on } {\partial \Omega} \} \quad \text{and} 
		\\
		\A(s,M;\overline\varphi, hom)= & \{
		\varphi \in \A(s, M), \: \nonumber
		\\
		& \qquad \varphi \text{ belongs to the same homotopy class as } \overline{\varphi}
		\}, 
	\end{align}
	where
	\begin{multline*}
		\A(s,M)=\{
		\varphi \colon \Omega \to \Omega' \text{ is a homeomorphism with finite distortion, } \\
		\varphi \in W^1_1(\Omega),\:
		I(\varphi) < \infty, \:
		\: J(x,\varphi) \geq 0 
		\text{ a.e.~on } \Omega, \: 
		\\
		\| K_I(\cdot,\varphi) \mid L_{s} (\Omega)\| \leq M \}.
	\end{multline*}
	Note that, for a mapping being of the class 
	$\A(1,M)$
	we ask the same requirements as those in the paper \cite{IwaOnn2009}.
\end{remark}

\subsection{Proof of the existence theorem}\label{subsec:proof}

In this section we prove the existence of a~minimizing mapping
for the functional 
\begin{equation*}
	\overline{I}(\varphi)=I(\varphi)- \int\limits_\Omega g(x)\, dx.
\end{equation*}

Observe now that
the coercivity~\eqref{neq:coer}
of the function~%
$W$
and the corollary of the Poincar\'e inequality
(see~\cite[Theorem 6.1-8]{Ciar1988} for instance)
ensure the existence of constants
$c>0$
and
$d\in\R$
such that
\begin{equation}\label{est:lower}
	\overline{I}(\varphi) = I(\varphi)- \int\limits_\Omega g(x)\, dx
	\geq
	c\|\varphi\mid W^1_n(\Omega)\|^n + d
\end{equation}
for every mapping
$\varphi \in \Hom =\Hom(n-1,s,M)$,
where 
$\Hom$
is defined by~\eqref{def:Hom}.

Take a~minimizing sequence
$\{\varphi_k\}$
for the functional~%
$\overline{I}$.
Then
\begin{equation*}
	\lim\limits_{k\rightarrow \infty} \overline{I}(\varphi_k)
	= \inf\limits_{\varphi\in\Hom} \overline{I}(\varphi).
\end{equation*}
By~\eqref{est:lower} and the assumption
$\inf\limits_{\varphi\in\Hom} \overline{I} (\varphi)<\infty$,
the sequence
$\{\varphi_k\}_{k\in \mathbb{N}}$
is bounded in 
$W^1_n(\Omega)$.

Remind that Sobolev space 
$W^1_{n}$
has 
the ``continuity'' property of minors~--- 
rank-$l$ minors of  
$D\varphi_k$ 
are weakly converging if 
$\varphi_k$ 
belongs to 
$W^1_{p}$ 
with
$p \geq l$,
$1 \leq l < n$ 
\cite{Ball1977,Mor1966,Resh1967,Resh1982}.
In the case 
$l = n$
there is no weak convergence but something close to it
\cite[\S 4.5]{Resh1982}.
For achieving weak convergence of Jacobians,  
it is necessary to impose some 
additional conditions, 
for instance,
nonnegativity of Jacobians almost everywhere
\cite{Mul1990}.
Here it will be convenient for us  the next formulation of this assertion,
which can be found in \cite{GehrIwa1999}.

\begin{lemma}[Weak continuity of minors]\label{lem:weak_cont}
	Let
	$\Omega$
	be a~domain in
	$\R^n$
	and a sequence 
	$f_k\colon \Omega \to \mathbb{R}^n$,
	$k = 1$,
	$2, \dots$,
	converge weakly in
	$W^1_{n,\loc}(\Omega)$
	to a mapping 
	$f_0$.
	For  
	$l$-tuples 
	$1 \leq i_1 < \dots < i_l \leq n$
	and
	$1 \leq j_1 < \dots < j_l \leq n$
	the equality 
	\begin{equation}\label{weak_cont_minors}
		\lim\limits_{k\to \infty} \int\limits_{\Omega} \theta 
			\frac{\partial (f_k^{i_1},\dots,f_k^{i_l})}{\partial (x_{j_1},\dots,x_{j_l})} \, dx = 
		\int\limits_{\Omega} \theta 
			\frac{\partial (f_0^{i_1},\dots,f_0^{i_l})}{\partial (x_{j_1},\dots,x_{j_l})} \, dx
	\end{equation}
	holds
	for every 
	$\theta $
	in
	$\overset{\circ}{L}_{n/(n-l)}(\Omega)$,
	the space of functions in
	$L_{n/(n-l)}(\Omega)$ 
	with compact support in
	$\Omega$,
	and corresponding 
	$l \times l$
	minors%
	\footnote{
		i.e.\ determinants of the matrix that is formed
		by taking the elements of the original matrix 
		from the rows whose indexes are in 
		$({i_1},i_2,\dots,{i_l})$ 
		and columns whose indexes are in 
		$({j_1},j_2,\dots,{j_l})$
	}
	of
	$D f_k$
	and
	$D f_0$,
	$l=1,2,\dots,n-1$.
	
	Moreover, 
	if in addition
	$J(x, f_k) \geq 0$
	a.e.\ in
	$\Omega$,
	the equality \eqref{weak_cont_minors}
	holds for 
	$l=n$.
\end{lemma}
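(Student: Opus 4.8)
The plan is to argue by induction on the order $l$ of the subdeterminants, exploiting the fact that every minor of a gradient carries a divergence (null-Lagrangian) structure. The exponent bookkeeping is forced by the integrability $f_k \in W^1_{n,\loc}(\Omega)$: an $l \times l$ minor of $Df_k$ is a sum of products of $l$ entries of $Df_k$, each lying in $L_n$, hence the minor lies in $L_{n/l,\loc}(\Omega)$, whose dual exponent is exactly $n/(n-l)$. This is precisely why the admissible test functions range over $\overset{\circ}{L}_{n/(n-l)}(\Omega)$. For the base case $l=1$ the ``minors'' are the partial derivatives $\partial f_k^{i}/\partial x_{j}$, which by hypothesis converge weakly to $\partial f_0^{i}/\partial x_{j}$ in $L_{n,\loc}(\Omega)$; pairing against $\theta \in \overset{\circ}{L}_{n/(n-1)}(\Omega)$, the dual of $L_n$, gives \eqref{weak_cont_minors} at once.

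For the inductive step with $2 \le l \le n-1$, I would expand an $l$-minor along the row carried by $f^{i_1}$ and invoke the Piola identity (the rows of the cofactor matrix are divergence-free) to rewrite it in divergence form,
\begin{equation*}
	\frac{\partial(f^{i_1},\dots,f^{i_l})}{\partial(x_{j_1},\dots,x_{j_l})}
	= \sum_{s=1}^{l} (-1)^{s+1}\, \frac{\partial}{\partial x_{j_s}}\bigl(f^{i_1}\, m_s\bigr),
\end{equation*}
where each $m_s$ is an $(l-1)\times(l-1)$ minor of $Df$. For a smooth $\theta \in C_0^\infty(\Omega)$, integrating by parts turns $\int_\Omega \theta\,[\,l\text{-minor}\,]\,dx$ into a finite sum of terms $-\int_\Omega (\partial_{j_s}\theta)\, f_k^{i_1}\,(m_s)_k\,dx$. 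The critical compact embedding $W^1_n \hookrightarrow\hookrightarrow L_r$ for every finite $r$ (see \cite{Adams1975}) gives $f_k^{i_1} \to f_0^{i_1}$ strongly in $L_{n/(n-l+1),\loc}(\Omega)$, while the induction hypothesis gives $(m_s)_k \rightharpoonup (m_s)_0$ weakly in $L_{n/(l-1),\loc}(\Omega)$; these two exponents are Hölder-conjugate. Writing $f_k^{i_1}(m_s)_k = f_0^{i_1}(m_s)_k + (f_k^{i_1}-f_0^{i_1})(m_s)_k$ passes the first term to the limit by weak convergence against the fixed test function $(\partial_{j_s}\theta)f_0^{i_1}$ and kills the second by Hölder, using the uniform $L_{n/(l-1),\loc}$-bound on $(m_s)_k$. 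Finally I extend from $C_0^\infty(\Omega)$ to arbitrary $\theta \in \overset{\circ}{L}_{n/(n-l)}(\Omega)$ by density, since the $l$-minors are uniformly bounded in $L_{n/l,\loc}(\Omega)$.

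The determinant case $l=n$ is the main obstacle, and it is exactly here that the hypothesis $J(x,f_k) \ge 0$ becomes indispensable. The same divergence identity and integration by parts yield $\int_\Omega \theta\, J(x,f_k)\,dx \to \int_\Omega \theta\, J(x,f_0)\,dx$ for $\theta \in C_0^\infty(\Omega)$ — distributional convergence of the Jacobians — using the weak convergence of the $(n-1)$-cofactors established above together with the strong convergence of $f_k$. This by itself does \emph{not} secure \eqref{weak_cont_minors} against merely bounded, compactly supported $\theta$ (the case $l=n$, where $n/(n-l)=\infty$), because $\{J(\cdot,f_k)\}$ is only bounded in $L_{1,\loc}$ and could concentrate. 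The nonnegativity removes this: by Müller's higher-integrability theorem \cite{Mul1990}, $J(\cdot,f_k)\ge 0$ forces a local $L\log L$ bound, hence equi-integrability, so $\{J(\cdot,f_k)\}$ is weakly precompact in $L_{1,\loc}(\Omega)$. Combined with the distributional convergence, this upgrades to weak $L_1$ convergence and delivers \eqref{weak_cont_minors} for $l=n$. The delicate point throughout is precisely this final equi-integrability step; the lower-order cases are routine once the Piola divergence form is in hand.
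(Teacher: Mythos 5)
The paper gives no proof of this lemma to compare against: it is imported as a known result, with the statement taken from \cite{GehrIwa1999} and the surrounding text crediting the cases $l<n$ to Ball, Morrey and Reshetnyak and the case $l=n$ under nonnegative Jacobians to M\"uller \cite{Mul1990}. Your argument is correct, and it is in essence a self-contained reconstruction of exactly those cited proofs: the induction on $l$ via the Piola (null-Lagrangian) divergence identity, the strong-times-weak pairing with the H\"older-conjugate exponents $n/(n-l+1)$ and $n/(l-1)$ supplied by Rellich--Kondrachov, and the density upgrade from $C_0^\infty(\Omega)$ to $\overset{\circ}{L}_{n/(n-l)}(\Omega)$ constitute Ball's scheme for $l\le n-1$; your observation that density fails at $l=n$ (where $n/(n-l)=\infty$) and must be replaced by equi-integrability --- M\"uller's local $L\log L$ estimate under $J(x,f_k)\ge 0$, then Dunford--Pettis --- is precisely M\"uller's contribution, and you correctly identify this as the only place where nonnegativity enters. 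Two points should be made explicit in a full write-up, though both are routine: (i) the divergence identity you expand along the $i_1$-th row holds for $W^1_{n,\loc}$ maps only in the distributional sense and needs a smooth-approximation argument (the products $f^{i_1}m_s$ lie in $L_{1,\loc}$ because $W^1_n$ embeds locally into every $L_r$, $r<\infty$, while the $(l-1)$-minors lie in $L_{n/(l-1),\loc}$); (ii) for $l=n$, after extracting a weakly $L_1$-convergent subsequence by Dunford--Pettis, you should note that the distributional convergence already established identifies every subsequential weak limit with $J(\cdot,f_0)$, so the whole sequence converges weakly in $L_1$ on each compact set. With these remarks added, your proof is complete and, unlike the paper, self-contained.
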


Hence there exists a~minimizing sequence fulfilling the conditions
\begin{equation*}
	\begin{cases}
		\varphi_k \longrightarrow \varphi_0 &
		\text{ weakly in }  W^1_{n}(\Omega),
		\\
		\Adj D\varphi_k \longrightarrow \Adj D\varphi_0 &
		\text{ weakly in }  
		L_{\frac{n}{n-1},\loc}(\Omega),
		\\
		\dots & \\
		J (\cdot,\varphi_k) \longrightarrow J (\cdot,\varphi_0) &
		\text{ weakly in } L_{1,\loc}(\Omega)
	\end{cases}
\end{equation*}
as
$k \rightarrow \infty$,
where
$\varphi_0$
guarantees the sharp lower bound 
$\overline{I} (\varphi_0)=\inf\limits_{\varphi\in\Hom} \overline{I} (\varphi)$.
It remains to verify that
$\varphi_0 \in \Hom$.
To this end,
we need the properties of mappings of~%
$\Hom$.

\begin{lemma}
	The limit mapping
	$\varphi_0$
	satisfies
	$J(\cdot,\varphi_0)\geq 0$
	a.e.\ in 
	$\Omega$.
\end{lemma}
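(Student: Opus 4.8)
The plan is to exploit the weak convergence of the Jacobians that has already been arranged for the minimizing sequence. Recall that $\varphi_k \to \varphi_0$ weakly in $W^1_n(\Omega)$ and $J(x,\varphi_k) \geq 0$ a.e.\ in $\Omega$, so the hypotheses of the case $l = n$ of Lemma~\ref{lem:weak_cont} are met. I would therefore invoke that lemma directly to obtain
\begin{equation*}
	\lim\limits_{k\to\infty} \int\limits_\Omega \theta\, J(x,\varphi_k)\, dx
	= \int\limits_\Omega \theta\, J(x,\varphi_0)\, dx
\end{equation*}
for every $\theta$ in $\overset{\circ}{L}_\infty(\Omega)$, i.e.\ every bounded measurable function with compact support in $\Omega$.

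First I would restrict to nonnegative test functions. For any such $\theta \geq 0$, each integral on the left is nonnegative, since $\theta \geq 0$ and $J(\cdot,\varphi_k) \geq 0$ a.e.; passing to the limit yields
\begin{equation*}
	\int\limits_\Omega \theta\, J(x,\varphi_0)\, dx \geq 0
	\qquad \text{for all } \theta \in \overset{\circ}{L}_\infty(\Omega),\ \theta \geq 0.
\end{equation*}

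Next I would deduce pointwise nonnegativity by contradiction. Suppose the set $N = \{x \in \Omega : J(x,\varphi_0) < 0\}$ had positive measure. Exhausting $\Omega$ by an increasing sequence of compact subsets and using inner regularity of Lebesgue measure, I could select a compact set $K \subset N$ with $|K| > 0$. Taking $\theta = \chi_K$, which is bounded with compact support, the inequality above gives $\int_K J(x,\varphi_0)\, dx \geq 0$; but the integrand is strictly negative on a set of positive measure, so $\int_K J(x,\varphi_0)\, dx < 0$, a contradiction. Hence $|N| = 0$ and $J(x,\varphi_0) \geq 0$ for almost all $x \in \Omega$.

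I do not expect a genuine obstacle in this argument: all the analytic difficulty is concentrated in the weak continuity of the Jacobian, which is exactly the content of Lemma~\ref{lem:weak_cont} and may be used as a black box precisely because the sign condition $J(\cdot,\varphi_k) \geq 0$ secures the $l = n$ case. The remainder is the routine fact that a weak $L_1$-limit of nonnegative functions is nonnegative, tested against characteristic functions of compact sets.
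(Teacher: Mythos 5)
Your proof is correct and follows essentially the same route as the paper: the paper also derives the conclusion from the weak convergence $J(\cdot,\varphi_k) \rightharpoonup J(\cdot,\varphi_0)$ in $L_{1,\loc}(\Omega)$ (the $l=n$ case of Lemma~\ref{lem:weak_cont}, available precisely because $J(\cdot,\varphi_k)\geq 0$ a.e.), combined with the fact that a weak $L_1$-limit of nonnegative functions is nonnegative. You merely spell out the routine measure-theoretic step (testing against characteristic functions of compact sets) that the paper leaves implicit with a citation to Reshetnyak.
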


The inequality
$J(\cdot,\varphi_0)\geq 0$
follows directly from the weak convergence of
$J(\cdot,\varphi_k)$
in
$L_{1}(K)$,
for every 
$K \Subset \Omega$.
Among other things,
we can establish the nonnegativity of the Jacobian
by using weak convergence 
(see \cite[\S{4.5}]{Resh1982}).


Now 
by Corollary~%
\ref{cor:ae_injectivity_2}
the mapping
$\varphi_0$
is almost-everywhere injective
(moreover according to the proof of Theorem~\ref{thm:ae_injectivity}, injectivety can be lost only if points go to the boundary).
Furthermore,
since 
$\varphi_0 \in W^1_n(\Omega)$
has finite distortion
(by Lemma~\ref{lem:bounded_comp} and Lemma~\ref{lem:FD}) 
and if
$K_O(\cdot,\varphi_0) \in L_{n-1}(\Omega)$
and
$K_I(\cdot,\varphi_0) \in L_{s}(\Omega)$,
$s>1$,
then the mapping
$\varphi_0$
is continuous, discrete and open
by Theorem~%
\ref{thm:Raj}.
Therefore so
$\varphi_0$
is a~homeomorphism.

Moreover, the proof of Theorem~\ref{thm:ae_injectivity} results in 
the Lusin
$\N^{-1}$-property for 
$\varphi_0$
(see Lemma~\ref{lem:N-1}).
Then, Lemma~\ref{lem:J>0}
implies
the limit mapping
$\varphi_0$
satisfies the~strict inequality
$J(x,\varphi_0) > 0$
a.e.~in~%
$\Omega$.

\begin{remark}\label{rem:boundary_condition}
	Theorem~\ref{thm:Raj}
	is not known if 
	$s = 1$. 
	However, we include the case
	$s=1$
	for classes
	$\Hom(n-1,s,M; \overline\varphi)$
	and
	$\Hom(n-1,s,M; \overline\varphi, hom)$
	($\A(s,M; \overline\varphi)$
	and
	$\A(s,M; \overline\varphi, hom)$).
	Indeed, whereas both
	$\varphi_k$ 
	and 
	$\psi_k$ 
	belong to Sobolev spaces
	$W^1_n(\Omega)$
	and 
	$W^1_n(\Omega')$,
	the same arguments as in Lemma~\ref{lem:prop_inv} ensure that
	there are a~sequence of homeomorphisms
	$\{\varphi_k\}_{k\in \mathbb{N}}$
	and 
	a~sequence of inverse homeomorphisms
	$\{\psi_k\}_{k\in \mathbb{N}}$,
	which converge locally uniformly
	to
	$\varphi_0$
	and
	$\psi_0$
	respectively.

	Then 
	$\varphi_0$
	and
	$\psi_0$
	are continuous and
	\begin{equation*}
		\psi_0 \circ \varphi_0 (x) = x, \qquad \varphi_0 \circ \psi_0 (y) = y,
	\end{equation*} 
	if 
	$\varphi_0 (x) \not \in \partial \Omega'$
	and
	$\psi_0 (y) \not \in \partial \Omega$.
	
	Since 
	$\varphi_0$
	coincides with given homeomorphism 
	$\overline \varphi$
	on the boundary
	(or is in the same homotopy class),
	$\mu(y,\Omega,\varphi_0) = 1$ 
	for 
	$y \not \in \varphi_0(\partial \Omega)$.
	Therefore for 
	$y\in\Omega'$
	there is 
	$x \in \Omega$ 
	such that 
	$\varphi_0(x) = y \in \Omega'$.
	Passing to the limit in 
	$\psi_k \circ \varphi_k (x) = x$,
	we obtain
	$\psi_0(y) = x \in \Omega$.
	Similar we obtain
	$\varphi_0(x) = y \in \Omega'$
	for 
	$x \in \Omega$.
\end{remark}

In order to make sure that 
$\varphi_0 \in \Hom$
it remains to verify 
\begin{equation*}
 	K_O(\cdot, \varphi_0) \in L_{n-1}(\Omega) \quad \text{and} \quad 
 	\| K_I(\cdot,\varphi_0) \mid L_{s} (\Omega) \| \leq M.
\end{equation*}
It follows from the semicontinuity property of distortion coefficient 
\cite{GehrIwa1999}, \cite[Theorem 8.10.1]{IwaMar2001}
(see this property under weaker assumption and some generalization in \cite{VodMol2016,VodKudr2017}).

In order to complete the proof,
it remains to verify lower semicontinuity of the functional
\begin{equation*}
	\int\limits_\Omega W(x, D\varphi_0)\,dx\leq \varliminf\limits_{k\rightarrow\infty} 
	\int\limits_\Omega W(x,D\varphi_k)\,dx,
\end{equation*}
using conventional technique for polyconvex case
(see, for example, \cite[\S 5]{Mul1990}).

\section{Examples}\label{sec:examples}

As our first example
consider an~Ogden material
with the stored-energy function
$W_1$
of the form
\begin{equation}\label{example1}
	W_1(F)=a\tr(F^T F)^{\frac{p}{2}} + b \tr\Adj (F^T F)^{\frac{q}{2}} 
	+ c(\det F)^r + d (\det F)^{-m},
\end{equation}
where
$a > 0$,
$b > 0$,
$c > 0$,
$d > 0$,
$p > 3$,
$q > 3$,
$r > 1$,
and
$m > \frac{2q}{q-3}$.
Then
$W_1(F)$
is polyconvex
and the coercivity inequality holds
\cite[Theorem 4.9-2]{Ciar1988}:
\begin{equation*}
	W_1(F)\geq \alpha\big(|F|^p+|\Adj F|^q\big)+ c(\det F)^r + d (\det F)^{-m}.
\end{equation*}
We have to solve the~minimization problem 
\begin{equation}\label{problem:example1}
	I_1(\varphi_B)=\inf \{I_1(\varphi) : \varphi\in\A_B\},
\end{equation}
where
$I_1(\varphi) = \int\limits_{\Omega} W_1 (D\varphi (x)) \, dx$
and the class of admissible deformations
$\A_B = \{\varphi\in W^1_1(\Omega), \: I_1(\varphi) < \infty, \:
	J(x,\varphi) > 0 
	\text{ a.e.~in } \Omega, 
	\varphi|_{\partial \Omega}=\overline{\varphi}|_{\partial \Omega}
	\text{ a.e.~on } \partial\Omega\}$
is defined by~\eqref{def:AB}
for a homeomorphic boundary conditions 
$\overline{\varphi}\colon \overline\Omega \to \overline{\Omega'}$,
$\overline{\varphi}\in W^1_p(\Omega)$,
$J(x, \overline \varphi) > 0$
a.e.\ in
$\Omega$
and
$I_1(\overline\varphi) < \infty$.
The result of John Ball~\cite{Ball1981} ensures that
there exists at least one solution
$\varphi_B\in\A_B$
to this problem,
which is a~homeomorphism in addition.

Denote 
$\inf\limits_{\varphi\in\A_B} I_1(\varphi) + m = M$
for any 
$m>0$
and consider a class, defined by~\eqref{admissible_classes},
\begin{multline*}
	\A(s,M;\overline\varphi)=\{
	\varphi \colon \Omega \to \Omega' \text{ is a homeomorphism with finite distortion, } 
	\\
	\varphi \in W^1_1(\Omega),\:
	I_1(\varphi) < \infty, \:
	\: J(x,\varphi) \geq 0 
	\text{ a.e.~in } \Omega, \: 
	\\
	\| K_I(\cdot,\varphi) \mid L_{s} (\Omega)\| \leq M, \: 
	\varphi|_{\partial \Omega}=\overline{\varphi}|_{\partial \Omega}
	\text{ a.e.~on } {\partial \Omega}\}.
\end{multline*}

It is easy to check that 
$\varphi \in \A_B$
is a homeomorphism (by \cite[Theorem 2]{Ball1981}),
has finite distortion (as 
$J(x,\varphi) \geq 0$ 
a.e.)
and 
$\| K_I(\cdot,\varphi) \mid L_{s} (\Omega)\| \leq M$
by H\"older inequality
for
$s=\frac{\sigma r}{rn + \sigma - n} > 1$
where
$\sigma = \frac{q(1+m)}{q+m} > n$.
It means that 
$\A_B \cap \A(s,M;\overline\varphi) \neq \emptyset$.
Moreover,
a minimizing sequence
$\{\varphi_k\}\subset \A_B$
of the problem
\eqref{problem:example1}
belongs to 
$\A(s,M;\overline\varphi)$
as well.

On the other hand,
for the functions of the form~\eqref{example1}
Theorem~\ref{thm:main} holds.
Indeed,
$W_1(F)$
is polyconvex and satisfies 
\begin{equation*}
	W_1(F)\geq \alpha |F|^3 - \alpha,
\end{equation*}
where~%
$\alpha$
plays the role of the function
$h(x)$
of~\eqref{neq:coer}.
When we consider the~same boundary conditions 
$\overline{\varphi}\colon \overline{\Omega} \to \overline{\Omega'}$
and solve the minimization problem 
\begin{equation*}
	I_1(\varphi_0)=\inf \{I_1(\varphi) : \varphi\in\A(s,M;\overline\varphi)\}
\end{equation*}
Lemma~\ref{cor:ex_thm_dd} and Remark~\ref{rem:free_outer_dist} 
yields a~solution
$\varphi_0\in\A(s,M;\overline\varphi)$
which is a~homeomorphism.

Let us discuss another example.
Here the stored-energy function is of the~form
\begin{equation*}
	W_2(F)=a \, {\tr} (F^T F)^{\frac{3}{2}}.
\end{equation*}
This function is polyconvex and satisfies 
\begin{equation*}
	W_2(F)\geq \alpha \|F\|^3,
\end{equation*}
but violates the
inequality of the form~\eqref{neq:coer_b}.
Moreover,
$W_2(F)$
violates the~asymptotic condition
\begin{equation*}
	W_2(x,F)\to\infty
	\text{ as } \det F \to 0_+,
\end{equation*}
which plays an~important role in~\cite{Ball1981, BallCurOl1981}
and other articles.

Nevertheless,
for the stored-energy function
$W_2$
there exists a~solution to the minimization problem 
$	
I_2(\varphi_0)=\inf\limits I_2(\varphi)
$
in the class of homeomorhisms 
$\varphi\in\Hom(n-1,s,M)$,
$s>1$,
where
$I_2 (\varphi) = \int\limits_{\Omega} W_2 (D\varphi (x)) \, dx$.

\appendix

\section{Appendix, Geometry of domains}\label{sec:geometry}

It is known that the concept of a domain ``with Lipschitz boundary'' 
and a ``domain with quasi-isometric boundary'' are used in different senses.
To avoid ambiguity, we present in this section precise definitions
of such domains, used in the work, and their equivalence.

It is evident that 
the bi-Lipschitz mapping	 
is also a quasi-isometric one.
The inverse implication is not valid but 
the following assertion is true:
\textit{every quasi-isometric
mapping is locally bi-Lipschitz one} 
(see Lemma~\ref{lem:quasi-iso} below). 
Hence  
$\Omega$
is a domain 
with Lipschitz boundary (Definition~\ref{def:Lip_boundary})
if and only if it is a domain 
with quasi-isometric boundary (Definition~\ref{def:qi_boundary}).
Note that if the constant 
$M$
in Definition~\ref{def:quasiisom}
is allowed to depend on
$x$
and
$z$,
then a domain with quasi-isometric boundary may not have Lipschitz property nor cone property (see \cite[\S1.1.9]{Maz2011}).

\begin{definition}\label{def:quasiisom}
	A homeomorphism  
	$\varphi\colon U \to U'$ 
	of two open sets 
	$U$, 
	$U'\subset\R^n$ 
	is called a{\it~quasi-isometric mapping} 
	if the following inequalities 
	\begin{equation*}
		\varlimsup\limits_{y\to x}\frac{|\varphi(y)-\varphi(x)|}{|y-x|}\leq M \quad \text{ and } \quad 
		\varlimsup\limits_{y\to z}\frac{|\varphi^{-1}(y)-\varphi^{-1}(z)|}{|y-z|}\leq M
	\end{equation*}
	hold for all 
	$x\in U$ 
	and 
	$z\in U'$ 
	where  
	$M$ 
	is some constant independent of the~choice of points 
	$x\in U$ 
	and 
	$z\in U'$.
\end{definition}

\begin{definition}
	A mapping  
	$\varphi\colon U \to U'$ 
	of two open sets 
	$U$, 
	$U'\subset\R^n$ 
	is a{\it~bi-Lipschitz mapping} 
	if the following inequality 
	\begin{equation*}
		l |y-x| \leq |\varphi(y)-\varphi(x)| \leq L |y-x|
	\end{equation*}
	holds for all 
	$x$,
	$y\in U$  
	where  
	$l$
	and
	$L$ 
	are some constants independent of the~choice of points 
	$x$,
	$y\in U$.
\end{definition}

\begin{definition}\label{def:qi_boundary}
	A~domain
	$\Omega\subset\R^n$
	is called a~domain with {\it quasi-isometric  boundary}
	whenever for every point
	$x\in\partial \Omega$
	there are a~neighborhood
	$U_x \subset\R^n$
	and a~quasi-isometric mapping
	$\nu_x\colon U_x \to B(0, r_x) \subset \R^n$,
	where the~number
	$r_x>0$
	depends on~%
	$U_x$,
	such that
	$\nu_x (U_x \cap \partial \Omega) \subset \{y \in B(0, r_x) \mid y_n = 0\}$
	and
	$\nu_x (U_x \cap \Omega) \subset \{y \in B(0, r_x) \mid y_n > 0\}$.
\end{definition}

Let  
$d_{E}(u,v)$ 
denote the intrinsic metric in the domain 
$E$ 
defined as the infimum over the lengths of all rectifiable curves in 
$E$ 
with endpoints 
$u$ 
and~%
$v$.
It is well-known that a mapping is quasi-isometric if and only if the lengths of a rectifiable curve 
in the domain and of its image are comparable. 
The last property means the following one:  
\textit{given mapping 
$\varphi\colon \Omega \to \Omega'$ 
is quasi-isometric if and only if  
$ L^{-1}  d_B(x,y)\leq d_{\varphi(B)}(\varphi(x),\varphi(y))\leq L d_B(x,y)$ 
for all 
$x$,
$y\in B$.}

\begin{lemma}\label{lem:quasi-iso}
	Let 
	$\varphi\colon \Omega \to \Omega'$
	be a quasi-isometric
	mapping 
	then 
	for any fixed  ball 
	$B\Subset \Omega$ 
	the inequality
	$$
		d_{\varphi(B)}(\varphi(x),\varphi(y))\leq L|\varphi(x)-\varphi(y)|	
	$$ 
	holds for all points 
	$x$,
	$y\in B$ 
	with some constant 
	$L$ 
	depending on the choice of 
	$B$ 
	only.
\end{lemma}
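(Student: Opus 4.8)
The plan is to reduce the intrinsic estimate to a pair of \emph{Euclidean} bi-Lipschitz bounds for $\varphi$ on the compact set $\overline{B}\Subset\Omega$, and then to exploit the convexity of the ball $B$ to produce a short connecting curve. By Definition~\ref{def:quasiisom} the quasi-isometry constant $M$ is the same at every point, so the pointwise upper Lipschitz ratios of $\varphi$ on $\Omega$ and of $\varphi^{-1}$ on $\Omega'$ are bounded by $M$ everywhere. First I would record the elementary consequence that a continuous map whose pointwise Lipschitz constant is at most $M$ at each point is globally $M$-Lipschitz on every convex subset, by integrating the ratio bound along the straight segment joining two points. Applied to the ball $B$ this gives $|\varphi(x)-\varphi(y)|\le M|x-y|$ for all $x,y\in B$, and the analogous statement holds for $\varphi^{-1}$ on any ball contained in $\Omega'$.

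The main obstacle is the reverse (co-Lipschitz) inequality $|\varphi(x)-\varphi(y)|\ge \ell\,|x-y|$ on $\overline{B}$, since $\varphi(B)$ need not be convex and one cannot simply integrate $\varphi^{-1}$ along Euclidean segments inside $\varphi(B)$. I would establish the uniform lower bound by a compactness argument: assuming it fails, choose $x_k,y_k\in\overline{B}$ with $|\varphi(x_k)-\varphi(y_k)|/|x_k-y_k|\to 0$ and pass to convergent subsequences $x_k\to x_*$, $y_k\to y_*$ in the compact set $\overline{B}$. If $x_*\ne y_*$, continuity and injectivity of the homeomorphism $\varphi$ force the ratio to tend to the positive number $|\varphi(x_*)-\varphi(y_*)|/|x_*-y_*|$, a contradiction. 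If $x_*=y_*=p\in\overline{B}\subset\Omega$, I would pick a small ball $V=B(\varphi(p),\rho)\subset\Omega'$ on which $\varphi^{-1}$ is $M$-Lipschitz by the previous step; since $\varphi(x_k),\varphi(y_k)\to\varphi(p)$ lie in $V$ for large $k$, the bound $|x_k-y_k|\le M|\varphi(x_k)-\varphi(y_k)|$ again contradicts the ratio tending to $0$. This yields a constant $\ell>0$ depending only on $B$.

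With both Euclidean bounds in hand, the conclusion follows quickly. For $x,y\in B$ the straight segment $\sigma(t)=(1-t)x+ty$ lies in $B$ by convexity, so its image $\varphi\circ\sigma$ is a curve in $\varphi(B)$ joining $\varphi(x)$ to $\varphi(y)$; by the $M$-Lipschitz bound of the first step its length is at most $M|x-y|$, whence $d_{\varphi(B)}(\varphi(x),\varphi(y))\le M|x-y|$. Finally the co-Lipschitz bound gives $|x-y|\le \ell^{-1}|\varphi(x)-\varphi(y)|$, and combining the two produces $d_{\varphi(B)}(\varphi(x),\varphi(y))\le (M/\ell)\,|\varphi(x)-\varphi(y)|$, i.e.\ the claim with $L=M/\ell$ depending on $B$ only. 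As a by-product, the same two bounds, read together with the intrinsic-metric characterization recalled above, recover the local bi-Lipschitz property of quasi-isometric mappings announced before Definition~\ref{def:quasiisom}.
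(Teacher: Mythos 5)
Your proof is correct, but it takes a genuinely different route from the paper's. The paper argues through Sobolev extension theory: given $g\in W^1_\infty(\varphi(B))$, it pulls $g$ back to $W^1_\infty(B)$, applies a Whitney-type bounded extension operator $\operatorname{ext}_B\colon W^1_\infty(B)\to W^1_\infty(\R^n)$, multiplies by a cut-off, pushes the result forward by $\varphi^{-1}{}^*$, and thereby manufactures a bounded extension operator $W^1_\infty(\varphi(B))\to W^1_\infty(\R^n)$; it then invokes the characterization from \cite{Vod1988,Vod1989} that such an operator exists exactly when the intrinsic metric of $\varphi(B)$ is dominated by the Euclidean one, which is the asserted inequality. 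Your argument bypasses all of this machinery: (i) integrating the pointwise ratio bound along straight segments gives $|\varphi(x)-\varphi(y)|\le M|x-y|$ on the convex set $B$, and likewise $\varphi^{-1}$ is $M$-Lipschitz on any ball contained in $\Omega'$; (ii) the compactness argument on $\overline{B}$ (with the two cases $x_*\ne y_*$, handled by injectivity and continuity, and $x_*=y_*=p$, handled by the $M$-Lipschitz bound for $\varphi^{-1}$ on a small ball around $\varphi(p)$) produces a uniform co-Lipschitz constant $\ell>0$ depending on $B$; (iii) the image of the segment $[x,y]$ is a rectifiable curve in $\varphi(B)$ of length at most $M|x-y|$, so $d_{\varphi(B)}(\varphi(x),\varphi(y))\le M|x-y|\le (M/\ell)\,|\varphi(x)-\varphi(y)|$. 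All three steps are sound, and the constant $L=M/\ell$ indeed depends only on $B$ (and $\varphi$). What each approach buys: yours is elementary and self-contained, needs neither the Whitney extension theorem nor the nontrivial cited equivalence between $W^1_\infty$-extension domains and the intrinsic-metric condition, and it proves along the way the claim, stated before Definition~\ref{def:quasiisom}, that quasi-isometric mappings are locally bi-Lipschitz; the paper's proof, by contrast, stays within the composition-operator ideology that drives the whole article and yields a structurally stronger by-product, namely that $\varphi(B)$ admits a bounded $W^1_\infty$-extension operator, from which the metric inequality is read off. The one feature both proofs share is non-constructiveness of $L$: yours through the compactness contradiction, the paper's through the norm of the extension operator.
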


\begin{proof}[Proof of Lemma~\ref{lem:quasi-iso}]
	Take an arbitrary function 
	$g\in W^1_\infty(\varphi(B))$. 
	Then 
	$\varphi^*(g) = g\circ \varphi\in W^1_\infty(B)$ 
	and, by the Whitney type extension theorem 
	(see for instance \cite{Vod1988,Vod1989}), 
	there is a bounded  extension operator 
	$\operatorname{ext}_B\colon W^1_\infty(B)\to W^1_\infty(\R^n)$. 
	Multiply 
	$\operatorname{ext}_B(\varphi^*(g))$ 
	by a cut-off-function 
	$\eta\in C_0^\infty(\Omega)$ 
	such that
	$\eta(x) =1$ 
	for all points 
	$x\in B$. 
	Then the product 
	$\eta\cdot\operatorname{ext}_B(\varphi^*(g))$ 
	belongs to 
	$W^1_\infty(\Omega)$, 
	equals~0 near the boundary 
	$\partial \Omega$ 
	and its norm in 
	$W^1_\infty(\Omega)$ 
	is controlled by the norm 
	$\|g\mid W^1_\infty(\varphi(B))\|$. 
	  
	It is clear that 
	$\varphi^{-1}{}^*(\eta\cdot\operatorname{ext}_B(\varphi^*(g)))$ 
	belongs to 
	$W^1_\infty(\Omega')$, 
	equals~0 near the boundary 
	$\partial \Omega'$ 
	and its norm in 
	$W^1_\infty(\Omega')$ 
	is controlled by the norm 
	$\|g\mid W^1_\infty(\varphi(B))\|$.  
	Extending 
	$\varphi^{-1}{}^*(\eta\cdot\operatorname{ext}_B(\varphi^*(g)))$ 
	by~0 outside~%
	$\Omega'$ 
	we obtain a~bounded extension operator
	$$
		\operatorname{ext}_{\varphi(B)}\colon W^1_\infty(\varphi(B))\to 
		W^1_\infty(\R^n).
	$$
	It is well-known (see for example \cite{Vod1988,Vod1989}) 
	that a necessary and sufficient condition for the existence of such 
	an operator is an~equivalence of the interior metric in  
	$\varphi(B)$ 
	to the Euclidean one:
	the inequality  
	$$
		d_{\varphi(B)}(u,v)\leq L |u-v|
	$$
	holds for all points 
	$u$,
	$v\in \varphi(B)$ 
	with some constant 
	$L$.
\end{proof}



\bibliographystyle{plain}
\bibliography{biblio_aei}

\begin{thebibliography}{10}

\bibitem{Adams1975}
R.~A. Adams.
\newblock {\em Sobolev {Spaces}}.
\newblock Academic Press, New York, 1975.

\bibitem{AstIwaMarOnn2005}
K.~Astala, T.~Iwaniec, G.~J. Martin, and J.~Onninen.
\newblock Extremal mappings of finite distortion.
\newblock {\em Proc. Lond. Math. Soc. (3)}, 91(3):655--702, 2005.

\bibitem{Ball1977}
J.~M. Ball.
\newblock Convexity conditions and existence theorems in nonlinear elasticity.
\newblock {\em Arch. Ration. Mech. Anal.}, 63:337--403, 1977.

\bibitem{Ball1981}
J.~M. Ball.
\newblock Global invertibility of {Sobolev} functions and the interpretation of
  matter.
\newblock {\em Proc. Roy. Soc. Edinburgh Sect. A}, 88:315--328, 1981.

\bibitem{Ball2002}
J.~M. Ball.
\newblock Some open problems in elasticity.
\newblock In P.~Newton et~al., editor, {\em Geometry, Mechanics, and Dynamics},
  pages 3--59. Springer, New York, 2002.

\bibitem{Ball2010}
J.~M. Ball.
\newblock Progress and puzzles in nonlinear elasticity, poly-, quasi- and
  rank-one convexity in applied mechanics.
\newblock {\em CISM International Centre for Mechanical Sciences}, 516:1--15,
  2010.

\bibitem{BallCurOl1981}
J.~M. Ball, J.~C. Currie, and P.~J. Olver.
\newblock Null {Lagrangians}, weak continuity, and variational problems of
  arbitrary order.
\newblock {\em J. Funct. Anal.}, 41:135--174, 1981.

\bibitem{BarHenMor2017}
M.~Barchiesi, D.~Henao, and C.~Mora-Corral.
\newblock Local invertibility in {Sobolev} spaces with applications to nematic
  elastomers and magnetoelasticity.
\newblock {\em Arch. Ration. Mech. Anal.}, 224:743--816, 2017.

\bibitem{BauPhi1994}
P.~Bauman and D.~Phillips.
\newblock Univalent minimizers of polyconvex functionals in 2 dimensions.
\newblock {\em Arch. Ration. Mech. Anal.}, 126:161--181, 1994.

\bibitem{BayVod2015}
A.~N. Baykin and S.~K. Vodop$'$yanov.
\newblock Capacity estimates, {Liouville's} theorem, and singularity removal
  for mappings with bounded $(p,q)$-distortion.
\newblock {\em Sib. Math. J.}, 56(2):237--261, 2015.

\bibitem{BenKam2015}
B.~Bene\v{s}ov{\'a} and M.~Kampschulte.
\newblock Gradient {Young} measures generated by quasiconformal maps the plane.
\newblock {\em SIAM J. Math. Anal.}, 47:4404--4435, 2015.

\bibitem{BenKru2016}
B.~Bene\v{s}ov{\'a} and M.~Kru\v{z}{\'i}k.
\newblock Characterization of gradient {Young} measures generated by
  homeomorphisms in the plane.
\newblock {\em ESAIM Control Optim. Calc. Var.}, 22:267--288, 2016.

\bibitem{Ciar1988}
P.~G. Ciarlet.
\newblock {\em Mathematical Elasticity, Vol. I : Three-Dimensional Elasticity,
  Series ``Studies in Mathematics and its Applications''}.
\newblock North-Holland, Amsterdam, 1988.

\bibitem{CiarNec1985}
P.~G. Ciarlet and J.~Ne\v{c}as.
\newblock Unilateral problems in nonlinear three-dimensional elasticity.
\newblock {\em Arch. Ration. Mech. Anal.}, 87(4):319--338, 1985.

\bibitem{CiarNec1987}
P.~G. Ciarlet and J.~Ne\v{c}as.
\newblock Injectivity and self-contact in nonlinear elasticity.
\newblock {\em Arch. Ration. Mech. Anal.}, 97(3):171--188, 1987.

\bibitem{ConLel2003}
S.~Conti and C.~De~Lellis.
\newblock Some remarks on the theory of elasticity for compressible
  {Neohookean} materials.
\newblock {\em Ann. Sc. Norm. Super. Pisa Cl. Sci. (5)}, 2:521--549, 2003.

\bibitem{Fed1969}
H.~Federer.
\newblock {\em Geometric measure theory}.
\newblock Springer, Berlin, 1969.

\bibitem{FonGan1995}
I.~Fonseca and W.~Gangbo.
\newblock Local invertibility of {Sobolev} functions.
\newblock {\em SIAM J. Math. Anal.}, 26:280--304, 1995.

\bibitem{GehrIwa1999}
F.~W. Gehring and T.~Iwaniec.
\newblock The limit of mappings with finite distortion.
\newblock {\em Ann. Acad. Sci. Fenn. Math.}, 24:253--264, 1999.

\bibitem{Gus1978}
M.~Guzman.
\newblock {\em Differentiation of Integrals in $\mathbb{R}^n$}.
\newblock Lecture Notes in Mathematics, Vol. 481. Springer-Verlag, 1975.

\bibitem{Haj1993}
P.~Haj\l{}asz.
\newblock Change of variables formula under minimal assumptions.
\newblock {\em Colloq. Math.}, 64(1):93--101, 1993.

\bibitem{HeiKos1993}
J.~Heinonen and P.~Koskela.
\newblock {Sobolev} mappings with integrable dilatation.
\newblock {\em Arch. Ration. Mech. Anal.}, 125(1):81--97, 1993.

\bibitem{HenMor2010}
D.~Henao and C.~Mora-Corral.
\newblock Invertibility and weak continuity of the determinant for the
  modelling of cavitation and fracture in nonlinear elasticity.
\newblock {\em Arch. Ration. Mech. Anal.}, 197:619--655, 2010.

\bibitem{HenMor2015}
D.~Henao and C.~Mora-Corral.
\newblock Regularity of inverses of {Sobolev} deformations with finite surface
  energy.
\newblock {\em J. Funct. Anal.}, 268:2356--2378, 2015.

\bibitem{HenKos2005}
S.~Hencl and P.~Koskela.
\newblock Mappings of finite distortion: discreteness and openness for
  quasilight mappings.
\newblock {\em Ann. Inst. H. Poincar\'{e} Anal. Non Lin\'{e}aire},
  22(3):331--342, 2005.

\bibitem{HenKos2006}
S.~Hencl and P.~Koskela.
\newblock Regularity of the inverse of a planar {S}obolev homeomorphism.
\newblock {\em Arch. Ration. Mech. Anal.}, 180(1):75--95, 2006.

\bibitem{HenKos2014}
S.~Hencl and P.~Koskela.
\newblock {\em Lectures on mappings of finite distortion}.
\newblock Lecture Notes in Mathematics, Vol. 2096. Springer International
  Publishing, 2014.

\bibitem{HenMal2002}
S.~Hencl and J.~Mal\'{y}.
\newblock Mappings of finite distortion: {H}ausdorff measure of zero sets.
\newblock {\em Math. Ann.}, 324(3):451--464, 2002.

\bibitem{IwaMar2001}
T.~Iwaniec and G.~Martin.
\newblock {\em Geometric function theory and non-linear analysis}.
\newblock Oxford Mathematical Monographs, Clarendon Press, Oxford, 2001.

\bibitem{IwaOnn2009}
T.~Iwaniec and J.~Onninen.
\newblock Hyperelastic deformations of smallest total energy.
\newblock {\em Arch. Ration. Mech. Anal.}, 194(3):927--986, 2009.

\bibitem{IwaOnn2010}
T.~Iwaniec and J.~Onninen.
\newblock Deformations of finite conformal energy: Existence, and removability
  of singularities.
\newblock {\em Proc. Lond. Math. Soc. (3)}, 100(1):1--23, 2010.

\bibitem{IwaOnn2011}
T.~Iwaniec and J.~Onninen.
\newblock Deformations of finite conformal energy: boundary behavior and limits
  theorems.
\newblock {\em Trans. Amer. Math. Soc.}, 363(11):5605--5648, 2011.

\bibitem{IwaOnn2012}
T.~Iwaniec and J.~Onninen.
\newblock $n$-harmonic mappings between annuli.
\newblock {\em Mem. Amer. Math. Soc.}, 218, 2012.

\bibitem{IwaSve1993}
T.~Iwaniec and V.~{\v S}ver{\' a}k.
\newblock On mappings with integrable dilatation.
\newblock {\em Proc. Amer. Math. Soc.}, 118:185--188, 1993.

\bibitem{KosMal2003}
P.~Koskela and J.~Mal{\' y}.
\newblock Mappings of finite distortion: The zero set of the {J}acobian.
\newblock {\em J. Eur. Math. Soc.}, 5:95--105, 2003.

\bibitem{ManVill1998}
J.~Manfredi and E.~Villamor.
\newblock An extension of {R}eshetnyak's theorem.
\newblock {\em Indiana Univ. Math. J.}, 47(3):1131--1145, 1998.

\bibitem{MarMal1995}
O.~Martio and J.~Mal\'{y}.
\newblock {L}usin's condition {(N)} and mappings of the class {$W^1_n$}.
\newblock {\em J. Reine Angew. Math.}, 485:19--36, 1995.

\bibitem{Maz2011}
V.~Maz'ya.
\newblock {\em {S}obolev spaces: with applications to elliptic partial
  differential equations}, volume 342 of {\em Grund. der math. Wiss.}
\newblock Springer-Verlag, Berlin Heidelberg, 2011.

\bibitem{Mor1966}
C.~B. Morrey.
\newblock {\em Multiple Integrals in the Calculus of Variations}.
\newblock Springer-Verlag, Berlin, 1966.

\bibitem{Mos1968}
G.~D. Mostow.
\newblock Quasi-conformal mappings in $n$-space and the rigidity of the
  hyperbolic space forms.
\newblock {\em Publ. Math. Inst. Hautes \'{E}tudes Sci.}, 34:53--104, 1968.

\bibitem{Mul1990}
S.~M\"uller.
\newblock Higher integrability of determinants and weak convergence in {$L^1$}.
\newblock {\em J. Reine Angew. Math.}, 412:20--34, 1990.

\bibitem{MulSpe1995}
S.~M\"uller and S.~Spector.
\newblock An existence theory for nonlinear elasticity that allows for
  cavitation.
\newblock {\em Arch. Ration. Mech. Anal.}, 131(1):1--66, 1995.

\bibitem{MulSpeTan1996}
S.~M\"uller, S.~Spector, and Q.~Tang.
\newblock Invertibility and a topological property of {S}obolev maps.
\newblock {\em SIAM J. Math. Anal.}, 27:959--976, 1996.

\bibitem{Ogd1972}
R.~W. Ogden.
\newblock Large deformation isotropic elasticity: on the correlation of theory
  and experiment for compressible rubber-like solids.
\newblock {\em Proc. Roy. Soc. London A}, 328:567--583, 1972.

\bibitem{Onn2006}
J.~Onninen.
\newblock Regularity of the inverse of spatial mappings with finite distortion.
\newblock {\em Calc. Var. Partial Differential Equations}, 26(3):331--341,
  2006.

\bibitem{Raj2010}
K.~Rajala.
\newblock Remarks on the {I}waniec--\v{S}ver\'{a}k conjecture.
\newblock {\em Indiana Univ. Math. J.}, 59(6):2027--2039, 2010.

\bibitem{Resh1967}
Yu.~G Reshetnyak.
\newblock On the stability of conformal mappings in multidimensional spaces.
\newblock {\em Sib. Math. J.}, 8(1):69--85, 1967.

\bibitem{Resh1967-2}
Yu.~G Reshetnyak.
\newblock Space mappings with bounded distortion.
\newblock {\em Sib. Math. J.}, 8(3):466--487, 1967.

\bibitem{Resh1982}
Yu.~G. Reshetnyak.
\newblock {\em Space mappings with bounded distortion}.
\newblock Transl. Math. Monographs 73, AMS, New York, 1989.

\bibitem{Rick1993}
S.~Rickman.
\newblock {\em Quasiregular mappings}.
\newblock Springer-Verlag, Berlin, 1993.

\bibitem{Sve1988}
V.~{\v S}ver{\' a}k.
\newblock Regularity properties of deformations with finite energy.
\newblock {\em Arch. Ration. Mech. Anal.}, 100(2):105--127, 1988.

\bibitem{SwaZie2002}
D.~Swanson and W.~P. Ziemer.
\newblock A topological aspect of {S}obolev mappings.
\newblock {\em Calc. Var. Partial Differential Equations}, 14(1):69--84, 2002.

\bibitem{SwaZie2004}
D.~Swanson and W.~P. Ziemer.
\newblock The image of a weakly differentiable mapping.
\newblock {\em SIAM J. Math. Anal.}, 35(5):1099--1109, 2004.

\bibitem{Tan1988}
Q.~Tang.
\newblock Almost-everywhere injectivity in nonlinear elasticity.
\newblock {\em Proc. Roy. Soc. Edinburgh Sect. A}, 109(1--2):79--95, 1988.

\bibitem{Ukh1993}
A.~D.-O. Ukhlov.
\newblock On mappings generating the embeddings of {S}obolev spaces.
\newblock {\em Sib. Math. J.}, 34(1):185--192, 1993.

\bibitem{Vod}
S.~K. Vodop$'$yanov.
\newblock Foundations of quasiconformal analysis of two-index scale of spatial
  mappings.
\newblock {\em Izv. Math., submitted}.

\bibitem{Vod1988}
S.~K. Vodop$'$yanov.
\newblock Equivalent normalizations of {S}obolev and {N}ikol'skii spaces in
  domains. boundary values and extension.
\newblock In {\em Function Spaces and Applications. (Proceedings of the US~--
  Swedish Seminar, Lund, Sweden, June 1986)}, volume 1302 of {\em Lecture notes
  in Mathematics}, pages 397--409. Springer-Verlag, Berlin a.o., 1988.

\bibitem{Vod1989}
S.~K. Vodop$'$yanov.
\newblock Intrinsic geometries and boundary values of differentiable functions.
  {I}.
\newblock {\em Sib. Math. J.}, 30(2):191--202, 1989.

\bibitem{Vod2010}
S.~K. Vodop$'$yanov.
\newblock Spaces of differential forms and maps with controlled distortion.
\newblock {\em Izv. Math.}, 74(4):5--32, 2010.

\bibitem{Vod2012}
S.~K. Vodop$'$yanov.
\newblock Regularity of mappings inverse to {S}obolev mappings.
\newblock {\em Mat. Sb.}, 203(10):1383--1410, 2012.

\bibitem{VodGold1976}
S.~K. Vodop$'$yanov and V.~M. Gol$'$dshtein.
\newblock Quasiconformal mappings and spaces of functions with generalized
  first derivatives.
\newblock {\em Sib. Math. J.}, 17(3):399--411, 1976.

\bibitem{VodKudr2017}
S.~K. Vodop$'$yanov and N.~A. Kudryavtseva.
\newblock On the convergence of mappings with {$k$}-finite distortion.
\newblock {\em Math. Notes}, 102(6):878--883, 2017.

\bibitem{VodMol2015}
S.~K. Vodop$'$yanov and A.~O. Molchanova.
\newblock Variational problems of the nonlinear elasticity theory in certain
  classes of mappings with finite distortion.
\newblock {\em Dokl. Math.}, 92(3):739--742, 2015.

\bibitem{VodMol2016}
S.~K. Vodop$'$yanov and A.~O. Molchanova.
\newblock Lower semicontinuity of distortion coefficient of mappings with
  bounded {$(\theta,1)$}-weighted {$(p,q)$}-distortion.
\newblock {\em Sib. Math. J.}, 57(5):999--1011, 2016.

\bibitem{VodUhl1998}
S.~K. Vodop$'$yanov and A.~D.-O. Ukhlov.
\newblock {S}obolev spaces and {$(P,Q)$}-quasiconformal mappings of {C}arnot
  groups.
\newblock {\em Sib. Math. J.}, 39(4):665--682, 1998.

\bibitem{VodUhl2002}
S.~K. Vodop$'$yanov and A.~D.-O. Ukhlov.
\newblock Superposition operators in {S}obolev spaces.
\newblock {\em Russian Math. (Iz. VUZ)}, 46(10):9--31, 2002.

\bibitem{Whit1951}
H.~Whitney.
\newblock On totally differentiable and smooth functions.
\newblock {\em Pacific J. Math.}, 5(1):143--159, 1951.

\end{thebibliography}

\end{document}